\newtheorem{theorem}{Theorem}[section]
\newtheorem{lemma}[theorem]{Lemma}
\theoremstyle{definition}
\newtheorem{definition}[theorem]{Definition}
\newtheorem{corollary}[theorem]{Corollary}
\theoremstyle{remark}
\newtheorem{remark}[theorem]{Remark}
\numberwithin{equation}{section}
\newcommand{\supp}[1]{{\rm supp\/}(#1)}
\begin{document}

\title{Asymptotic of Cauchy biorthogonal polynomials}

\author{U. Fidalgo}
\address{Department of Mathematics, Applied Mathematics, and Statistics, Case Western Reserve University, Cleveland, Ohio 43403, U.S.A.}
\email{uxf6@case.edu}
\thanks{The first author was supported in part by the research grant MTM2015-65888-C4-2-P of Ministerio de Econom\'{\i}a, Industria y Competitividad, Spain.}

\author{G. L\'opez Lagomasino}
\address{Department of Mathematics, Universidad Carlos III de Madrid, Avenida de la Universidad, 30
CP-28911, Leganés, Madrid, Spain.}
\email{lago@math.uc3m.es}
\thanks{The second author was supported in part by the research grant MTM2015-65888-C4-2-P of Ministerio de Econom\'{\i}a, Industria y Competitividad, Spain.}

\author{S. Medina Peralta}
\address{Instituto de Matem\'atica y F\'isica. Universidad de Talca.  Camino Lircay S/N, Campus Norte,  Talca, Chile}
\email{smedinaperalta@gmail.com}
\thanks{The third author was supported by Conicyt Fondecyt/Postdoctorado/ Proyecto 3170112.}


\date{\today}



\begin{abstract} We consider sequences of biorthogonal polynomials with respect to a Cauchy type convolution kernel and give the weak and ratio asymptotic of the corresponding sequences of biorthogonal polynomials. The construction is intimately related with a mixed type Hermite-Pad\'e approximation problem whose asymptotic properties is also revealed.
\end{abstract}

\maketitle

\medskip

\noindent \textbf{Keywords:} biorthogonal polynomials, Hermite-Pad\'e approximation, weak asymptotic, ratio asymptotic

\medskip

\noindent \textbf{AMS classification:}  Primary: 30E10, 42C05; Secondary: 41A21

\section{Introduction}

Let $\Delta$ be a bounded subinterval of the real line. By $\mathcal{M}(\Delta)$ we denote the class of all finite positive Borel measures $\sigma$  whose support $\mbox{supp}(\sigma)$ has infinitely many points and  $\Delta$ is the smallest interval containing $\mbox{supp}(\sigma)$. Take $m \geq 2$ intervals $\Delta_j$, $j=1,\ldots,m$. Throughout the paper we will assume that
\begin{equation} \label{nonint}
\Delta_j \cap \Delta_{j+1}=\emptyset, \qquad j=1,\ldots, m-1,
\end{equation}
and  $(\sigma_1, \ldots, \sigma_m)$ is an ordered collection of measures such that $\sigma_j \in \mathcal{M}(\Delta_j), j=1,\ldots,m$.

\medskip

It is easy to see that for each $n \in \mathbb{Z}_+ := \{0,1,2,\ldots\}$ there exists a polynomial $Q_n, \deg Q_n \leq n,$ not identically equal to zero, such that
\begin{equation}\label{conddefanm}
\int_{\Delta_1}\cdots \int_{\Delta_m} x_1^\nu\frac{Q_n(x_m) \mbox{d} \sigma_m(x_m) \cdots \mbox{d} \sigma_1(x_1)}{(x_{m-1}-x_m)\cdots (x_1-x_2)}= 0,\qquad \nu=0,\ldots,n-1.
\end{equation}
Finding  $Q_n$ reduces to solving a system of $n$ homogeneous linear equations on the $n+1$ unknown coefficients of the polynomial which always has a non-trivial solution. It can be shown that any non-trivial solution has degree $n$. This entails that $Q_n$ is uniquely determined except for a constant factor. All the zeros of $Q_n$ are simple and lie in the interior of $\Delta_m$ (which we denote $\stackrel{\circ}{\Delta}_m $ and the interior is taken with respect to the Euclidean topology of $\mathbb{R}$). This and other properties of $Q_n$ will be proved below in Lemma \ref{l2}. In the sequel, we normalize $Q_n$ to be monic.

\medskip

The orthogonality relations may be expressed more compactly as follows. When $m=2$ we consider the usual Cauchy kernel $K(x_1,x_2)= (x_1-x_2)^{-1}$. For $m > 2$ we take
\[ K(x_1,x_m) = \int_{\Delta_2}\int_{\Delta_3}\cdots \int_{\Delta_{m-1} }
		\frac{\, d \sigma_{m-1}(x_{m-1})\cdots \, d\sigma_3(x_3)d\sigma_2(x_2)   }
		{(x_{m-1}-x_m)(x_{m-2}-x_{m-1})\cdots\,(x_2-x_3)(x_1-x_2)  }.
\]
With this notation, \eqref{conddefanm} adopts the form
\begin{equation}\label{conddefanm2}
\int_{\Delta_1\times \Delta_m} x_1^\nu K(x_1,x_m) Q_n(x_m) \,\mbox{d} \sigma_1(x_1) \,\mbox{d} \sigma_m(x_m)  = 0,\qquad \nu=0,\ldots,n-1.
\end{equation}
Taking into account what was said above, there exist two sequences of monic polynomials $(P_n), (Q_n), n \in \mathbb{Z}_+,$ such that for each $n$, $\deg(P_n) = \deg(Q_n) = n$ and
\begin{equation}\label{biort}
\int_{\Delta_1\times \Delta_m} P_k(x_1) K(x_1,x_m) Q_n(x_m) \,\mbox{d} \sigma_1(x_1) \,\mbox{d} \sigma_m(x_m)  = C_n \delta_{k,n},\quad C_n \neq 0.
\end{equation}
As usual, $\delta_{k,n} = 0, k \neq n, \delta_{n,n} = 1$.

\medskip

These two sequences of polynomials are said to be biorthogonal with respect to $(\sigma_1,\ldots,\sigma_m)$. Notice that the ordering of the measures is important in the definition of the kernel and thus in the definition of biorhogonality. In \cite{Bertola:CBOPs} the authors introduce the concept of biorthogonality with respect to a totally positive kernel. The kernels we have introduced do not fall in that category (except when $m=2$) and, therefore, we will follow a different approach.

\medskip

When $m=2$ biorthogonal polynomials appear in the analysis of the two matrix model \cite{BGS} and for finding discrete solutions of the Degasperis-Procesi equation \cite{Bertola:CBOPs} through a Hermite-Pad\'e approximation problem for two discrete measures. Motivated in \cite{Bertola:CBOPs}, the approximation problem was extended  in \cite{LMS} for arbitrary $m \geq 2$ and general measures proving the convergence of the method.

\medskip

In this paper, we study the asymptotic properties of the sequences of biorthogonal polynomials $(P_n), (Q_n), n \in \mathbb{Z}_+,$ depending on the analytic properties of the measures in $(\sigma_1,\ldots,\sigma_m)$. Before stating the corresponding results, we need to introduce some classes of measures and notation.

\medskip

It is said that $\sigma \in \mathcal{M}(\Delta)$ is regular, and we write $\sigma \in \mbox{\bf Reg}$, if
\[ \lim \gamma_n^{1/n} = \frac{1}{\mbox{cap}(\mbox{supp}(\sigma))},
\]
where $\mbox{cap}(\mbox{supp}(\sigma) )$ denotes the logarithmic capacity of $\mbox{supp}(\sigma)$ and $\gamma_n$ is the leading coefficient of the $n$-th orthonormal polynomial with respect to $\sigma$. See \cite[Theorems 3.1.1, 3.2.1]{stto} for different equivalent forms of defining regular measures and its basic properties. In connection with regular measures it is frequently convenient that the support of the measure be regular. A compact set $E$ is said to be regular when  Green's function corresponding to $\mathbb{C} \setminus E$ with singularity at $\infty$ can be extended continuously to $E$.

\medskip

A measure $\sigma \in \mathcal{M}(\Delta)$ is said to verify the Turan condition when $\sigma' >0 $ almost everywhere on $\Delta$  with respect to the Lebesgue measure. In this case, $\supp{\sigma} = \Delta$ and $\sigma \in \mbox{\bf Reg}$.

\subsection{Statement of the main results}

There are two forms of asymptotic results which play an important role in the general theory of orthogonal polynomials and their applications; namely, their weak asymptotic, connected with the asymptotic zero distribution of their zeros, and the ratio asymptotic (see, for example, \cite{gora}, \cite{kn:Nev1}, \cite{kn:Rak1}, \cite{kn:Rak2}, \cite{kn:Rak3}, \cite{ST}, and \cite{stto}). This interest extends to multiple orthogonal polynomials, which are related with biorthogonal polynomials (see \cite{AGR}, \cite{FLLS}, and \cite{GRS}). The two results we state below follow this line of research.

\medskip

Given a polynomial $Q, \deg(Q) = n$, we denote the associated normalized zero counting measure by
 \[ \mu_Q = \frac{1}{n} \sum_{Q(x) = 0} \delta_x,
 \]
 where $\delta_x$ denotes the Dirac measure with mass $1$ at the point $x$. Our first result says the folowing.

 \begin{theorem} \label{teo1} For each $k=1,\ldots,m$, assume that $\sigma_k \in \mbox{\bf Reg}$ and $\supp{\sigma_k}$ is regular. Then, there exist probability measures $\lambda_1, \lambda_m, \mbox{supp}(\lambda_1) \subset \Delta_1,   \mbox{supp}(\lambda_m) \subset \Delta_m, $ such that
 \begin{equation} \label{asin1}
  *\lim_n \mu_{P_n} = \lambda_1, \qquad *\lim_n \mu_{Q_n} = \lambda_m,
 \end{equation}
 where the convergence is in the weak star topology of measures.
 \end{theorem}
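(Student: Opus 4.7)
The plan is to realize $(P_n)$ and $(Q_n)$ as the outer components of a mixed-type Hermite-Pad\'e problem of Nikishin type and appeal to a vector equilibrium problem whose minimizer is unique thanks to \eqref{nonint}. As a preliminary, I would invoke the forthcoming Lemma \ref{l2} to locate the zeros: $\supp{\mu_{P_n}}\subset\stackrel{\circ}{\Delta}_1$ and $\supp{\mu_{Q_n}}\subset\stackrel{\circ}{\Delta}_m$. By Helly's selection theorem it then suffices to show that every weak-star accumulation point of these sequences equals a fixed probability measure, independent of the subsequence.

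I would next rewrite \eqref{conddefanm2} as a single-variable orthogonality on $\Delta_m$,
$$\int_{\Delta_m}Q_n(x_m)\,\phi_\nu(x_m)\,d\sigma_m(x_m)=0,\qquad \phi_\nu(x_m):=\int_{\Delta_1}x_1^\nu K(x_1,x_m)\,d\sigma_1(x_1),$$
and symmetrically for $P_n$ on $\Delta_1$. Since the kernel $K$ is an iterated Cauchy transform against $\sigma_2,\ldots,\sigma_{m-1}$ living on pairwise disjoint intervals by \eqref{nonint}, the Hermite-Pad\'e machinery of \cite{LMS} supplies auxiliary monic polynomials $Q_{n,j}$ of degree $n$, $j=2,\ldots,m-1$, such that the full vector $(P_n,Q_{n,2},\ldots,Q_{n,m-1},Q_n)$ satisfies a cascade of multiple orthogonality relations in standard Nikishin position.

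I would then apply the Gonchar-Rakhmanov-Stahl program. Regularity of $\sigma_k$ and of $\supp{\sigma_k}$ for each $k$ should yield matching $n$-th root upper and lower bounds for the biorthogonality constant $C_n$ and for the leading coefficients, forcing every weak-star accumulation point $(\lambda_1,\lambda_2,\ldots,\lambda_m)$ of the vector of normalized zero counting measures to be the unique minimizer of the vector energy
$$J(\mu_1,\ldots,\mu_m)=\sum_{j=1}^m I(\mu_j)-\sum_{j=1}^{m-1}I(\mu_j,\mu_{j+1})$$
over probability vectors with $\supp{\mu_j}\subset\Delta_j$. Condition \eqref{nonint} makes the associated tridiagonal interaction matrix positive definite, whence $J$ is strictly convex and the minimizer is unique; projecting on the first and last coordinates identifies $\lambda_1$ and $\lambda_m$, proving \eqref{asin1}.

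The principal obstacle will be the third step, specifically the matching $n$-th root estimates for $C_n^{1/n}$. The upper bound will come from comparing $Q_n$ (respectively $P_n$) with test polynomials whose zero counting measures approximate the equilibrium component on $\Delta_m$ (respectively $\Delta_1$), while the lower bound rests on Bernstein-Walsh inequalities together with the Tur\'an-type positivity guaranteed by the $\mbox{\bf Reg}$ hypothesis. A subsidiary delicate point is verifying that the signs in the iterated Cauchy kernel $K$ do preserve positivity on the disjoint intervals from \eqref{nonint}, so that the relevant interaction matrix is precisely the standard Nikishin one with no sign corrections; keeping careful bookkeeping of the $m-2$ intermediate polynomials throughout is then what ties the whole argument together.
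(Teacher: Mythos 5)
Your overall framework is correct and closely matches the paper's Section 3: reduce to the ML Hermite--Pad\'e cascade of Nikishin type, invoke the zero localization of Lemma \ref{l2}, pass to weak-star accumulation points via Helly, and identify every accumulation vector as the unique solution of the vector equilibrium problem for the tridiagonal matrix $\mathcal{C}_{\mathcal{N}}$ (Lemma \ref{niksor}). The paper does this inductively via Lemma \ref{gonchar-rakhmanov} applied to the varying-measure orthogonality relations \eqref{int3}--\eqref{int4}, rather than by direct $n$-th root estimates on $C_n$ with test polynomials and Bernstein--Walsh, but that is largely a matter of packaging.

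There is, however, a genuine structural error in your plan. You write the cascade as $(P_n, Q_{n,2},\ldots,Q_{n,m-1},Q_n)$, implicitly identifying $P_n$ with the first auxiliary polynomial of the Hermite--Pad\'e problem for $\mathcal{N}(\sigma_1,\ldots,\sigma_m)$. That identification is false: the first component of that cascade is $Q_{n,1}$, whose zeros lie in $\Delta_1$ but which in general differs from $P_n$. In the paper, $P_n$ arises from the \emph{reversed} Nikishin system $\mathcal{N}(\sigma_m,\ldots,\sigma_1)$, where one has its own cascade $(P_{n,1},\ldots,P_{n,m})$ with $P_n = P_{n,m}$. One must run the equilibrium-problem argument for both cascades; they share the same interaction matrix, so their vector equilibrium measures coincide (with the intervals taken in reverse order), and Corollary \ref{weakPs} then yields $*\lim_n \mu_{P_{n,m}} = \lambda_1$. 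As written, your argument would give the weak-star limit of $\mu_{Q_{n,1}}$, not of $\mu_{P_n}$; the two limiting measures are the same, but that equality is a consequence of the duality you need to set up, not a tautology, so the step needs to be filled in.
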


 The measures $\lambda_1,\lambda_m$ are the first and last components of the solution of an associated vector equilibrium problem.  This result is a consequence of Theorem \ref{teo4} (see also Corollary \ref{weakPs}) of Section 3. In Section 3, we will specify the vector equilibrium problem we must deal with and obtain the weak asymptotic of other polynomials and forms associated with this problem.

 \begin{theorem} \label{teo2} Assume that $\sigma_k' > 0$ a.e. on $\Delta_k, k=1,\ldots,m$. Then, there exist analytic functions  $\varphi_1 \in \mathcal{H}(\mathbb{C} \setminus \Delta_1), \varphi_m \in \mathcal{H}(\mathbb{C} \setminus \Delta_m)$ such that
 \begin{equation} \label{asin1}
  \lim_n \frac{P_{n+1}(z)}{P_n(z)} = \varphi_1(z), \qquad \lim_n  \frac{Q_{n+1}(z)}{Q_n(z)} = \varphi_m(z),
 \end{equation}
 uniformly on each compact subset of $\mathbb{C} \setminus \Delta_1$ and $\mathbb{C} \setminus \Delta_m$, respectively.
 \end{theorem}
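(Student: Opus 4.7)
The natural approach is to reduce Theorem \ref{teo2} to known ratio asymptotic results for multiple orthogonal polynomials of Nikishin systems under the Turán condition. The idea is that iterating the Cauchy factors in the kernel $K(x_1,x_m)$, each intermediate measure $\sigma_j$ for $2\leq j\leq m-1$ contributes, after integration, a Markov function which can be absorbed into the data of a Nikishin system. In this way $Q_n$ should be identified, up to a non-zero normalizing constant, with the relevant component of a mixed type Hermite-Padé problem on $\Delta_m$ built from $\mathcal{N}(\sigma_{m-1},\ldots,\sigma_1)$, while $P_n$ plays the symmetric role on $\Delta_1$ for the Nikishin system obtained by reversing the ordering of the measures. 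This identification is the one underlying the approximation scheme of \cite{LMS} on which the present paper is built; by the symmetry of (\ref{biort}) under reversal of the ordering, it suffices to prove the assertion for $Q_n$ and then apply the same argument to the reversed data to obtain $P_n$.

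Once the mixed type interpretation is in place, I would invoke the ratio asymptotic theorem for mixed type multiple orthogonal polynomials of Nikishin systems whose generating measures satisfy the Turán condition $\sigma_k'>0$ a.e. on $\Delta_k$, along the lines of \cite{FLLS} and \cite{AGR} (ultimately going back to Rakhmanov's theorem). That theorem produces functions $\varphi_1\in\mathcal{H}(\mathbb{C}\setminus\Delta_1)$ and $\varphi_m\in\mathcal{H}(\mathbb{C}\setminus\Delta_m)$, characterized as specific branches of the algebraic function defined on the $(m+1)$-sheeted Riemann surface associated with the vector equilibrium problem behind Theorem \ref{teo1}, equivalently as the unique solutions of the boundary value problem attached to that equilibrium problem. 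The uniform convergence on compact subsets of $\mathbb{C}\setminus\Delta_1$ and $\mathbb{C}\setminus\Delta_m$ asserted in (\ref{asin1}) is part of the conclusion of the cited theorem.

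The main obstacle is the first step. A direct expansion of $K(x_1,x_m)$ against $x_1^\nu$ in (\ref{conddefanm2}) yields relations that \emph{couple} the moments of $Q_n$ against several different transformed measures, so the biorthogonality does not, on its face, assume the clean Nikishin multiple orthogonality format required by the ratio asymptotic machinery. Only by passing to the mixed type Hermite-Padé formulation, in which $(P_n,Q_n)$ together solve a single linear system against a Nikishin family, does one obtain orthogonality conditions of the right shape. One must then verify that the relevant multi-indices lie on the diagonal sequence for which the theorem in \cite{FLLS} applies, and carefully track the monic normalization of $P_n$ and $Q_n$ through the correspondence so that the limits produced by the theorem coincide with the functions $\varphi_1,\varphi_m$ in the statement. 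Once this bookkeeping is completed, the result follows by citation.
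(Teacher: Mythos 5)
Your first step is essentially the paper's: $Q_n$ is identified with the polynomial $Q_{n,m}=(-1)^m a_{n,m}$ of the multi-level Hermite--Pad\'e construction for $\mathcal{N}(\sigma_1,\ldots,\sigma_m)$ (Lemma \ref{l2}), and $P_n$ with the corresponding polynomial for the reversed system $\mathcal{N}(\sigma_m,\ldots,\sigma_1)$, so that the statement for $P_n$ follows from the one for $Q_n$ by reversing the ordering (Remark \ref{pes}, Corollary \ref{pesratio}); and the limits are indeed branches of the conformal map on the $(m+1)$-sheeted Riemann surface. The gap is in your second step: there is no off-the-shelf ratio asymptotic theorem in \cite{AGR} or \cite{FLLS} that applies to these polynomials. \cite{AGR} proves ratio asymptotics for type II Hermite--Pad\'e polynomials of a Nikishin system, and \cite{FLLS} treats mixed type multiple orthogonality for two Nikishin systems (and is used in this paper only for the weak-asymptotic Lemma \ref{gonchar-rakhmanov}); the multi-level scheme \eqref{tipoIam}--\eqref{tipoIdm} introduced in \cite{LMS} is a different orthogonality structure, precisely because the kernel $K(x_1,x_m)$ is not totally positive for $m>2$, as the introduction stresses. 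So the result cannot ``follow by citation'' after bookkeeping of multi-indices and normalizations: the heart of the matter is to prove the ratio asymptotics in this new setting, which is what Theorem \ref{teo5} does.

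Concretely, the missing work is the following chain, none of which is supplied by the cited theorems: (i) the interlacing of the zeros of $Q_{n,k}$ and $Q_{n+1,k}$ (Lemma \ref{l3}), which makes $\{Q_{n+1,k}/Q_{n,k}\}_n$ a normal family so that one can pass to subsequential limits $G_k$; (ii) the reinterpretation \eqref{int3}--\eqref{int4} of $Q_{n,k}$ as the $n$-th orthogonal polynomial with respect to the varying measure $|\mathcal{H}_{n,k}|\,d\sigma_k/|Q_{n,k-1}Q_{n,k+1}|$, together with the ratio and relative asymptotics for orthogonal polynomials with varying measures (\cite[Theorems 6, 9]{kn:B-G}, \cite[Theorem 2]{DBG}), which is where the Tur\'an hypothesis $\sigma_k'>0$ a.e.\ enters; and (iii) the verification that the subsequential limits, suitably normalized by constants $c_k$ obtained from the linear system \eqref{norm-equa}, solve the boundary value problem \eqref{eq:syst}, whose uniqueness (\cite[Lemma 4.2]{AGR}, quoted here as Lemma \ref{unico}) is the only ingredient that is genuinely taken from \cite{AGR}. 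Your proposal gestures at this machinery but leaves steps (i)--(iii) to a theorem that does not exist for this construction, so as written the proof is incomplete at its central point.
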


The functions $\varphi_1$ and $\varphi_m$ are expressed in terms of the branches of a conformal mapping defined on an $m+1$ sheeted Riemann surface of genus zero.  This result is a consequence of the more general Theorem \ref{teo5} (see also Corollary \ref{pesratio}) stated and proved in Section 4 where the Riemann surface is built, the functions $\varphi_1,\varphi_m$ are identified and the ratio asymptotic of other polynomials and forms related with this problem are given.

\medskip

The contents of Sections 3-4 have been described previously. In Section 2 we establish a series of auxiliary results needed in the proofs of the main results. In particular, the existence of the sequences of biorthogonal polynomials is established as well as some properties of their zeros.

\section{Multi-orthogonality relations}

 The results of this section have an algebraic flavor but are indispensable in all what follows. Some may be extracted from \cite{LMS} but we will include the proofs when it is essential to make the reading more comprehensive.

 \subsection{Nikishin system}  Nikishin systems were first introduced in \cite{nik}.
Let $\Delta_{\alpha}, \Delta_{\beta}$ be two bounded intervals contained in the real such that $\Delta_{\alpha}\cap\Delta_{\beta}=\emptyset$. Take $\sigma_{\alpha} \in {\mathcal{M}}(\Delta_{\alpha})$ and $\sigma_{\beta} \in {\mathcal{M}}(\Delta_{\beta})$. Using the  differential notation, we define a third measure $\langle \sigma_{\alpha},\sigma_{\beta} \rangle$ as follows
\[d \langle \sigma_{\alpha},\sigma_{\beta} \rangle (x) := \widehat{\sigma}_{\beta}(x) d\sigma_{\alpha}(x), \qquad \widehat{\sigma}_{\beta}(z) = \int  \frac{d\sigma_{\beta}(x)}{z-x},\]
where $\widehat{\sigma}_{\beta}$ is the Cauchy transform of $\sigma_{\beta}$.


\medskip

Consider a collection  of intervals $\Delta_j, j=1,\ldots,m,$ verifying \eqref{nonint} and measures $\sigma_j \in \mathcal{M}(\Delta_j)$.

\begin{definition} We say that $ (s_{1,1},\ldots,s_{1,m}) = {\mathcal{N}}(\sigma_1,\ldots,\sigma_m)$, where
	\begin{equation} \label{eq:ss}
	s_{1,1} = \sigma_1, \quad s_{1,2} = \langle \sigma_1,\sigma_2 \rangle,  \quad \ldots \quad s_{1,m} = \langle \sigma_1, \sigma_2,\ldots,\sigma_m  \rangle
	\end{equation}
	is the \textit{Nikishin system} of measures generated by $(\sigma_1,\ldots,\sigma_m)$. Here, $s_{1,j}, j \geq 3,$ is defined inductively by taking
\[ \langle \sigma_1, \sigma_2, \ldots,\sigma_j \rangle = \langle \sigma_1, \langle \sigma_2, \ldots,\sigma_j \rangle \rangle.
\]
\end{definition}
%

\medskip

The definition of a Nikishin system can be extended to the case when the intervals $\Delta_j$ are unbounded or touching. The results of this section remain valid when the Nikishin systems  are constructed  following the more general definition given in \cite{LMS}. However, the asymptotic results given in Sections 3-4 require that we use the more restricted version presented here (which, incidentally, coincides with its original formulation in \cite{nik}).

\medskip

In what follows, for $1\leq j, k\leq m$, we denote
\begin{equation} \label{eq:sjk}
s_{j,k} := \langle \sigma_j,\sigma_{j+1},\ldots,\sigma_k \rangle, \quad j <  k, \qquad s_{j,k} := \langle \sigma_j,\sigma_{j-1},\ldots,\sigma_k\rangle, \qquad j > k.
\end{equation}

\medskip

We will make frequent use of \cite[Theorem 1.3]{LS}. For convenience of the reader, we state it here as a lemma. With the present assumptions, the statements are immediate consequences of Cauchy's integral formula, Cauchy's theorem, and the Fubini theorem.

\begin{lemma} \label{reduc} Let $(s_{1,1},\ldots,s_{1,m}) = \mathcal{N}(\sigma_1,\ldots,\sigma_m)$ be given. Assume that there exist polynomials with real coefficients $\ell_0,\ldots,\ell_m$ and a polynomial $w$ with real coefficients whose zeros lie in $\mathbb{C} \setminus \Delta_1$  such that
\[\frac{\mathcal{L}_0(z)}{w(z)} \in \mathcal{H}(\mathbb{C} \setminus \Delta_1)\qquad \mbox{and} \qquad \frac{\mathcal{L}_0(z)}{w(z)} = \mathcal{O}\left(\frac{1}{z^N}\right), \quad z \to \infty,
\]
where $\mathcal{L}_0  := \ell_0 + \sum_{k=1}^m \ell_k  \widehat{s}_{1,k} $ and $N \geq 1$. Let $\mathcal{L}_1  := \ell_1 + \sum_{k=2}^m \ell_k  \widehat{s}_{2,k} $. Then
\begin{equation} \label{eq:3}
\frac{\mathcal{L}_0(z)}{w(z)} = \int \frac{\mathcal{L}_1(x)}{(z-x)} \frac{{\rm d}\sigma_1(x)}{w(x)}.
\end{equation}
If $N \geq 2$, we also have
\begin{equation} \label{eq:4}
\int x^{\nu}  \mathcal{L}_1(x)  \frac{{\rm d}\sigma_1(x)}{w(x)} = 0, \qquad \nu = 0,\ldots, N -2.
\end{equation}
In particular, $\mathcal{L}_1$ has at least $N -1$ sign changes in  $\stackrel{\circ}{\Delta}_1 $.
\end{lemma}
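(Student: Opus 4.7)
The plan is to exploit that $\mathcal{L}_0/w$ is holomorphic on $\mathbb{C}\setminus\Delta_1$ with $O(z^{-N})$ decay at infinity, so it can be recovered from a contour integral around $\Delta_1$. Concretely, let $\Gamma$ be a positively oriented Jordan contour enclosing $\Delta_1$ tightly enough to avoid the zeros of $w$, and take $z$ outside $\Gamma$. Applying Cauchy's theorem on the annulus between $\Gamma$ and a large circle $|\zeta|=R$, and letting $R\to\infty$ (using $N\ge 1$), one obtains
$$\frac{\mathcal{L}_0(z)}{w(z)}=\frac{1}{2\pi i}\oint_{\Gamma}\frac{\mathcal{L}_0(\zeta)}{w(\zeta)(z-\zeta)}\,d\zeta.$$

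Next I would substitute $\mathcal{L}_0=\ell_0+\sum_{k=1}^m\ell_k\widehat{s}_{1,k}$ and treat each term separately. The $\ell_0$ piece contributes zero because $\ell_0/w$ is holomorphic inside $\Gamma$ (thanks to the hypothesis on the zeros of $w$) while $z$ is exterior. For $k\ge 1$, I would write $\widehat{s}_{1,k}(\zeta)=\int\frac{ds_{1,k}(t)}{\zeta-t}$, interchange the order of integration by Fubini, and evaluate the inner contour integral by residues; the only pole inside $\Gamma$ is at $\zeta=t\in\Delta_1$. This yields
$$\frac{\mathcal{L}_0(z)}{w(z)}=\sum_{k=1}^m\int\frac{\ell_k(t)\,ds_{1,k}(t)}{w(t)(z-t)}.$$
Using $ds_{1,1}=d\sigma_1$ and $ds_{1,k}(t)=\widehat{s}_{2,k}(t)\,d\sigma_1(t)$ for $k\ge 2$, the integrand collapses to $\mathcal{L}_1(t)\,d\sigma_1(t)/((z-t)w(t))$, giving \eqref{eq:3}.

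For \eqref{eq:4}, I would expand $\frac{1}{z-t}=\sum_{\nu\ge 0}t^\nu/z^{\nu+1}$ for $|z|$ large in the integral just obtained; the Laurent series of $\mathcal{L}_0/w$ at infinity then has $\nu$-th coefficient equal to $\int t^\nu\mathcal{L}_1(t)\,d\sigma_1(t)/w(t)$, and the $O(z^{-N})$ decay forces these to vanish for $\nu=0,\ldots,N-2$.

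The sign-change assertion is a standard argument: if $\mathcal{L}_1$ had at most $N-2$ sign changes in $\stackrel{\circ}{\Delta}_1$, at points $x_1,\ldots,x_k$ with $k\le N-2$, then $p(t)=\prod_{i=1}^k(t-x_i)$ has degree $\le N-2$ and $p\mathcal{L}_1$ has constant sign on $\Delta_1$. Since $w$ also has constant sign on $\Delta_1$ (having no zeros there) and $\sigma_1$ is a positive measure with full support on $\Delta_1$, the integral $\int p(t)\mathcal{L}_1(t)\,d\sigma_1(t)/w(t)$ would be strictly nonzero, contradicting the moment relations \eqref{eq:4} applied by linearity. The main technical care required is in the Fubini/residue step; this is the only place where the hypothesis that the zeros of $w$ lie off $\Delta_1$ is genuinely used, since it allows $\Gamma$ to be shrunk close to $\Delta_1$ so that $\zeta=t$ is the unique singularity of the integrand inside $\Gamma$.
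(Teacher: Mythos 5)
Your argument is correct and is exactly the route the paper points to: it cites the lemma as \cite[Theorem~1.3]{LS} and remarks that under the present assumptions it follows directly from Cauchy's integral formula, Cauchy's theorem, and Fubini; your write-up simply fills in those details (contour representation outside $\Delta_1$ using $N\ge 1$, Fubini plus residue at $\zeta=t$ using that the zeros of $w$ avoid $\Delta_1$, Laurent expansion at infinity for the moment relations, and the standard sign-change argument). The only tacit point worth flagging is that the sign-change conclusion presupposes $\mathcal{L}_1\not\equiv 0$, which is implicit in the paper's statement as well.
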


\subsection{Multi-level Hermite-Pad\'e approximation}

We will show shortly that the biorthogonal polynomials $Q_n$ are intimitely connected with a mixed (multilevel) type Hermite-Pad\'e approximation problem introduced in \cite{LMS}. Let us start with the definition.

\begin{definition}{}\label{MTPm2} Consider the Nikishin system $\mathcal{N}(\sigma_1, \sigma_2,\ldots, \sigma_{m})$. Then, for each $n \in \mathbb{N},$ there exist  polynomials $a_{n,0},a_{n,1},\ldots, a_{n,m}$ with $\deg a_{n,j}\leq n-1, j=0,1\ldots,m-1,$  and $\deg a_{n,m}\leq n$, not all identically equal to zero, called \textit{multi-level (ML) Hermite-Pad\'e polynomials} that verify:
\begin{align}
\mathcal{A}_{n,0}(z) := \left(a_{n,0}-a_{n,1}\widehat{s}_{1,1}+a_{n,2}\widehat{s}_{1,2}\cdots+ (-1)^{m}a_{n,m}\widehat{s}_{1,m}\right)(z)=\mathcal{O}(1/z^{n+1})\label{tipoIam}\\
\mathcal{A}_{n,1}(z) :=\left(-a_{n,1}+a_{n,2}\widehat{s}_{2,2}-a_{n,3}\widehat{s}_{2,3}\cdots+ (-1)^{m}a_{n,m}\widehat{s}_{2,m}\right)(z)=\mathcal{O}(1/z)\label{tipoIbm}\\
........................................................................................\nonumber\\
\mathcal{A}_{n,m-1}(z) :=\left((-1)^{m-1}a_{n,m-1}+(-1)^{m}a_{n,m}\widehat{s}_{m,m}\right)(z)=\mathcal{O}(1/z)\label{tipoIdm},
\end{align}
where $\mathcal{O}(\cdot)$ is as $z \to \infty$.
By extension, we take $\mathcal{A}_{n,m} = (-1)^m a_{n,m}$.
\end{definition}

The existence of $a_{n,k}, k=0,\ldots,m,$ is obtained solving a homogeneous linear system of $(n+1)m$ equations on the $(n+1)m +1$ coefficients of the polynomials. Among other properties, in \cite{LMS} (see also Lemma \ref{l2} below) it was shown that $\deg a_{n,m} = n$ and the vector of polynomials $(a_{n,0},\ldots,a_{n,m})$ is uniquely determined up to a constant factor. Consequently, the linear form $\mathcal{A}_{n,0}$ is uniquely determined up to a constant factor  and  we normalize it so that $\mathcal{A}_{n,m} = (-1)^m a_{n,m}$ is monic.

\medskip

From \eqref{eq:3} applied  with $w \equiv 1$ it readily follows that
\[ \mathcal{A}_{n,j}(z) = \int \frac{\mathcal{A}_{n,j+1}(x)}{z-x} {\rm d}\sigma_{j+1}(x), \qquad j=0,\ldots,m-1.
\]
Consequently, for $j=0,\ldots,m-1$
\begin{equation} \label{formint} \mathcal{A}_{n,j}(z) = \int\cdots\int \frac{\mathcal{A}_{n,m}(x_m){\rm d}\sigma_{j+1}(x_{j+1})\cdots {\rm d}\sigma_{m}(x_{m})}{(z - x_{j+1})(x_{j+1} - x_{j+2})\cdots(x_{m-1}-x_m)}.
\end{equation}
When $j=1$ notice that
\begin{equation} \label{an1} \mathcal{A}_{n,1}(x_1) = \int \mathcal{A}_{n,m}(x_m) K(x_1,x_m) {\rm d}\sigma_m(x_m).
\end{equation}


Some of the statements of the next two results may be extracted from \cite{LMS}. However, new notation is introduced and several formulas do not appear explicitly in that paper so, for  convenience of the reader, we include a full proof.

\begin{lemma}\label{l2} Consider the Nikishin system $\mathcal{N}(\sigma_1, \sigma_2,\ldots, \sigma_{m})$. For each fixed $n \in \mathbb{Z}_+$ and $j=1,\ldots,m$, $\mathcal{A}_{n,j}$ has exactly $n$ zeros in $\mathbb{C} \setminus \Delta_{j+1}$ they are all simple and lie in $\stackrel{\circ}{\Delta}_{j}$ $({\Delta}_{m+1} = \emptyset)$. $\mathcal{A}_{n,0}$ has no zero in $\mathbb{C} \setminus \Delta_{1}$. Let $Q_{n,j}$ denote the monic polynomial of degree $n$ whose zeros are those of  $\mathcal{A}_{n,j}$ in $\Delta_j$.  We have $Q_{n,m}  = Q_n$ is the $n$-th biorthogonal polynomial verifying \eqref{conddefanm}. For each $j=0,\ldots,m-1$,
\begin{equation} \label{int1} \frac{\mathcal{A}_{n,j}(z)}{Q_{n,j}(z)} = \int   \frac{\mathcal{A}_{n,j+1}(x)}{z-x}  \frac{ {\rm d}\sigma_{j+1}(x)}{Q_{n,j}(x)},
\end{equation}
where $Q_{n,0} \equiv 1$, and
\begin{equation} \label{int2}
\int x^{\nu}  \mathcal{A}_{n,j+1}(x)  \frac{{\rm d}\sigma_{j+1}(x)}{Q_{n,j}(x)} = 0, \qquad \nu = 0,\ldots, n-1,
\end{equation}
\end{lemma}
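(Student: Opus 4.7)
The plan is to deploy Lemma \ref{reduc} as an iterative reduction engine, sliding along the chain $\mathcal{A}_{n,0},\mathcal{A}_{n,1},\ldots,\mathcal{A}_{n,m}$ by invoking it on successive shifted Nikishin systems $\mathcal{N}(\sigma_{j+1},\ldots,\sigma_{m})$. First I would run a forward pass that establishes the integral formulas and produces at least $n$ sign changes at each level; then a backward comparison to sharpen \emph{at least $n$} to \emph{exactly $n$}; and finally a Fubini step to identify $Q_{n,m}$ with $Q_n$.

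For the forward pass, start at $j=0$: $\mathcal{A}_{n,0}$ is analytic on $\mathbb{C}\setminus\Delta_{1}$, decays like $\mathcal{O}(1/z^{n+1})$, and $w\equiv 1$ is admissible, so Lemma \ref{reduc} immediately yields \eqref{int1} and \eqref{int2} for $j=0$ (using $Q_{n,0}\equiv 1$) and at least $n$ sign changes of $\mathcal{A}_{n,1}$ in $\stackrel{\circ}{\Delta}_{1}$. Pick a monic polynomial $Q_{n,1}$ of degree $n$ vanishing on $n$ of these sign changes; then $\mathcal{A}_{n,1}/Q_{n,1}$ is analytic on $\mathbb{C}\setminus\Delta_{2}$ and decays as $\mathcal{O}(1/z^{n+1})$. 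Re-applying Lemma \ref{reduc} to the shifted system $\mathcal{N}(\sigma_{2},\ldots,\sigma_{m})$ with $w=Q_{n,1}$ yields \eqref{int1} and \eqref{int2} for $j=1$ and $n$ more sign changes at the next level. Iterating through $j=2,\ldots,m-1$ delivers \eqref{int1} and \eqref{int2} in full, together with at least $n$ sign changes of $\mathcal{A}_{n,j+1}$ in $\stackrel{\circ}{\Delta}_{j+1}$. Since $\mathcal{A}_{n,m}=(-1)^{m}a_{n,m}$ is a polynomial of degree at most $n$ with at least $n$ interior sign changes, it must have degree exactly $n$ and simple zeros inside $\stackrel{\circ}{\Delta}_{m}$, whence $Q_{n,m}=\mathcal{A}_{n,m}$.

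For the sharp zero count, let $k_{j}$ be the number of zeros of $\mathcal{A}_{n,j}$ in $\mathbb{C}\setminus\Delta_{j+1}$ counted with multiplicity, and let $\widetilde{Q}_{n,j}$ be the monic polynomial having exactly those zeros (it has real coefficients because the $a_{n,k}$ are real and complex zeros pair up). Rerunning the reduction with $w=\widetilde{Q}_{n,j}$ in place of $Q_{n,j}$: the decay of $\mathcal{A}_{n,j}/\widetilde{Q}_{n,j}$ is $\mathcal{O}(1/z^{k_{j}+1})$ for $j\geq 1$ and $\mathcal{O}(1/z^{n+1+k_{0}})$ for $j=0$, so Lemma \ref{reduc} delivers $k_{j+1}\geq k_{j}$ for $j\geq 1$ and $k_{1}\geq n+k_{0}$. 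Combined with the terminal bound $k_{m}\leq\deg a_{n,m}=n$ and the forward lower bounds $k_{j}\geq n$ for $j\geq 1$, this pins down $k_{1}=\cdots=k_{m}=n$ and $k_{0}=0$. Hence each $\mathcal{A}_{n,j}$, $j\geq 1$, has exactly $n$ zeros in $\mathbb{C}\setminus\Delta_{j+1}$, all simple and lying in $\stackrel{\circ}{\Delta}_{j}$, while $\mathcal{A}_{n,0}$ has none there.

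Finally, to identify $Q_{n,m}$ with $Q_{n}$, I would substitute the representation \eqref{formint} for $\mathcal{A}_{n,1}(x_{1})$ into the $j=0$ instance of \eqref{int2}, interchange the order of integration (legal since the differences $(x_{i}-x_{i+1})$ are bounded away from zero by \eqref{nonint}), and read off precisely \eqref{conddefanm} with $\mathcal{A}_{n,m}=Q_{n,m}$ in place of $Q_{n}$; monicity and the uniqueness of the biorthogonal polynomial then force $Q_{n,m}=Q_{n}$. The main obstacle I anticipate is the backward inequality chain, where care is needed to ensure $\widetilde{Q}_{n,j}$ has real coefficients and all its zeros in $\mathbb{C}\setminus\Delta_{j+1}$, so that Lemma \ref{reduc} remains legitimately applicable at every rung of the ladder.
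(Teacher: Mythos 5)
Your overall route is the same as the paper's: iterate Lemma \ref{reduc} along the shifted systems $\mathcal{N}(\sigma_{j+1},\ldots,\sigma_m)$ to obtain the integral formulas, the orthogonality relations and at least $n$ sign changes at each level, use the degree bound on $a_{n,m}$ at the last level to force exactness, and identify $Q_{n,m}$ with $Q_n$ by inserting \eqref{an1} into \eqref{int2} for $j=0$. Your only real deviation is the bookkeeping in the exactness step: you compare total zero counts $k_j$ in $\mathbb{C}\setminus\Delta_{j+1}$ via a monotone chain, whereas the paper tracks auxiliary polynomials built from the sign changes plus at most one extra zero and the associated decay orders $N_j$; these are two dressings of the same idea.

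Two points in your backward pass need repair as written. First, defining $\widetilde{Q}_{n,j}$ as ``the monic polynomial having exactly those zeros'' presupposes $k_j<\infty$, which is not known a priori: $\mathcal{A}_{n,j}$ is analytic only on $\mathbb{C}\setminus\Delta_{j+1}$, so its zeros there could in principle accumulate on $\Delta_{j+1}$ (only the sign changes are automatically finite in number, since they lie on the compact set $\Delta_j$ interior to the domain of analyticity). This is exactly why the paper adjoins to the sign changes at most one additional zero, together with its complex conjugate, rather than all of them. Your argument is salvageable without that trick: apply your inequality to an arbitrary finite subcollection of $\ell$ zeros to get $k_{j+1}\ge \ell$, hence $k_{j+1}\ge k_j$ in the extended sense, and then the terminal bound $k_m\le n$ forces finiteness (and equality) throughout. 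Second, the terminal bound should read $k_m\le\deg a_{n,m}\le n$ and requires $a_{n,m}\not\equiv 0$; writing $\deg a_{n,m}=n$ there assumes part of what is being proved. The missing justification is the one the paper gives: if $a_{n,m}\equiv 0$, then by \eqref{formint} all the forms, and hence all the polynomials $a_{n,j}$, vanish identically, contradicting their definition. With these two patches your chain $n+k_0\le k_1\le\cdots\le k_m\le n$ is valid, and the remaining steps (simplicity and location of the zeros, the validity of \eqref{int1}--\eqref{int2} for the true $Q_{n,j}$, and the identification $Q_{n,m}=Q_n$) go through exactly as in the paper.
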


\begin{proof} Fix $n \in \mathbb{Z}_+$. According to \eqref{tipoIam}, \eqref{eq:3}, and \eqref{eq:4}
\[ \mathcal{A}_{n,0}(z) = \int  \frac{\mathcal{A}_{n,1}(x)  {{\rm d}\sigma_1(x)}}{z-x}
\]
and
\[
\int x^{\nu}  \mathcal{A}_{n,1}(x)  {{\rm d}\sigma_1(x)} = 0, \qquad \nu = 0,\ldots, n-1.
\]
Therefore, $\mathcal{A}_{n,1}$ has at least $n$ sign changes on $\stackrel{\circ}{\Delta}_{1}$. Should the right hand of \eqref{tipoIam} be $\mathcal{O}(1/z^{n+2})$ or  $\mathcal{A}_{n,0}$ have some zero in $\mathbb{C} \setminus \Delta_{1}$ the use of \eqref{eq:4} would allow us to conclude that the number of sign changes of $\mathcal{A}_{n,1}$ on $\Delta_1$ would be at least $n+1$.

\medskip

Let $Q_{n,1}^*$ be a monic polynomial with real coefficients constructed as follows. It contains as zeros all the points where $\mathcal{A}_{n,1}$ changes sign on $\stackrel{\circ}{\Delta}_{1}$ taking account of their multiplicity (by the identity principle there can be at most a finite number of such points). Should $\mathcal{A}_{n,1}$ have any other root in $\mathbb{C} \setminus \Delta_{2}$ different from the ones taken above, we assign to $Q_{n,1}^*$ one such zero and its complex conjugate if it is a complex number. This is possible because the functions $\mathcal{A}_{n,j}$ are symmetric with respect to the real line and its non real roots come in conjugate pairs.
If $\mathcal{A}_{n,1}$ has exactly $n$ simple zeros on $\stackrel{\circ}{\Delta}_{1}$ and no other root in $\mathbb{C} \setminus \Delta_{2}$ then $Q_{n,1}^*$ is the polynomial denoted  $Q_{n,1}$ in the statement of the lemma; otherwise, $\deg Q_{n,1}^* > n$. We will show that the second option is not possible.

\medskip

By the form in which $Q_{n,1}^*$ was chosen
\[\frac{\mathcal{A}_{n,1}(z)}{Q_{n,1}^*(z)} \in \mathcal{H}(\mathbb{C} \setminus \Delta_2)\qquad \mbox{and} \qquad \frac{\mathcal{A}_{n,1}(z)}{Q_{n,1}^*(z)} = \mathcal{O}\left(\frac{1}{z^{N_1}}\right), \quad z \to \infty,
\]
where $N_1 > n+1$ if either $\deg Q_{n,1}^* > n$ or the expansion in the right hand side of \eqref{tipoIdm} starts at $1/z^2$; otherwise,  $N_1 = n+1, \deg Q_{n,1}^* = n$ and $Q_{n,1}^* = Q_{n,1}$. From \eqref{eq:3} and \eqref{eq:4}
\[\frac{\mathcal{A}_{n,1}(z)}{Q_{n,1}^*(z)} = \int   \frac{\mathcal{A}_{n,2}(x)}{z-x}  \frac{ {\rm d}\sigma_2(x)}{Q_{n,1}^*(x)}
\]
and
\[
\int x^{\nu}  \mathcal{A}_{n,2}(x)  \frac{{\rm d}\sigma_2(x)}{Q_{n,1}^*(x)} = 0, \qquad \nu = 0,\ldots, N_1 -2,
\]
which implies that $\mathcal{A}_{n,2}$ has at least $N_1 -1$ sign changes on $\stackrel{\circ}{\Delta}_{2}$.

\medskip

Now, we can proceed as before defining $Q_{n,2}^*$ similar to the way in which $Q_{n,1}^*$ was chosen. Repeating the arguments employed above, we have
\[\frac{\mathcal{A}_{n,2}(z)}{Q_{n,2}^*(z)} = \int   \frac{\mathcal{A}_{n,3}(x)}{z-x}  \frac{ {\rm d}\sigma_3(x)}{Q_{n,2}^*(x)}
\]
and
\[
\int x^{\nu}  \mathcal{A}_{n,3}(x)  \frac{{\rm d}\sigma_3(x)}{Q_{n,2}^*(x)} = 0, \qquad \nu = 0,\ldots, N_2 -2,
\]
where $N_2 > n+1$ if either $\deg Q_{n,2}^* > n$ or the asymptotic expansion at $\infty$ of $\mathcal{A}_{n,2}$ starts at $1/z^2$. Otherwise, $N_2 = n+1. \deg Q_{n,2}^* = n$ and $Q_{n,2}^* = Q_{n,2}$. In particular,  $\mathcal{A}_{n,3}$ has at least $N_2 -1$ sign changes on $\stackrel{\circ}{\Delta}_{3}$.

\medskip

Following this line of reasoning, for each $j=0,\ldots,m-1$ we can define polynomials $Q_{n,j}^*$ with real coefficients whose zeros lie in $\mathbb{C} \setminus \Delta_{j+1}$, with at least $n$ sign changes on $\Delta_j$ such that
\[ \frac{\mathcal{A}_{n,j}(z)}{Q_{n,j}^*(z)} = \int   \frac{\mathcal{A}_{n,j+1}(x)}{z-x}  \frac{ {\rm d}\sigma_{j+1}(x)}{Q_{n,j}^*(x)},
\]
where $Q_{n,0}^* \equiv 1$, and
\[
\int x^{\nu}  \mathcal{A}_{n,j+1}(x)  \frac{{\rm d}\sigma_{j+1}(x)}{Q_{n,j}^*(x)} = 0, \qquad \nu = 0,\ldots, N_j -2,
\]
where $N_j > n+1$ if either $\deg Q_{n,j}^* > n$ or the asymptotic expansion at $\infty$ of $\mathcal{A}_{n,j}$ starts at $1/z^2$. Otherwise, $N_j = n+1, \deg Q_{n,j}^* = n$ and $Q_{n,j}^* = Q_{n,j}$.

\medskip

The last relation for $j=m-1$ reduces to
\[
\int x^{\nu}  a_{n,m}(x)  \frac{{\rm d}\sigma_{m}(x)}{Q_{n,m-1}^*(x)} = 0, \qquad \nu = 0,\ldots, N_{m-1} -2
\]
(recall that $\mathcal{A}_{n,m} = (-1)^m a_{n,m}$). Since $\deg a_{n,m} \leq n$, if $N_{m-1} > n+1$ the orthogonality relation would imply that $a_{n,m} \equiv 0$ and because of \eqref{formint} $a_{n,j} = 0, j=0,\ldots,m$ which is not the case. Therefore, $N_{m-1} = n+1$. This readily implies that $N_{j} = n+1, j=1,\ldots,m-1$. Consequently, $\deg Q_{n,j}^* = n, j=1,\ldots,m-1$ its zeros are simple and lie on $\stackrel{\circ}{\Delta}_j$ and $Q_{n,j}^* = Q_{n,j}, j=1,\ldots,m-1 $. Now the orthogonality relations imply that $a_{n,m}$ has exactly $n$ simple zeros on $\stackrel{\circ}{\Delta}_m$ and we can take $Q_{n,m} = (-1)^m a_{n,m}$. With this notation, the relations above render \eqref{int1} and \eqref{int2}. From \eqref{int2} with $j=0$ and the expression for $\mathcal{A}_{n,1}$ given in \eqref{an1} it follows that $Q_{n,m}$ is the $n$-th biorthogonal polynomial $Q_n$ defined in \eqref{conddefanm2}. We have completed the proof. \end{proof}

Set
\begin{equation} \label{Hnj}
 \mathcal{H}_{n,j}:=\frac{ Q_{n,j+1}\mathcal{A}_{n,j}}{Q_{n,j}},\qquad j=0,\ldots,m-1.
\end{equation}
As we did before, we take $Q_{n,0} \equiv Q_{n,m+1} \equiv 1$.

\begin{lemma}\label{l4} Consider the Nikishin system $\mathcal{N}(\sigma_1, \sigma_2,\ldots, \sigma_{m})$. For each fixed $n \in \mathbb{Z}_+$ and $j=0,\ldots,m-1$
\begin{equation} \label{int3}
\int x^{\nu}  Q_{n,j+1}(x)  \frac{\mathcal{H}_{n,j+1}(x){\rm d}\sigma_{j+1}(x)}{Q_{n,j}(x)Q_{n,j+2}(x)} = 0, \qquad \nu = 0,\ldots, n-1,
\end{equation}
and
\begin{equation} \label{int4} \mathcal{H}_{n,j}(z) = \int \frac{ Q^2_{n,j+1}(x)}{z-x} \frac{\mathcal{H}_{n,j+1}(x){\rm d}\sigma_{j+1}(x)}{Q_{n,j}(x)Q_{n,j+2}(x)}.
\end{equation}
\end{lemma}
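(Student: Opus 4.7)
The plan is to derive both identities directly from the analogous formulas for $\mathcal{A}_{n,j}$ supplied by Lemma~\ref{l2}, using the definition \eqref{Hnj} as the bridge. From \eqref{Hnj} applied with index $j+1$, I can write
$$
\mathcal{A}_{n,j+1}(x) \;=\; \frac{Q_{n,j+1}(x)\,\mathcal{H}_{n,j+1}(x)}{Q_{n,j+2}(x)},
$$
with the convention $Q_{n,m+1}\equiv 1$. Substituting this expression straight into \eqref{int2} immediately produces \eqref{int3}, since the factor $1/Q_{n,j+2}(x)$ pairs with $1/Q_{n,j}(x)$ to yield precisely the weight in \eqref{int3}.

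To get \eqref{int4}, I would substitute the same expression into \eqref{int1} and then multiply both sides by $Q_{n,j+1}(z)$, which gives
$$
\mathcal{H}_{n,j}(z) \;=\; \int \frac{Q_{n,j+1}(z)\,Q_{n,j+1}(x)}{z-x}\;\frac{\mathcal{H}_{n,j+1}(x)\,{\rm d}\sigma_{j+1}(x)}{Q_{n,j}(x)\,Q_{n,j+2}(x)}.
$$
The target formula \eqref{int4} has $Q_{n,j+1}^2(x)$ instead of $Q_{n,j+1}(z)Q_{n,j+1}(x)$ in the numerator, so I would apply the standard add-and-subtract trick by writing
$$
\frac{Q_{n,j+1}(z)\,Q_{n,j+1}(x)}{z-x} \;=\; \frac{Q_{n,j+1}^2(x)}{z-x} \;+\; \frac{Q_{n,j+1}(z)-Q_{n,j+1}(x)}{z-x}\,Q_{n,j+1}(x).
$$
The first summand reproduces the right-hand side of \eqref{int4}. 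The kernel $\bigl(Q_{n,j+1}(z)-Q_{n,j+1}(x)\bigr)/(z-x)$ is a polynomial in $x$ of degree at most $n-1$ (with coefficients depending on $z$), so the integral of the second summand is a $z$-dependent linear combination of the moments appearing in \eqref{int3} for $\nu=0,\ldots,n-1$. Each such moment vanishes, so the error term disappears and \eqref{int4} follows.

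There is no genuine analytic obstacle; the whole argument is a rearrangement of Lemma~\ref{l2} using the definition of $\mathcal{H}_{n,j}$. The only point one has to be careful about is the order of operations: the orthogonality \eqref{int3} must be established \emph{before} it is invoked to kill the remainder integral in the derivation of \eqref{int4}, so proving the two relations in the order listed is essential to avoid circularity.
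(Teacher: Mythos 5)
Your proposal is correct and follows essentially the same route as the paper: \eqref{int3} is \eqref{int2} rewritten via \eqref{Hnj}, and \eqref{int4} is obtained from \eqref{int1} together with the fact that the divided difference $\bigl(Q_{n,j+1}(z)-Q_{n,j+1}(x)\bigr)/(z-x)$ has degree at most $n-1$ in $x$ and is therefore annihilated by the orthogonality relations. The only difference is presentational (you multiply \eqref{int1} by $Q_{n,j+1}(z)$ first and then add and subtract, while the paper starts from the vanishing of the divided-difference integral), and your remark about establishing \eqref{int3} before using it matches the paper's order of argument.
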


\begin{proof} It is easy to see that \eqref{int3} is the same as \eqref{int2} with the new notation. Since $\deg{Q_{n,j+1} = n}$,  \eqref{int3} implies that
\[\int \frac{Q_{n,j+1}(z) - Q_{n,j+1}(x)}{z-x}  Q_{n,j+1}(x)  \frac{\mathcal{H}_{n,j+1}(x){\rm d}\sigma_{j+1}(x)}{Q_{n,j}(x)Q_{n,j+2}(x)} = 0,\]
In other words,
\[ Q_{n,j+1}(z)\int \frac{Q_{n,j+1}(x)}{z-x}\frac{\mathcal{H}_{n,j+1}(x){\rm d}\sigma_{j+1}(x)}{Q_{n,j}(x)Q_{n,j+2}(x)} = \int \frac{Q_{n,j+1}^2(x)}{z-x}\frac{\mathcal{H}_{n,j+1}(x){\rm d}\sigma_{j+1}(x)}{Q_{n,j}(x)Q_{n,j+2}(x)}.
\]
However, using \eqref{int1}
\[ \int \frac{Q_{n,j+1}(x)}{z-x}\frac{\mathcal{H}_{n,j+1}(x){\rm d}\sigma_{j+1}(x)}{Q_{n,j}(x)Q_{n,j+2}(x)} = \int \frac{\mathcal{A}_{n,j+1}(x)}{z-x}\frac{ {\rm d}\sigma_{j+1}(x)}{Q_{n,j}(x )} = \frac{\mathcal{A}_{n,j}(z)}{Q_{n,j}(z)}
\]
and the left hand of the previous equality reduces to $\mathcal{H}_{n,j}$. Therefore, \eqref{int4} takes place.
\end{proof}

\begin{remark} We wish to underline that the varying measure
\[\frac{\mathcal{H}_{n,j+1}(x){\rm d}\sigma_{j+1}(x)}{Q_{n,j}(x)Q_{n,j+2}(x)}, \qquad j=0,\ldots,m-1
\]
appearing in \eqref{int3} and \eqref{int4} has constant sign on $\Delta_{j+1}$. Indeed, $\sigma_{j+1}$ has constant sign and its support is contained in $\Delta_{j+1}$. This interval does not intersect $\Delta_{j}$ or $\Delta_{j+2}$ which is where the zeros of $Q_{n,j}$ and $Q_{n,j+2}$ lie, respectively. On the other hand $Q_{n,j+1}$ takes away from $\mathcal{A}_{n,j+1}$ all the zeros it had in $\mathbb{C} \setminus \Delta_{j+2}$; in particular, those in $\Delta_{j+1}$. This observation is of importance later on.
\end{remark}

The next lemma implies that for each $j=1,\ldots,m,$ the sequence $\left(Q_{n+1,j}/Q_{n,j}\right)$, $n \in \mathbb{Z}_+,$ is uniformly bounded on each compact subset of $\mathbb{C} \setminus \Delta_j$. This will be very useful in Section 4. The idea of the proof was borrowed from \cite[Theorem 2.1]{AGR} where a similar problem was treated.

\begin{lemma}\label{l3} Consider the Nikishin system $\mathcal{N}(\sigma_1, \sigma_2,\ldots, \sigma_{m})$. For each $n \in \mathbb{Z}_+$ and $j=1,\ldots,m$ the zeros of $Q_{n,j}$ and $Q_{n+1,j}$ interlace.
\end{lemma}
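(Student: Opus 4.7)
Plan. The approach, following \cite[Theorem 2.1]{AGR}, is to argue by contradiction using the full multi-orthogonality of $Q_{n,j}$ and $Q_{n+1,j}$ on $\Delta_j$ against varying but constant-sign measures.

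Fix $j\in\{1,\ldots,m\}$. By Lemma \ref{l2}, $Q_{n,j}$ and $Q_{n+1,j}$ have, respectively, $n$ and $n+1$ simple zeros in $\stackrel{\circ}{\Delta}_j$; denote them $y_1<\cdots<y_n$ and $x_1<\cdots<x_{n+1}$. Specializing \eqref{int3} so that $Q_{n,j}$ (resp.\ $Q_{n+1,j}$) plays the role of the orthogonal polynomial, and invoking the constant-sign statement from the Remark following Lemma \ref{l4}, one obtains
\[
\int_{\Delta_j} x^{\nu}\,Q_{n,j}(x)\,d\omega_n(x)=0,\quad \nu=0,\ldots,n-1,
\]
where
\[
d\omega_n(x):=\frac{\mathcal{H}_{n,j}(x)\,d\sigma_j(x)}{Q_{n,j-1}(x)\,Q_{n,j+1}(x)}\qquad (Q_{n,0}\equiv Q_{n,m+1}\equiv 1)
\]
has constant sign on $\Delta_j$, and analogously for $Q_{n+1,j}$ with $d\omega_{n+1}$ of constant sign.

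Since all zeros are simple, interlacing $x_1<y_1<x_2<\cdots<y_n<x_{n+1}$ is equivalent to $Q_{n,j}(x_k)\,Q_{n,j}(x_{k+1})<0$ for $k=1,\ldots,n$, which guarantees a zero of $Q_{n,j}$ in each of the $n$ intervals $(x_k,x_{k+1})$ and thereby accounts for all of them. Suppose for contradiction that this fails at some $k$, so $Q_{n,j}$ has an even number (possibly zero) of zeros in $(x_k,x_{k+1})$. Form the test polynomial
\[
\pi(x):=\frac{Q_{n+1,j}(x)}{(x-x_k)(x-x_{k+1})}=\prod_{i\neq k,\,k+1}(x-x_i),\qquad \deg\pi=n-1,
\]
and substitute into the orthogonality of $Q_{n,j}$:
\[
0=\int_{\Delta_j}\pi(x)\,Q_{n,j}(x)\,d\omega_n(x)=\int_{\Delta_j}\frac{Q_{n+1,j}(x)\,Q_{n,j}(x)}{(x-x_k)(x-x_{k+1})}\,d\omega_n(x).
\]
By the non-interlacing assumption the integrand has constant sign on $[x_k,x_{k+1}]$; combined with a case-by-case combinatorial analysis of the locations of the remaining zeros $\{y_l\}$ relative to $\{x_i\}_{i\neq k,k+1}$ (possibly supplemented by an auxiliary factor in $\pi$ when the naive $\pi$ fails), together with the constant sign of $d\omega_n$, one argues that the integrand is of one sign on all of $\Delta_j$, contradicting the vanishing of the integral.

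The main obstacle is precisely this sign-constructive step. Because $Q_{n,j}$ and $Q_{n+1,j}$ are orthogonal with respect to \emph{different} varying measures, the classical Christoffel--Darboux/three-term-recurrence route to interlacing (which exploits a single positive measure and the spectral structure it provides) is unavailable, and the naive choice of $\pi$ typically yields an integrand that has sign changes away from $[x_k,x_{k+1}]$. The substitute is the full multi-orthogonality with constant-sign varying measure from Lemma \ref{l4} and the Remark; the technical crux is the enumeration of the possible non-interlacing patterns (a single inner ``collision'' $(x_k,x_{k+1})$, two inner collisions, or a collision together with a zero of $Q_{n,j}$ outside $(x_1,x_{n+1})$) and the verification of the constant-sign conclusion in each, after possibly adjusting the test polynomial by an auxiliary factor dictated by the configuration.
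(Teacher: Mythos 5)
There is a genuine gap, and you in fact point at it yourself: the entire argument hinges on the ``sign-constructive step,'' which is left as a case analysis with an unspecified ``auxiliary factor,'' and this step cannot be completed in the form you propose. The obstruction is structural, not merely technical. The orthogonality \eqref{int3} that controls $Q_{n,j}$ is with respect to the varying measure $\mathcal{H}_{n,j}\,d\sigma_j/(Q_{n,j-1}Q_{n,j+1})$, while $Q_{n+1,j}$ is orthogonal with respect to the \emph{different} measure $\mathcal{H}_{n+1,j}\,d\sigma_j/(Q_{n+1,j-1}Q_{n+1,j+1})$; for orthogonal polynomials of consecutive degrees with respect to two unrelated (even positive) measures, interlacing is simply false in general, so no choice of test polynomial $\pi$ built only from the zeros of $Q_{n+1,j}$ and fed into the orthogonality of $Q_{n,j}$ can force the integrand $Q_{n+1,j}\,Q_{n,j}\,\pi^{-1}$-type expression to have constant sign on all of $\Delta_j$: away from the ``collision'' interval the sign pattern of $Q_{n+1,j}$ is not governed by $d\omega_n$ at all. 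Any correct proof must exploit a relation between the two levels $n$ and $n+1$ beyond their separate orthogonality.

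The idea that is missing (and that the paper uses, following \cite[Theorem 2.1]{AGR}) is to work with the linear forms rather than the polynomials: for arbitrary $A,B\in\mathbb{R}$, $|A|+|B|>0$, consider $\mathcal{D}_{n,j}=A\mathcal{A}_{n,j}+B\mathcal{A}_{n+1,j}$. Because $\mathcal{D}_{n,0}=\mathcal{O}(1/z^{n+1})$ and $\mathcal{D}_{n,j}=\mathcal{O}(1/z)$, $j=1,\ldots,m-1$, the reduction argument of Lemma \ref{l2} applies verbatim to these combinations and shows that each $\mathcal{D}_{n,j}$ has at least $n$ sign changes in $\stackrel{\circ}{\Delta}_{j}$ and at most $n+1$ zeros in $\mathbb{C}\setminus\Delta_{j+1}$, hence all its zeros there are real and simple. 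This immediately rules out common zeros of $\mathcal{A}_{n,j}$ and $\mathcal{A}_{n+1,j}$ (a suitable combination would have a double zero), and then the standard argument with $\mathcal{D}_{n,j,y}(x)=\mathcal{A}_{n+1,j}(y)\mathcal{A}_{n,j}(x)-\mathcal{A}_{n,j}(y)\mathcal{A}_{n+1,j}(x)$, whose derivative at $y$ cannot vanish and keeps a constant sign as $y$ runs over an interval between two consecutive zeros $y_1<y_2$ of $\mathcal{A}_{n+1,j}$, forces a zero of $\mathcal{A}_{n,j}$ in $(y_1,y_2)$, i.e.\ the interlacing. In short: the leverage comes from the fact that linear combinations of the two forms inherit the zero-localization properties of the Nikishin (AT) system, not from plugging test polynomials into a single varying-measure orthogonality, and without that ingredient your contradiction scheme cannot be closed.
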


\begin{proof} First of all notice that the statement is equivalent to proving that the zeros of $\mathcal{A}_{n,j}$ and $\mathcal{A}_{n+1,j}$ in $\mathbb{C} \setminus \Delta_{j+1}$  interlace (recall that $\Delta_{m+1} = \emptyset$). Fix $n\in \mathbb{Z}_+$. Let $A,B \in \mathbb{R}$ be such that $|A| + |B| > 0$ and define the linear forms
\[
	\mathcal{D}_{n,j}(z)=A {\mathcal{A}}_{ n,j}(z)+ B {\mathcal{A}}_{ n+1,j}(z),\qquad j=0,\ldots,m.
	\]
Obviously, $\mathcal{D}_{n,0}(x) = \mathcal{O}(1/z^{n+1})$ and $\mathcal{D}_{n,j}(x) = \mathcal{O}(1/z), j=1,\ldots,m-1$.

\medskip

Arguing with the functions $\mathcal{D}_{n,j}$ as we did with the $\mathcal{A}_{n,j}$ in the proof of Lemma \ref{l2} it is easy to deduce that for each $j=1,\ldots,m$ the function $\mathcal{D}_{n,j}$ has at least $n$ sign changes in $\stackrel{\circ}{\Delta}_{j}$ and at most $n+1$ zeros in $\mathbb{C}\setminus\Delta_{j+1}$. Therefore, all the zeros of $\mathcal{D}_{n,j}$ in $\mathbb{C}\setminus\Delta_{j+1}$ are real and simple.

\medskip

From this statement we can draw the conclusion that $\mathcal{A}_{n,j}$ and $\mathcal{A}_{n+1,j}$ cannot have a common zero. Should such a point $y$ exist the function
\[
	\mathcal{D}_{n,j}(x)={\mathcal{A}}_{ n,j}(x)-\frac{{\mathcal{A}}'_{ n,j}(y)}{{\mathcal{A}}'_{ n+1,j}(y)}{\mathcal{A}}_{n+1,j}(x),
	\]
would have a double zero at $y$ which contradicts the statement above.

\medskip	
	
For each fixed $y \in \mathbb{R} \setminus \Delta_{j+1}$, consider the following linear form
\[
	\mathcal{D}_{n,j,y}(x)= \mathcal{A}_{ n+1,j}(y)\mathcal{A}_{ n,j}(x)-\mathcal{A}_{n,j}(y)\mathcal{A}_{n+1,j}(x),
	\]
Since $\mathcal{D}_{n,j,y}(y)=0$, we have $\mathcal{D}_{n,j,y}^{\prime}(y)\ne 0$. Let $y_1 < y_2$ be two consecutive zeros of $\mathcal{A}_{n+1,j}$ in $\mathbb{R} \setminus \Delta_{j+1}$.
The values $\mathcal{A}_{n,j}(y_1), \mathcal{A}_{n+1,j}'(y_1), \mathcal{A}_{n,j}(y_2), \mathcal{A}_{n+1,j}'(y_2)$ all differ from zero because the zeros are simple and there are no common zeros for consecutive $\mathcal{A}_{n,j}$. Therefore,
\[ \mathcal{D}_{n,j,y_1}'(y_1) = -\mathcal{A}_{n,j}(y_1)\mathcal{A}_{n+1,j}'(y_1) \neq 0, \quad \mathcal{D}_{n,j,y_2}'(y_2) = -\mathcal{A}_{n,j}(y_2)\mathcal{A}_{n+1,j}'(y_2) \neq 0.
\]
But the function $\mathcal{D}_{n,j,y}^{\prime}(y)$ preserves the same sign all along the interval $[y_1,y_2]$. Since $\mathcal{A}_{n+1,j}'$ changes its sign in passing from $y_1$ to $y_2$ so must $\mathcal{A}_{n,j}$ and thus $\mathcal{A}_{n,j}$ must have an intermediate zero between $y_1$ and $y_2$. We are done.
\end{proof}

\subsection{The reversed Nikishin system.} Notice that we can also consider the so called reversed Nikishin system $\mathcal{N}(\sigma_m,\ldots,\sigma_1)$ and with it the corresponding associated ML Hermite Pad\'e approximation. More precisely, for each $k\in \mathbb{N}$ there exist polynomials $b_{k,0},\ldots,b_{k,m}$ such that $\deg b_{k,j} \leq k-1, \deg b_{k,m} \leq k$, not all identically equal to zero, such that
	\begin{align} \mathcal{B}_{k,0}(z):=
		\left(b_{k,0}-b_{k,1}\widehat{s}_{m,m}+b_{k,2}\widehat{s}_{m,m-1}\cdots+ (-1)^{m}b_{k,m}\widehat{s}_{m,1}\right)(z)=\mathcal{O}(1/z^{k+1})\label{dtipoIam}\\
	 \mathcal{B}_{k,1}(z):=	\left(-b_{k,1}+b_{k,2}\widehat{s}_{m-1,m-1} \cdots+ (-1)^{m}b_{k,m}\widehat{s}_{m-1,1}\right)(z)=\mathcal{O}(1/z)\label{dtipoIbm}\\
		........................................................................................\nonumber\\
	 \mathcal{B}_{k,m-1}(z):=	\left((-1)^{m-1}b_{k,m-1}+(-1)^{m}b_{k,m}\widehat{s}_{1,1}\right)(z)=\mathcal{O}(1/z)\label{dtipoIdm}.
	\end{align}
Set $\mathcal{B}_{k,m}(z) = (-1)^m b_{k,m}$.

\medskip

Using what has been proved, for each $j=1,\ldots,m$ the form $\mathcal{B}_{k,j}, j=1,\ldots,m,$ has exactly $k$ zeros in $\mathbb{C} \setminus \Delta_{m-j}, \Delta_0 = \emptyset,$ they are all simple and lie in $\stackrel{\circ}{\Delta}_{m-j +1}$. Accordingly, there exist monic polynomials
${P}_{k,j}, j=1,\ldots,m,$ of degree $k$ whose zeros are the roots of $\mathcal{B}_{k,j}$ in $\Delta_{m-j+1}$, respectively. Normalizing $\mathcal{B}_{k,0}$ so that $\mathcal{B}_{k,m}$ is monic, the polynomial $P_{k,m}$ equals $\mathcal{B}_{k,m}$ and it is the $k$-th biorthogonal polynomial $P_k$ verifying \eqref{biort}. This last statement is the contents of \cite[Theorem 1.5]{LMS} but it readily follows from Lemma \ref{l2}. Lemma \ref{l3} implies that for each $j=1,\ldots,m$ the zeros of $P_{k,j}$ and $P_{k+1,j}$ interlace.

\section{Weak asymptotic.}

Following standard techniques,  the weak asymptotic is derived using arguments from potential theory. Therefore, we will briefly summarize what we need.

\subsection{Preliminaries from potential theory.}

Let $E_k,\, k=1\ldots,m,$ be (not necessarily distinct)
compact subsets of the real line and
\[ \mathcal{C} = (c_{j,k}), \qquad
1\leq j,k \leq m,
\] a real, positive definite, symmetric
matrix of order $m$. $\mathcal{C}$ will be called the
interaction matrix. Let
$\mathcal{M}_1(E_k) $ be the subclass of probability measures in
$\mathcal{M}(E_k).$ Set
\[\mathcal{M}_1= \mathcal{M}_1(E_{1})
\times \cdots \times \mathcal{M}_1(E_{m})  \,.
\]

Given a vector measure $\vec{\mu}=(\mu_{1},\ldots,\,\mu_{m}) \in
\mathcal{M}_1$ and $j= 1,\ldots,m,$ we define the combined
potential
\[W^{\vec{\mu}}_j(x) = \sum_{k=1}^{m} c_{j,k}
V^{\mu_k}(x) \,,
\]
where
\[ V^{\mu_k}(x) = \int \log \frac{1}{|x-t|} \,d\mu_k(t)\,,
\]
denotes the standard logarithmic potential of $\mu_k$. We denote
\[ \omega_j^{\vec{\mu}} = \inf \{W_j^{\vec{\mu}}(x): x \in E_j\} \,, \quad
j=1,\ldots,m\,.
\]

In Chapter 5 of \cite{NiSo}  the authors prove (we state the
result in a form convenient for our purpose)

\begin{lemma} \label{niksor} Assume that the compact sets
$E_k,k=1,\ldots,m,$ are regular. Let $\mathcal{C}$ be a real, positive definite, symmetric
matrix of order $m$. If there exists $\vec{\lambda}  =
(\lambda_{1},\ldots,\lambda_{m})\in
\mathcal{M}_1$ such that for each $j=1,\ldots,m$
\[
W_j^{\vec{\lambda}} (x) = \omega_j^{\vec{\lambda}}\,, \qquad x
\in \supp{\lambda_j}\,,
\]
then $\vec{\lambda}$ is unique. Moreover, if $c_{j,k} \geq 0$
when $E_j \cap E_k \neq \emptyset$, then $\vec{\lambda}$ exists.
\end{lemma}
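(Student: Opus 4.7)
The plan is to base both assertions on the bilinear energy form
\[ I_{\mathcal{C}}(\vec{\mu},\vec{\nu}) := \sum_{j,k=1}^m c_{j,k}\int V^{\mu_k}\,d\nu_j = \sum_{j=1}^m \int W_j^{\vec{\mu}}\,d\nu_j. \]
Because $\mathcal{C}$ is symmetric and positive definite, writing $\mathcal{C} = ODO^T$ with $D$ diagonal and strictly positive expresses $I_{\mathcal{C}}(\vec{\mu},\vec{\mu})$ as a positive linear combination of classical scalar self-energies $I(\eta_i,\eta_i)$ of signed measures $\eta_i=\sum_j O_{ji}\mu_j$. Combined with the classical inequality $I(\eta,\eta)\geq 0$ for compactly supported neutral signed measures of finite energy, this yields $I_{\mathcal{C}}(\vec{\mu},\vec{\mu})\geq 0$ whenever every component of $\vec{\mu}$ has total mass zero, with equality forcing $\vec{\mu}=\vec{0}$.

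For uniqueness I would take two solutions $\vec{\lambda},\vec{\tau}\in\mathcal{M}_1$ and compute
\[ I_{\mathcal{C}}(\vec{\lambda}-\vec{\tau},\vec{\lambda}-\vec{\tau}) = \sum_{j=1}^m \int (W_j^{\vec{\lambda}}-W_j^{\vec{\tau}})\,d(\lambda_j-\tau_j). \]
Since $\omega_j^{\vec{\lambda}}$ is by definition the infimum of $W_j^{\vec{\lambda}}$ over $E_j\supset\supp{\tau_j}$, one has $\int W_j^{\vec{\lambda}}\,d\tau_j\geq \omega_j^{\vec{\lambda}}=\int W_j^{\vec{\lambda}}\,d\lambda_j$, the second equality invoking the hypothesis $W_j^{\vec{\lambda}}\equiv\omega_j^{\vec{\lambda}}$ on $\supp{\lambda_j}$. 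Thus $\int W_j^{\vec{\lambda}}\,d(\lambda_j-\tau_j)\leq 0$; symmetrically $\int W_j^{\vec{\tau}}\,d(\tau_j-\lambda_j)\leq 0$; and summing over $j$ renders the entire bilinear form non-positive. Combined with the non-negativity from the first paragraph, $I_{\mathcal{C}}(\vec{\lambda}-\vec{\tau},\vec{\lambda}-\vec{\tau})=0$, and the strict positivity of $I_{\mathcal{C}}$ on neutral vector measures forces $\vec{\lambda}=\vec{\tau}$.

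For existence I would minimize $J(\vec{\nu})=I_{\mathcal{C}}(\vec{\nu},\vec{\nu})$ over $\mathcal{M}_1$. The sign assumption $c_{j,k}\geq 0$ whenever $E_j\cap E_k\neq\emptyset$ is precisely what keeps $J$ bounded below: on disjoint pairs $I(\nu_k,\nu_j)$ lies in a finite interval depending only on the sets, while on overlapping pairs it is bounded below but possibly $+\infty$, so only there does the sign of the coefficient matter. Weak-$*$ compactness of $\mathcal{M}_1$ yields a subsequential limit $\vec{\lambda}$ of a minimizing sequence, and lower semicontinuity of the logarithmic energy promotes $\vec{\lambda}$ to a minimizer. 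A first-variation argument along $\vec{\lambda}+t(\vec{\nu}-\vec{\lambda})$, $\vec{\nu}\in\mathcal{M}_1$, $t\in[0,1]$, then produces the Frostman-type conditions $W_j^{\vec{\lambda}}\geq\omega_j^{\vec{\lambda}}$ on $E_j$ with equality on $\supp{\lambda_j}$, where regularity of $E_j$ is used to upgrade the usual quasi-everywhere statement to the pointwise equality the lemma demands. The main technical obstacle sits precisely in this existence step: one must cope with potentially $+\infty$ cross-energies on overlapping supports, verifying via truncation of the logarithm or regularization of the measures that the infimum of $J$ is finite and attained by a legitimate $\vec{\lambda}\in\mathcal{M}_1$, and that lower semicontinuity transfers faithfully to the limit even when the limiting value could a priori itself be infinite.
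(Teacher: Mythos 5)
The paper itself does not prove this lemma: it is quoted from Nikishin--Sorokin \cite[Chapter 5]{NiSo}, with a pointer to \cite[Section 4]{bel} for the derivation in exactly this form. So there is no in-paper argument to compare with directly, but your sketch is the standard potential-theoretic route and, in outline, matches what those references do: diagonalize $\mathcal{C}=ODO^T$ to reduce positivity of the vector energy form $I_{\mathcal{C}}$ on neutral vector measures to the classical inequality $I(\eta,\eta)\ge 0$, exploit that positivity for uniqueness, and obtain existence by minimizing $J=I_{\mathcal{C}}(\cdot,\cdot)$ over $\mathcal{M}_1$ and running a Frostman first-variation argument, with regularity of the $E_j$ used to upgrade the quasi-everywhere conditions to pointwise ones.

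The one genuine gap is in the uniqueness step, and it is not merely cosmetic. Writing
\[
I_{\mathcal{C}}(\vec{\lambda}-\vec{\tau},\vec{\lambda}-\vec{\tau})=\sum_{j=1}^m\int\bigl(W_j^{\vec{\lambda}}-W_j^{\vec{\tau}}\bigr)\,d(\lambda_j-\tau_j)
\]
and sandwiching it between $0$ and $0$ requires that every term, and the diagonalized representation $\sum_i d_i I(\eta_i,\eta_i)$, be well defined and finite. That in turn presupposes that the components $\lambda_j,\tau_j$ have finite logarithmic energy, which is neither assumed in the statement nor automatic for $\vec{\lambda}\in\mathcal{M}_1$: if some $\int V^{\lambda_k}\,d\tau_j=+\infty$ appears with a negative coefficient the chain can collapse to $-\infty\le 0$ and you cannot conclude $\vec{\lambda}=\vec{\tau}$. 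You can extract finiteness from the equilibrium identity itself (integrate $W_j^{\vec{\lambda}}\equiv\omega_j^{\vec{\lambda}}$ against $\lambda_j$, use $c_{j,j}>0$ and the boundedness of mutual energies across disjoint compacts), but this cleanly works only when the possibly infinite cross-energies carry nonnegative coefficients, i.e.\ under the very sign hypothesis you only invoke for existence. For the abstract uniqueness claim you must either add a finite-energy hypothesis, or argue as Nikishin--Sorokin do by truncation/regularization of the kernel; as written, the argument does not yet close. A smaller point: the classical inequality $I(\eta,\eta)\ge 0$ you invoke also needs the signed measure $\eta$ to have finite energy, so the same caveat feeds into the positivity claim itself.
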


\medskip

For details on how Lemma \ref{niksor} is derived from  \cite[Chapter
5]{NiSo} see  \cite[Section 4]{bel}. The vector measure $\vec{\lambda}$
is called the equilibrium solution for the vector potential problem determined by the
interaction matrix $\mathcal{C}$ on the system of compact sets
$E_j\,, j = 1,\ldots,m$ and $\omega^{\vec{\lambda}}:= (\omega_1^{\vec{\lambda}},\ldots,\omega_m^{\vec{\lambda}})$ is the vector equilibrium constant. There are other characterizations of the equilibrium measure and constant but we will not dwell into that because they will not be used and their formulation requires introducing additional notions and notation.

\medskip

In the proof of the asymptotic zero distribution of the polynomials $Q_{n,j}$ we take $E_j = \supp{\sigma_j}$.   The interaction matrix is the typical one for problems involving  Nikishin systems. Namely,
\[
\mathcal{C}_{\mathcal{N}}=\left(\begin{array}{r r r r r r}
 1      & -1/2 &  0 & \cdots & 0 & 0 \\
-1/2      &  1 & -1/2 & \cdots & 0 & 0 \\
 0      & -1/2 &  1 & \cdots & 0 & 0 \\
 \vdots &    &    & \ddots &   & \vdots \\
 0      &  0 &  0 & \cdots & 1 & -1/2 \\
 0      &  0 &  0 & \cdots &-1/2 &  1
\end{array}
\right)_{m\times m}
\]
which is a real, symmetric, positive definite matrix with positive diagonal elements.
All the assumptions of Lemma \ref{niksor} are in place and the existence of a unique vector equilibrium measure on the   system of sets $E_j, j=1,\ldots,m$ is guaranteed.

\medskip

We also need

\begin{lemma} \label{lemextremal}
Let $E \subset \mathbb{R}$ be a regular compact set and $\phi$ a continuous function
on $E$. Then, there exists a unique $\lambda \in
\mathcal{M}_1(E)$ and a constant $w$ such that
\[
V^{\lambda}(z)+\phi(z) \left\{ \begin{array}{l} \leq
w,\quad z
\in \supp{\lambda} \,, \\
\geq w, \quad z \in  E\,.
\end{array} \right.
\]
\end{lemma}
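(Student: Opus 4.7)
The plan is to realize the pair $(\lambda, w)$ as the minimizer, together with the associated modified Robin constant, of the weighted logarithmic energy
\[
I_\phi(\mu) := \iint \log\frac{1}{|x-t|}\,d\mu(x)\,d\mu(t) + 2\int \phi(x)\,d\mu(x), \qquad \mu \in \mathcal{M}_1(E).
\]
Regularity of $E$ implies it has positive capacity, so $I_\phi$ is finite somewhere on $\mathcal{M}_1(E)$; continuity of $\phi$ on the compact set $E$ bounds the linear term, while the double-integral part is bounded below and weak$^*$ lower semicontinuous. Since $\mathcal{M}_1(E)$ is weak$^*$ compact, a minimizer $\lambda$ exists, and I would take $w := I_\phi(\lambda) - \int \phi\, d\lambda$, which will turn out to be the common value of $V^\lambda + \phi$ attained on $\supp{\lambda}$.

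Uniqueness follows from the parallelogram identity
\[
I_\phi\!\left(\tfrac{\lambda_1+\lambda_2}{2}\right) = \tfrac{1}{2}\bigl(I_\phi(\lambda_1)+I_\phi(\lambda_2)\bigr) - \tfrac{1}{4}\, I(\lambda_1-\lambda_2),
\]
together with the classical fact that the logarithmic energy $I(\nu)$ is strictly positive for every nontrivial compactly supported signed measure $\nu$ with $\nu(\mathbb{R})=0$ and finite energy; two distinct minimizers would contradict minimality. The variational inequalities then come from the usual perturbation $\lambda + \varepsilon(\nu-\lambda)$, $\nu \in \mathcal{M}_1(E)$: computing $\frac{d}{d\varepsilon} I_\phi\bigl(\lambda + \varepsilon(\nu-\lambda)\bigr)\bigm|_{\varepsilon=0^+} \geq 0$ yields $\int (V^\lambda+\phi)\, d\nu \geq \int (V^\lambda+\phi)\, d\lambda = w$. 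Specializing $\nu$ to the normalized equilibrium measure of the set $\{V^\lambda+\phi < w-\delta\}$ produces a contradiction for every $\delta > 0$, giving the lower bound $V^\lambda + \phi \geq w$ quasi-everywhere on $E$. The complementary inequality $V^\lambda+\phi \leq w$ on $\supp{\lambda}$ then follows from lower semicontinuity: at a hypothetical point $x_0 \in \supp{\lambda}$ with $V^\lambda(x_0)+\phi(x_0) > w$, the lsc property forces the strict inequality to persist on a neighborhood of $x_0$ that carries positive $\lambda$-mass, contradicting $\int (V^\lambda+\phi)\, d\lambda = w$ (here one uses that $\lambda$ has finite energy, hence assigns no mass to polar sets, so the q.e.\ lower bound holds $\lambda$-a.e.).

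The main obstacle is upgrading the inequality $V^\lambda + \phi \geq w$ from quasi-everywhere to every point of $E$, and this is exactly where the regularity of $E$ is decisive. Regularity provides that the Green function of $\mathbb{C}\setminus E$ with pole at infinity extends continuously to $E$, which via the continuity principle for logarithmic potentials propagates to continuity of $V^\lambda$ on $E$; combined with continuity of $\phi$, the function $V^\lambda + \phi$ is continuous on $E$. Then any point where the lower bound fails has a whole neighborhood in $E$ on which it fails, contradicting the already established quasi-everywhere inequality, so the bound in fact holds everywhere on $E$. Verifying this continuity upgrade rigorously (the q.e.-to-everywhere promotion) is the delicate technical point of the argument; the rest of the proof is a standard run-through of the weighted energy minimization machinery.
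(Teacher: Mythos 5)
The paper itself does not prove this lemma: it simply cites Saff--Totik, Theorem I.1.3, for existence, uniqueness and the Frostman-type inequalities up to a set of zero capacity, and Theorem I.4.8 for the passage from quasi-everywhere to everywhere on a regular set. So you are reconstructing the cited proof, and most of your reconstruction is the standard weighted-energy machinery and is fine: existence of a minimizer by weak-star compactness and lower semicontinuity, the identification $w=I_\phi(\lambda)-\int\phi\,d\lambda$, uniqueness of the minimizer via the parallelogram identity and strict positivity of $I(\nu)$ for neutral finite-energy signed measures, the variational inequality $\int(V^\lambda+\phi)\,d\nu\ge w$, the quasi-everywhere lower bound obtained from sublevel sets of positive capacity, and the upper bound on $\supp{\lambda}$ by lower semicontinuity.

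The gap is exactly at the step you yourself call decisive. Your promotion of $V^\lambda+\phi\ge w$ from quasi-everywhere to all of $E$ rests on the claim that regularity of $E$ yields continuity of $V^\lambda$ on $E$ ``via the continuity principle''. This is circular: the continuity principle (Maria's theorem) requires as input the continuity of $V^\lambda$ restricted to $\supp{\lambda}$, and at this stage all you know on $\supp{\lambda}$ is $V^\lambda+\phi\le w$ everywhere and $=w$ quasi-everywhere; nothing yet excludes a polar subset of $\supp{\lambda}$ on which $V^\lambda+\phi<w$, and at any such point $V^\lambda|_{\supp{\lambda}}$ would be discontinuous, so the very continuity you invoke is equivalent to the everywhere-inequality you are trying to prove. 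Moreover, continuity of the Green function of $\mathbb{C}\setminus E$ is continuity of the potential of the \emph{unweighted} equilibrium measure of $E$; no principle transfers continuity from one measure's potential to that of the different measure $\lambda$. In Saff--Totik the implication runs the other way: one first proves that at each point $x_0$ of a regular set the q.e.\ inequality becomes pointwise --- regularity means (Wiener) that $E$ is non-thin at $x_0$, removing the polar exceptional set keeps it non-thin, and $V^\lambda$, being superharmonic, is finely continuous, so there are points $x\in E$ off the exceptional set arbitrarily close to $x_0$ with $V^\lambda(x)$ arbitrarily close to $V^\lambda(x_0)$, whence $V^\lambda(x_0)+\phi(x_0)\ge w$ by continuity of $\phi$ --- and only afterwards deduces continuity of the weighted equilibrium potential. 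Replace your continuity paragraph by this non-thinness/fine-continuity argument, or simply cite Theorem I.4.8 of Saff--Totik as the paper does. A secondary omission: the parallelogram identity gives uniqueness among energy minimizers, while the lemma asserts uniqueness of any pair $(\lambda,w)$ satisfying the two inequalities; you still need the standard converse that any such pair minimizes, e.g.\ from $I_\phi(\nu)-I_\phi(\lambda)=I(\nu-\lambda)+2\int(V^\lambda+\phi)\,d(\nu-\lambda)\ge 0$ for every finite-energy $\nu\in\mathcal{M}_1(E)$.
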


In particular, equality takes place on all $\supp{\lambda}$.
If the compact set $E$ is not regular with respect to the
Dirichlet problem, the second part of the statement is true
except on a set $e$ such that $\mbox{cap}(e) =0.$ Theorem I.1.3 in
\cite{ST} contains a proof of this lemma in this context. When $E$
is regular, it is well known that this inequality except on a set
of capacity zero implies the inequality for all points in the set
(cf. Theorem I.4.8 from \cite{ST}). $\lambda$ is called the
equilibrium measure in the presence
of the external field $\phi$ on $E$ and $w$ is  the equilibrium
constant.

\medskip

One last ingredient in the proof of the asymptotic zero distribution of the polynomials $Q_{n,j}$ is provided by the following  lemma. Different
versions of it appear in \cite{gora},  and
\cite{stto}. In \cite{gora}, it was proved assuming that
$\supp{\sigma}$ is an interval on which $\sigma'
> 0$ a.e.  Theorem 3.3.3 in
\cite{stto}  does not cover the type of
external field we need to consider. As stated here, the proof appears in \cite[Lemma 4.2]{FLLS}.

\begin{lemma}\label{gonchar-rakhmanov}
Assume that $\sigma \in \mbox{\bf Reg}$ and $\supp{\sigma} \subset
\mathbb{R}$  is regular.   Let $\{\phi_n\}, n \in \Lambda \subset
\mathbb{Z}_+,$ be a sequence of positive continuous functions on
$\supp{\sigma}$ such that
\begin{equation} \label{eq:phi}
\lim_{n\in \Lambda}\frac{1}{2n}\log\frac{1}{|\phi_n(x)|}= \phi(x)
> -\infty ,
\end{equation}
uniformly on $\supp{\sigma}$. Let  $\{q_n\}, n \in \Lambda,$ be
a sequence of monic polynomials such that $\deg q_n = n$ and
\[
\int x^k q_n(x)\phi_n(x)d\sigma(x)=0,\qquad k=0,\ldots, n-1.
\]
Then
\begin{equation} \label{eq:18}
*\lim_{n \in \Lambda}\mu_{q_n} = \lambda,
\end{equation}
and
\begin{equation} \label{eq:19}
\lim_{n\in \Lambda}\left(\int |q_n(x)|^2\phi_n(x)
d\sigma(x)\right)^{1/{2n}}= e^{-w},
\end{equation}
where $\lambda$ and $w$ are the  equilibrium measure and
equilibrium constant in the presence of the external field $\phi$
on $\supp{\sigma}$. We also have
\begin{equation} \label{eq:H}
\lim_{n \in \Lambda} \left(\frac{|q_n(z)|}{\|q_n
\phi_n^{1/2}\|_E}\right)^{1/n} = \exp{(w -
V^{\lambda}(z))}, \qquad  \mathcal{K} \subset  \mathbb{C} \setminus \Delta,
 \end{equation}
 where $\|\cdot\|_E$ denotes the uniform norm on $E$ and $\Delta$ is the smallest interval containing $\supp{\sigma}$.
\end{lemma}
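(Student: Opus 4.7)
The plan is to follow the classical Gonchar--Rakhmanov--Stahl--Totik approach in the presence of a varying weight. First I would exploit the extremal characterization implicit in the orthogonality: since $q_n$ is monic of degree $n$ and orthogonal to all polynomials of lower degree with respect to $\phi_n\,d\sigma$, it minimizes $\int |p|^2 \phi_n\,d\sigma$ over monic $p$ of degree $n$. Denote $I_n := \int |q_n|^2 \phi_n\,d\sigma$ and $M_n := \|q_n\phi_n^{1/2}\|_{\supp \sigma}$. The whole argument reduces to showing
\[
\lim_{n\in\Lambda} I_n^{1/(2n)} = e^{-w},
\qquad
*\lim_{n\in\Lambda}\mu_{q_n} = \lambda,
\]
with $(\lambda,w)$ as in Lemma \ref{lemextremal} for the external field $\phi$ on $\supp\sigma$.

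To get the upper bound $\limsup I_n^{1/(2n)} \le e^{-w}$, I would plug into the extremal problem monic test polynomials $p_n$ whose zero-counting measures converge weak-$*$ to $\lambda$. Then $|p_n(x)|^{1/n}\to e^{-V^\lambda(x)}$ (uniformly on $\supp\sigma$, using regularity of $\supp\sigma$ to upgrade q.e. convergence), and combined with the hypothesis $\frac{1}{2n}\log\phi_n^{-1}\to \phi$ this yields $\limsup I_n^{1/(2n)}\le e^{-\min_{\supp\lambda}(V^\lambda+\phi)}=e^{-w}$. The matching lower bound uses the hypothesis $\sigma\in\textbf{Reg}$: by the $n$-th root asymptotic of the leading coefficients of the orthonormal polynomials associated to $\sigma$ together with a Chebyshev-type estimate, one controls $\|q_n\|_{\supp\sigma}$ and $M_n$ in terms of $I_n^{1/2}$ and $\operatorname{cap}(\supp\sigma)$, and then a standard comparison with the equilibrium potential gives $\liminf I_n^{1/(2n)}\ge e^{-w}$.

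With the norm asymptotic in hand, extract any weak-$*$ limit point $\nu$ of $(\mu_{q_n})$. The principle of descent applied to $\frac{1}{n}\log|q_n(z)|$ gives $\liminf \frac{1}{n}\log|q_n(z)|\ge -V^\nu(z)$, and combining this with $I_n^{1/(2n)}\to e^{-w}$ shows $V^\nu + \phi \ge w$ on $\supp\sigma$ except possibly on a polar set; regularity of $\supp\sigma$ promotes this to every point of $\supp\sigma$. On the other hand, from $M_n^{1/n}$ one deduces $V^\nu + \phi \le w$ on $\supp\nu$. By the uniqueness part of Lemma \ref{lemextremal}, $\nu=\lambda$, and since every subsequential limit coincides with $\lambda$, the full limit relation \eqref{eq:18} holds. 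Finally, once weak-$*$ convergence of $\mu_{q_n}$ and the norm asymptotic \eqref{eq:19} are established, the pointwise estimate \eqref{eq:H} follows by writing $\frac{1}{n}\log|q_n(z)| = -\int\log|z-t|\,d\mu_{q_n}(t)$, letting $n\to\infty$ off $\Delta$ where $\log|z-t|$ is continuous in $t$, and subtracting $\frac{1}{n}\log M_n\to -w$.

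The main obstacle is coupling the varying external weight $\phi_n$ to the regularity of $\sigma$ in the lower bound for $I_n^{1/(2n)}$: the Turan/regularity hypothesis controls leading coefficients only for the \emph{fixed} measure $\sigma$, so one must transfer this control to the varying measure $\phi_n\,d\sigma$. This is handled by approximating $\phi$ uniformly on $\supp\sigma$ by continuous functions of the form $-\frac{1}{k}\log|T_k|$ for polynomials $T_k$ (possible by regularity of $\supp\sigma$), incorporating $T_k$ into the polynomial part, and then applying the fixed-measure regularity.
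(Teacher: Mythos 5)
The paper itself offers no proof of this lemma — it is quoted verbatim with the proof attributed to \cite{FLLS} (Lemma 4.2) — and your outline reproduces exactly the classical Gonchar--Rakhmanov/Stahl--Totik scheme used there: the $L^2$ extremal property, an upper bound via weighted (Fekete-type) test polynomials, a lower bound obtained by transferring the regularity of $\sigma$ to the varying weight (your $-\frac1k\log|T_k|$ approximation is the standard device for this), then the principle of descent plus the uniqueness statement of Lemma \ref{lemextremal} to identify every weak-star limit with $\lambda$, and finally \eqref{eq:H} from the sup-norm asymptotics. The one claim to tighten is the parenthetical assertion that $|p_n(x)|^{1/n}\to e^{-V^{\lambda}(x)}$ uniformly on $\supp{\sigma}$: two-sided uniform convergence cannot hold near zeros of $p_n$ lying on $\supp{\lambda}$, and what the upper bound actually requires (and what continuity of $V^{\lambda}$ on the regular set together with a Totik-type discretization or weighted Fekete points provides) is only the one-sided estimate $\limsup_{n}\sup_{x\in \supp{\sigma}}\bigl(\tfrac1n\log|p_n(x)|+V^{\lambda}(x)\bigr)\le 0$.
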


\subsection{Weak asymptotic and some consequences.} We are ready for the proof of the asymptotic zero distribution of the polynomials $(Q_{n,j})$.

 \begin{theorem} \label{teo4} Assume that $\sigma_j \in \mbox{\bf Reg}$ and $\supp{\sigma_j} = E_j$ is regular for each $j=1,\ldots,m$. Then,
\begin{equation} \label{weak}
*\lim_n \mu_{Q_{n,j}} =  {\lambda}_{j}, \qquad j=1,\ldots,m.
\end{equation}
where $\vec{\lambda} = (\lambda_1,\ldots,\lambda_m) \in
\mathcal{M}_1$ is the vector equilibrium measure determined by the
matrix $\mathcal{C}_{\mathcal{N}}$ on the system of compact
sets $E_j, j=1,\ldots,m$.  Moreover,
\begin{equation} \label{eq:4*}
\lim_n \left|\int  {Q_{n,j}^2(x)}\frac{\mathcal{H}_{n,j}(x)\,{\rm d}\sigma_j(x)}{Q_{n,j-1}(x)Q_{n,j+1}(x)} \right|^{1/2n} = \exp\left(-\sum_{k=j}^{m}
{\omega_k^{\vec{\lambda}}} \right)\,,
\end{equation}
where  $\omega^{\vec{\lambda}} = (\omega_1^{\vec{\lambda}},\ldots,\omega_m^{\vec{\lambda}})$ is the vector equilibrium constant.
\end{theorem}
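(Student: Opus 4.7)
The plan is to show that any weak-$*$ accumulation point $\vec\mu = (\mu_1,\ldots,\mu_m)$ of the sequences $(\mu_{Q_{n,j}})$ satisfies the Nikishin equilibrium system from Lemma \ref{niksor} with interaction matrix $\mathcal{C}_{\mathcal{N}}$ on $(E_1,\ldots,E_m)$, so that uniqueness forces convergence of the full sequence. By weak-$*$ compactness, I first extract a subsequence $\Lambda$ along which $\mu_{Q_{n,j}} \overset{*}{\to} \mu_j \in \mathcal{M}_1(E_j)$ simultaneously for every $j$. Because consecutive $E_j$ are disjoint, continuity of $\log|x-t|$ on the compact product $E_j \times E_k$ (for $k \neq j$) combined with weak-$*$ convergence yields
\[
\lim_{n \in \Lambda} \tfrac{1}{n}\log|Q_{n,k}(x)| = -V^{\mu_k}(x)
\]
uniformly on $E_j$ for $k = j \pm 1$, and locally uniformly off $E_k$ in general.

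The core is a downward induction on $j$ from $m$ to $1$, alternating two operations. At the start of step $j$ I already know a continuous function $h_j$ on $E_j$ with $h_j(x) = \lim_{n\in\Lambda}\tfrac{1}{2n}\log|\mathcal{H}_{n,j}(x)|$ (base case $h_m \equiv 0$ from $\mathcal{H}_{n,m} \equiv 1$). First, I apply Lemma \ref{gonchar-rakhmanov} to $Q_{n,j}$ using the orthogonality \eqref{int3} against the varying weight $\phi_{n,j} := \mathcal{H}_{n,j}/(Q_{n,j-1}Q_{n,j+1})$ (with $Q_{n,0} \equiv Q_{n,m+1} \equiv 1$), which has constant sign on $E_j$ by the Remark after Lemma \ref{l4}; replacing $\phi_{n,j}$ by $|\phi_{n,j}|$ if necessary, the external field is
\[
\psi_j(x) = -\tfrac12 V^{\mu_{j-1}}(x) - \tfrac12 V^{\mu_{j+1}}(x) - h_j(x),
\]
and Lemma \ref{gonchar-rakhmanov} yields both $V^{\mu_j}(x) + \psi_j(x) = w_j$ on $\supp\mu_j$ and the norm asymptotic $\int Q_{n,j}^2|\phi_{n,j}|\,d\sigma_j = e^{-2n w_j(1+o(1))}$. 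Second, I feed this norm asymptotic back into \eqref{int4} to compute $h_{j-1}$: for $z \in E_{j-1}$, disjointness $E_{j-1} \cap E_j = \emptyset$ makes $z-t$ of one sign in $t \in E_j$, and combined with the constant sign of $\phi_{n,j}$ the integrand of
\[
\mathcal{H}_{n,j-1}(z) = \int_{E_j}\frac{Q_{n,j}^2(t)}{z-t}\phi_{n,j}(t)\,d\sigma_j(t)
\]
has constant sign. Hence $|\mathcal{H}_{n,j-1}(z)|$ is sandwiched between $\mathrm{dist}(z,E_j)^{-1}$ and $\max_{t\in E_j}|z-t|^{-1}$ times $\int Q_{n,j}^2|\phi_{n,j}|\,d\sigma_j$, so $h_{j-1}(z) = -w_j$ uniformly on $E_{j-1}$. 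Being constant, $h_{j-1}$ is trivially continuous, and the induction proceeds.

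Assembling the equilibrium equations obtained at each step (with the conventions $V^{\mu_0} \equiv V^{\mu_{m+1}} \equiv 0$ and $w_{m+1} := 0$) yields
\[
V^{\mu_j}(x) - \tfrac12 V^{\mu_{j-1}}(x) - \tfrac12 V^{\mu_{j+1}}(x) = w_j - w_{j+1} \quad \text{on } \supp\mu_j, \quad j=1,\ldots,m,
\]
which is exactly the Nikishin equilibrium system with $\omega_j^{\vec\mu} = w_j - w_{j+1}$. Uniqueness in Lemma \ref{niksor} forces $\vec\mu = \vec\lambda$ and $w_j = \sum_{k=j}^m \omega_k^{\vec\lambda}$, so \eqref{weak} follows from the arbitrariness of $\Lambda$; \eqref{eq:4*} is then the norm asymptotic \eqref{eq:19} read off with this value of $w_j$. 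I expect the main technical point to be verifying, at each invocation of Lemma \ref{gonchar-rakhmanov}, the uniform convergence of $\tfrac{1}{2n}\log(1/|\phi_{n,j}|)$ to $\psi_j$ on $E_j$; but the disjointness of consecutive supports and the constancy of the $h_j$ established in the induction make each such verification routine.
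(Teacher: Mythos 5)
Your proposal is correct and follows essentially the same route as the paper: extract a subsequence along which all zero-counting measures converge, run a downward induction in which Lemma \ref{gonchar-rakhmanov} is applied to the orthogonality \eqref{int3} with the varying weight $\mathcal{H}_{n,j}/(Q_{n,j-1}Q_{n,j+1})$, use the sandwich estimate on the Cauchy-type integral in \eqref{int4} to transfer the norm asymptotic into the uniform limit of $\tfrac1{2n}\log|\mathcal{H}_{n,j-1}|$ on the next interval, and conclude by the uniqueness part of Lemma \ref{niksor} plus the telescoping of the constants. The only difference is bookkeeping (you carry the constant limits $h_j$ of the forms $\mathcal{H}_{n,j}$ explicitly, while the paper carries the norm asymptotic \eqref{eq:4**} and performs the sandwich inside each induction step), which is the same argument.
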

\begin{proof} The unit ball in the cone of positive Borel measures
is weak star compact; therefore, it is sufficient to show that
each one of the sequences of measures $\{\mu_{Q_{n,j}}\}$,
$n\in\mathbb{Z}_+$, $j=1,\ldots,m,$ has only one
accumulation point which coincides with the corresponding
component of the vector equilibrium measure $\vec{\lambda}$ determined by the
matrix $\mathcal{C}_{\mathcal{N}}$ on the system of compact
sets $E_j, j=1,\ldots,m$.

\medskip

Let
$\Lambda\subset \mathbb{Z}_+$ be  such
that for each $j=1,\ldots,m$
\[
*\lim_{n\in\Lambda}\mu_{Q_{n,j}}=\mu_j.
\]
Notice that $\mu_j\in\mathcal{M}_1(E_j)$, $j=1,\ldots,m$. Taking into account that all the zeros of $Q_{n,j}$ lie in $\Delta_j$, it follows that
\begin{equation}\label{conv-Qnj}
\lim_{n\in\Lambda}|Q_{n,j}(z)|^{1/n}=\exp(- V^{\mu_j}(z)),
\end{equation}
uniformly on compact subsets of $\mathbb{C} \setminus\Delta_j$.

\medskip

Because of the normalization adopted, $(-1)^m a_{n,m} = \mathcal{A}_{n,m} = Q_{n,m}$; consequently, when
$j=m-1$, \eqref{int3} takes the form
\[ \int x^{\nu} Q_{n,m}(x) \frac{{\rm d} \sigma_{m}(x)}{|Q_{n,m-1}(x)|} = 0\,, \qquad \nu=0,\ldots,n-1.
\]
According to (\ref{conv-Qnj})
\[ \lim_{n \in \Lambda}\frac{1}{2n}\log|Q_{n,m-1}(x)| =
-\frac{1}{2 } V^{\mu_{m-1}}(x)\,,
\]
uniformly on $\Delta_{m}$. Using Lemma \ref{gonchar-rakhmanov},
it follows that $\mu_{m}$ is the unique solution of the extremal
problem
\begin{equation} \label{eq:1}
V^{\mu_{m}}(x) - \frac{1}{2}V^{\mu_{m-1}}(x)
\left\{
\begin{array}{l} = \omega_{m},\quad x
\in \supp {\mu_{m}} \,, \\
\geq \omega_{m}, \quad x \in E_{m} \,,
\end{array} \right.
\end{equation}
and
\begin{equation} \label{eq:2}
\lim_{n \in \Lambda} \left(\int \frac{Q_{n,m}^2(x)}{|Q_{n,m-1}(x)|}{\rm d}\sigma_{m}(x)\right)^{1/2n} =
e^{-\omega_{m}}\,.
\end{equation}

Using induction on decreasing values of $j$, let us show that for all
$j = 1,\ldots,m$
\begin{equation} \label{eq:3*}
V^{\mu_j}(x) - \frac{1}{2 }V^{\mu_{j-1}}(x) -
\frac{1}{2 }V^{\mu_{j+1}}(x) +
 \omega_{j+1} \left\{
\begin{array}{l} = \omega_j,\quad x
\in \supp{\mu_j} \,, \\
\geq \omega_j, \quad x \in E_j \,,
\end{array} \right.
\end{equation}
where $V^{\mu_0} \equiv V^{\mu_{m+1}} \equiv 0, \omega_{m+1} =0$, and
\begin{equation} \label{eq:4**}
\lim_{n \in \Lambda} \left(\int  {Q_{n,j}^2(x)} \frac{|\mathcal{H}_{n,j}(x)| {\rm d}\sigma_j(x)}{|Q_{n,j-1}(x)Q_{n,j+1}(x)|} \right)^{1/2n} = e^{-\omega_j}\,,
\end{equation}
where $Q_{n,0} \equiv Q_{n,m+1} \equiv 1$. For $j = m$ these relations
are non other than (\ref{eq:1})-(\ref{eq:2}) and the initial
induction step is settled. Let us assume that the statement is
true for $j+1 \in \{2,\ldots,m\}$ and let us prove it for
$j$.

\medskip

For $j=1,\ldots, m$, the orthogonality relations  \eqref{int3} can be expressed as
\[ \int x^{\nu} Q_{n,j}(x)  \frac{|\mathcal{H}_{n,j}(x)| {\rm d}\sigma_j(x)}{|Q_{n,j-1}(x)Q_{n,j+1}(x)|} = 0\,, \qquad \nu=0,\ldots,n-1\,,
\]
and using \eqref{int4} it follows that
\[ \int x^{\nu} Q_{n,j}(x) \left( \int \frac{Q_{n,j+1}^2(t)}{|x-t|} \frac{|\mathcal{H}_{n,j+1}(t)|{\rm d}\sigma_{j+1}(t)}{ |Q_{n,j}(t)Q_{n,j+2}(t)| }\right)\frac{{\rm d}\sigma_j(x)}{|Q_{n,j-1}(x)Q_{n,j+1}(x)|} = 0\,,
\]
for $\nu=0,\ldots,n-1\,.$

\medskip

Relation (\ref{conv-Qnj}) implies that
\begin{equation}\label{eq:5}
\lim_{n \in \Lambda} \frac{1}{2n}\log|Q_{n,j-1}(x)Q_{n,j+1}(x)| = -
\frac{1}{2 }V^{\mu_{j-1}}(x) -
\frac{1}{2 }V^{\mu_{j+1}}(x)\,,
\end{equation}
uniformly on $\Delta_j.$ (Since $Q_{n,0}\equiv 1$, when
$j=1$ we only get the second term on the right hand side of
this limit.)

\medskip

Set
\begin{equation} \label{Knj} K_{n,j+1} := \left(\int  {Q_{n,j+1}^2(t)}  \frac{ |{\mathcal{H}}_{n,j+1}(t)|{\rm d}\sigma_{j+1}(t)}{|Q_{n,j}(t)Q_{n,j+2}(t)|}
 \right)^{-1/2}.
\end{equation}
It follows that for $x \in \Delta_j$
\[ \frac{1}{\delta_{j+1}^*K_{n,j+1}^2} \leq \int \frac{Q_{n,j+1}^2(t)}{|x-t|}\frac{ |{\mathcal{H}}_{n,j+1}(t)|{\rm d}\sigma_{j+1}(t)}{|Q_{n,j}(t)Q_{n,j+2}(t)|}
\leq \frac{1}{\delta_{j+1}K_{n,j+1}^2},
\]
where $0 < \delta_{j+1} = \inf\{|x-t|: t \in \Delta_{j+1}, x \in
\Delta_j\} \leq \max\{|x-t|: t \in \Delta_{j+1}, x \in \Delta_j\}
= \delta_{j+1}^* < \infty.$ Taking into consideration these
inequalities, from the induction hypothesis, we obtain that
\begin{equation} \label{eq:6}
\lim_{n \in \Lambda} \left(\int \frac{Q_{n,j+1}^2(t)}{|x-t|}\frac{ |{\mathcal{H}}_{n,j+1}(t)|{\rm d}\sigma_{j+1}(t)}{|Q_{n,j}(t)Q_{n,j+2}(t)|}\right)^{1/2n} = e^{- \omega_{j+1} }.
\end{equation}

\medskip

Taking (\ref{eq:5}) and (\ref{eq:6}) into account, Lemma
\ref{gonchar-rakhmanov} yields that $\mu_j$ is the unique solution
of the extremal problem (\ref{eq:3*}) and
\[  \lim_{n \in \Lambda} \left(\int   \int \frac{Q_{n,j+1}^2(t)}{|x-t|}\frac{ |{\mathcal{H}}_{n,j+1}(t)|{\rm d}\sigma_{j+1}(t)}{|Q_{n,j}(t)Q_{n,j+2}(t)|}
\frac{Q_{n,j}^2(x){\rm d}\sigma_j(x)}{|Q_{n,j-1}(x)Q_{n,j+1}(x)|}\right)^{1/2n} = e^{-\omega_j}.
\]
Using \eqref{int4} the previous formula reduces to \eqref{eq:4}. We have concluded the induction.

\medskip

Now, we can rewrite (\ref{eq:3*}) as
\begin{equation} \label{eq:a1}
 V^{\mu_j}(x) - \frac{1}{2}V^{\mu_{j-1}}(x) -
\frac{1}{2 }V^{\mu_{j+1}}(x)  \left\{
\begin{array}{l} = \omega_j',\quad x
\in \supp{\mu_j} \,, \\
\geq \omega_j', \quad x \in E_j \,,
\end{array} \right.
\end{equation}
for $j=1,\ldots,m$, where
\begin{equation}\label{eq:c}
\omega_j' =  \omega_j -  \omega_{j+1}, \qquad (\omega_{m+1} = 0).
\end{equation}
(Recall that the terms with $V^{\mu_0}$ and $V^{\mu_{m+1}}$ do not appear when $j=0$ and $j=m$, respectively.)
By Lemma \ref{niksor},
$\vec{\lambda} = (\mu_{1},\ldots,\mu_{m})$ is the equilibrium solution for the vector potential problem determined by the
interaction matrix $\mathcal{C}_{\mathcal{N}}$ on the system of compact sets
$E_j\,, j = 1,\ldots,m$ and $\omega^{\vec{\lambda}} = (\omega_{1}',\ldots,\omega_{m}') $  is the corresponding vector equilibrium constant.  This is
for any convergent subsequence; since the equilibrium problem does not depend on the sequence of indices $\Lambda$  and the solution is unique we  obtain the limits in \eqref{weak}.

\medskip

From the uniqueness  of the vector equilibrium constant and  (\ref{eq:4**}), we have
\[
\lim_{n \to \infty} \left(\int  {Q_{n,j}^2(x)} \frac{|\mathcal{H}_{n,j}(x)|\, {\rm d}\sigma_j(x)}{|Q_{n,j-1}(x)Q_{n,j+1}(x)|} \right)^{1/2n} = e^{-\omega_j}\,,
\]
On the other hand, from (\ref{eq:c}) it follows that
$\omega_{m} = \omega_{m}^{\vec{\lambda}}$ when
$j=m.$  Suppose that $ \omega_{j+1} = \sum_{k=j+1}^{m}
 {\omega_{k}^{\vec{\lambda}}}$ where $j+1 \in
\{2,\ldots,m\}$. Then, according to (\ref{eq:c})
\[ \omega_j = {\omega_j^{\vec{\lambda}}}  +
 \omega_{j+1} = \sum_{k=j}^{m}
 {\omega_{k}^{\vec{\lambda}}}
\]
and (\ref{eq:4*}) immediately follows.
\end{proof}

\begin{theorem}\label{general}
Assume that $(\sigma_1,\ldots,\sigma_m) \in \mbox{\bf Reg}$ and $\supp{\sigma_j} = E_j, j=1,\ldots,m$ is regular.
Then, for each $j=0,\ldots,m$
\begin{equation}\label{conver1}
\lim_{n \to \infty} |\mathcal{A}_{n,j}(z)|^{1/n}=A_j(z), \qquad {\mathcal{K}} \subset \mathbb{C}\setminus
(\Delta_j \cup \Delta_{j+1})
\end{equation}
$( \Delta_{m +1} = \Delta_0 = \emptyset )$, where
\[
A_j(z)=\exp \left(  V^{ {\lambda}_{j+1}}(z)-
V^{ {\lambda}_j}(z) - 2\sum_{k=j+1}^{m }
{\omega_k^{\vec{\lambda}}} \right),\quad
j=1,\ldots,m-1,
\]
and
\[ A_0(z)=\exp \left(  V^{ {\lambda}_{1}}(z) - 2\sum_{k=1}^{m }
{\omega_k^{\vec{\lambda}}} \right),
\qquad A_{m}(z)=\exp\left(- V^{ {\lambda}_{m}}(z)\right).
\]
$\vec{\lambda} =
( {\lambda}_{1},\ldots, {\lambda}_{m})$ is  the
vector equilibrium measure   and
$(\omega_{1}^{\vec{\lambda}},\ldots,\omega_{m}^{\vec{\lambda}})$
is the vector equilibrium constant for the vector potential
problem determined by the interaction matrix $\mathcal{C}_{\mathcal{N}}$ acting on the system of compact sets $E_j, j=1,\ldots,m$.
\end{theorem}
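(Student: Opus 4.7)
The plan is to factor $\mathcal{A}_{n,j}$ into pieces whose $n$-th root asymptotics are either available directly from Theorem~\ref{teo4} or can be extracted from its key asymptotic \eqref{eq:4*}. Rearranging the definition \eqref{Hnj} of $\mathcal{H}_{n,j}$ gives, for $j=0,\ldots,m$,
\[
\mathcal{A}_{n,j}(z)=\frac{Q_{n,j}(z)\,\mathcal{H}_{n,j}(z)}{Q_{n,j+1}(z)},
\]
where $Q_{n,0}\equiv Q_{n,m+1}\equiv 1$. By Theorem~\ref{teo4} the measures $\mu_{Q_{n,j}}$ converge weakly to $\lambda_j$; since the zeros of $Q_{n,j}$ remain in the fixed interval $\Delta_j$, the continuity of $\log|z-x|$ for $x\in\Delta_j$ and $z$ in a compact subset of $\mathbb{C}\setminus\Delta_j$ gives
\[
\lim_n |Q_{n,j}(z)|^{1/n}=e^{-V^{\lambda_j}(z)}
\]
uniformly on compact subsets of $\mathbb{C}\setminus\Delta_j$, and the analogue for $Q_{n,j+1}$. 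Thus the problem reduces to identifying $\lim_n |\mathcal{H}_{n,j}(z)|^{1/n}$.

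For $j=m$ the conventions give $\mathcal{H}_{n,m}\equiv 1$, so nothing needs to be proved. For $0\le j\le m-1$, I would use \eqref{int4}, rewritten as
\[
\mathcal{H}_{n,j}(z)=\int\frac{{\rm d}\tau_n(x)}{z-x},\qquad {\rm d}\tau_n(x):=\frac{Q_{n,j+1}^{2}(x)\,\mathcal{H}_{n,j+1}(x)\,{\rm d}\sigma_{j+1}(x)}{Q_{n,j}(x)\,Q_{n,j+2}(x)},
\]
together with the remark after Lemma~\ref{l4}, which states that ${\rm d}\tau_n$ has constant sign on $\Delta_{j+1}$. Denote $T_n:=\int {\rm d}\tau_n$; relation \eqref{eq:4*} at level $j+1$ gives $|T_n|^{1/n}\to \exp\bigl(-2\sum_{k=j+1}^m\omega_k^{\vec{\lambda}}\bigr)$. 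The probability measures $|{\rm d}\tau_n|/|T_n|$ are all supported in the fixed interval $\Delta_{j+1}$, so their Cauchy transforms are uniformly bounded by $1/\mathrm{dist}(z,\Delta_{j+1})$ on compacts off $\Delta_{j+1}$, and form a normal family. Every weak-$*$ accumulation point of $|{\rm d}\tau_n|/|T_n|$ is again a probability measure on $\Delta_{j+1}$, whose Cauchy transform does not vanish on $\mathbb{C}\setminus\Delta_{j+1}$ (nonzero imaginary part for nonreal $z$; constant-sign integrand $1/(z-x)$ for real $z$ outside $\Delta_{j+1}$). Consequently $|\mathcal{H}_{n,j}(z)/T_n|$ is bounded above and uniformly bounded away from zero on every compact $\mathcal{K}\subset\mathbb{C}\setminus\Delta_{j+1}$, so $|\mathcal{H}_{n,j}(z)/T_n|^{1/n}\to 1$ uniformly and
\[
\lim_n |\mathcal{H}_{n,j}(z)|^{1/n}=\exp\Bigl(-2\sum_{k=j+1}^m\omega_k^{\vec{\lambda}}\Bigr),\qquad \mathcal{K}\subset\mathbb{C}\setminus\Delta_{j+1}.
\]

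Multiplying the three limits yields \eqref{conver1} with $A_j(z)=\exp\bigl(V^{\lambda_{j+1}}(z)-V^{\lambda_j}(z)-2\sum_{k=j+1}^m\omega_k^{\vec{\lambda}}\bigr)$ on compact subsets of $\mathbb{C}\setminus(\Delta_j\cup\Delta_{j+1})$; the separate formulas for $A_0$ and $A_m$ in the statement follow from the conventions $V^{\lambda_0}\equiv V^{\lambda_{m+1}}\equiv 0$ and from the empty-sum convention when $j=m$. The main obstacle is the passage from the scalar asymptotic \eqref{eq:4*} for the total mass to a uniform pointwise asymptotic for $\mathcal{H}_{n,j}(z)$; the constant-sign property of ${\rm d}\tau_n$ noted after Lemma~\ref{l4} is exactly what makes this step routine, since it allows one to replace a signed Cauchy transform by a scalar multiple of a positive Cauchy transform, for which non-vanishing off the real support is automatic.
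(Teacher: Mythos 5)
Your proof is correct and follows essentially the same route as the paper's: both factor $\mathcal{A}_{n,j}=Q_{n,j}\mathcal{H}_{n,j}/Q_{n,j+1}$ (this is the paper's \eqref{eq:7}), take $n$-th roots of the polynomial ratio via the weak-star limits of Theorem~\ref{teo4}, and extract the $n$-th root limit of $\mathcal{H}_{n,j}$ from \eqref{int4} using the constant-sign property of the measure together with \eqref{eq:4*}. The only cosmetic difference is that the paper establishes the two-sided bound $C_1 K_{n,j+1}^{-2}\le|\mathcal{H}_{n,j}(z)|\le C_2 K_{n,j+1}^{-2}$ by the explicit geometric inequality \eqref{cotainf}, whereas you obtain the same uniform comparability with $|T_n|=K_{n,j+1}^{-2}$ by a normal-family/Hurwitz argument for Cauchy transforms of probability measures on $\Delta_{j+1}$.
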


\begin{proof} If $j=m$ then $ \mathcal{A}_{n,m} = Q_{n,m}$ and
\eqref{weak} directly implies that
\[ \lim_{n\in\Lambda} |\mathcal{A}_{n,m}(z)|^{1/n}
=  \exp\left(- V^{ {\lambda}_{m}}(z)\right), \qquad
\mathcal{K} \subset \mathbb{C} \setminus \Delta_{m}.
\]
For $j \in \{0,\ldots,m-1\}$, from \eqref{int4} we have
\begin{equation} \label{eq:7} \mathcal{A}_{n,j}(z) = \frac{Q_{n,j}(z)}{Q_{n,j+1}(z)}\int \frac{ Q^2_{n,j+1}(x)}{z-x} \frac{\mathcal{H}_{n,j+1}(x){\rm d}\sigma_{j+1}(x)}{Q_{n,j}(x)Q_{n,j+2}(x)},
\end{equation}
where $Q_{n,0}\equiv Q_{n,m+1} \equiv 1$. Now, \eqref{weak} implies
\[ \lim_{n\to \infty}\left|\frac{Q_{n,j}(z)}{Q_{n,j+1}(z)}\right|^{ {1}/{n}}=\exp\left(
V^{{\lambda}_{j+1}}(z)- V^{ {\lambda}_j}(z)\right),
\qquad \mathcal{K} \subset \mathbb{C} \setminus (\Delta_j \cup
\Delta_{j+1})
\]
(we also use that the zeros of $Q_{n,j}$ and $Q_{n,j+1}$ lie in $\Delta_j$ and $\Delta_{j+1}$, respectively). It
remains to find the $n$-th root asymptotic behavior of
the integral.

Fix a compact set $\mathcal{K} \subset \mathbb{C} \setminus
\Delta_{j+1}.$ It is easy to verify that (for the definition of
$K_{n,j+1}$ see \eqref{Knj})
\begin{equation} \label{cotainf} \frac{C_1}{K_{n,j+1}^2} \leq \left|  \int \frac{ Q^2_{n,j+1}(x)}{z-x} \frac{\mathcal{H}_{n,j+1}(x){\rm d}\sigma_{j+1}(x)}{Q_{n,j}(x)Q_{n,j+2}(x)}\right| \leq \frac{C_2}{K_{{\bf
n},j+1}^2} \,,
\end{equation}
where
\[ C_1 = \frac{\min \{ \max\{|u-x|,|v|: z = u+iv\}: z \in \mathcal{K}, x
\in \Delta_{j+1}\}}{ \max\{|z-x|^2: z \in \mathcal{K}, x \in
\Delta_{j+1}\}} > 0
\]
and
\[ C_2 = \frac{1}{\min\{|z-x|: z \in \mathcal{K}, x \in
\Delta_{j+1}\}} < \infty.
\]
Taking into account (\ref{eq:4*})
\begin{equation}\label{eq:9}
\lim_{n \to \infty} \left| \int \frac{ Q^2_{n,j+1}(x)}{z-x} \frac{\mathcal{H}_{n,j+1}(x){\rm d}\sigma_{j+1}(x)}{Q_{n,j}(x)Q_{n,j+2}(x)}\right|^{1/n} = \exp\left(-2
\sum_{k=j+1}^{m} {\omega_k^{\vec{\lambda}}}\right)\,.
\end{equation}
From (\ref{eq:7})-(\ref{eq:9}), we obtain (\ref{conver1}) and we
are done.
\end{proof}

In \cite[Theorem 1.6]{LMS} it was proved that for each $j= 0,\ldots,m-1$
\begin{equation} \label{conver2} \lim_{n\to \infty} \frac{a_{n,j}(z)}{a_{n,m}(z)} = \widehat{s}_{m,j+1}(z),
\end{equation}
uniformly on compact subsets of $\mathbb{C}\setminus \Delta_m$.

\medskip

\begin{corollary} \label{cor1} Assume $\sigma_j  \in \mbox{\bf Reg}$ and $\supp{\sigma_j}$ is regular for each $j=1,\ldots,m$. Then,
\begin{equation}\label{conver3}
\lim_{n \to \infty} |{a}_{n,j}(z)|^{1/n}=A_m(z), \qquad j=0,\ldots,m,
\end{equation}
uniformly on compact subsets of $\mathbb{C}\setminus \Delta_m$.
\end{corollary}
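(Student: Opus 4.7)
The plan is to reduce the statement to the already–proved case $j=m$ via the ratio asymptotic \eqref{conver2}. The case $j=m$ itself is immediate: since $a_{n,m}=(-1)^m\mathcal{A}_{n,m}=(-1)^m Q_{n,m}$, Theorem \ref{general} (or equivalently Theorem \ref{teo4} applied through \eqref{conv-Qnj}) already yields $|a_{n,m}(z)|^{1/n}\to A_m(z)$ uniformly on compact subsets of $\mathbb{C}\setminus\Delta_m$.

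For $j\in\{0,\ldots,m-1\}$, I would write
\[
|a_{n,j}(z)|^{1/n}=|a_{n,m}(z)|^{1/n}\cdot\left|\frac{a_{n,j}(z)}{a_{n,m}(z)}\right|^{1/n},
\]
and invoke \eqref{conver2}, which asserts $a_{n,j}/a_{n,m}\to\widehat{s}_{m,j+1}$ uniformly on compact subsets of $\mathbb{C}\setminus\Delta_m$. The task then reduces to showing that $|a_{n,j}/a_{n,m}|^{1/n}\to 1$ uniformly on every compact $\mathcal{K}\subset\mathbb{C}\setminus\Delta_m$, which by an elementary argument is automatic once one knows the limit $\widehat{s}_{m,j+1}$ is bounded and non-vanishing on $\mathcal{K}$: uniform convergence forces $a_{n,j}/a_{n,m}$ to lie between two positive constants on $\mathcal{K}$ for all large $n$, and the $n$-th root of such a quantity tends to $1$.

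The one non-routine point, which I expect to be the main obstacle, is the non-vanishing of $\widehat{s}_{m,j+1}$ on $\mathbb{C}\setminus\Delta_m$. I would establish it by induction on $m-(j+1)$. The base case $s_{m,m}=\sigma_m$ is a positive measure supported in $\Delta_m$. For the inductive step, recall that
\[
\mathrm{d}s_{m,k}(x)=\widehat{s}_{m-1,k}(x)\,\mathrm{d}\sigma_m(x),
\]
and by the induction hypothesis $s_{m-1,k}$ is (up to an overall sign) a positive measure supported in $\Delta_{m-1}$. Since $\Delta_{m-1}\cap\Delta_m=\emptyset$ by \eqref{nonint} and both are intervals, $\Delta_{m-1}$ lies entirely to one side of $\Delta_m$, so $\widehat{s}_{m-1,k}$ has constant (nonzero) sign on $\Delta_m$. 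Consequently $s_{m,j+1}$ is—up to an overall sign—a positive Borel measure supported in $\Delta_m$, and $\widehat{s}_{m,j+1}$ is, up to that sign, a classical Markov function. Such a Cauchy transform has strictly negative imaginary part in the upper half-plane and strictly positive imaginary part in the lower half-plane, and on each component of $\mathbb{R}\setminus\Delta_m$ it has constant nonzero sign; in particular, it is zero-free on $\mathbb{C}\setminus\Delta_m$.

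Putting the pieces together: on any compact $\mathcal{K}\subset\mathbb{C}\setminus\Delta_m$, $\widehat{s}_{m,j+1}$ attains a positive minimum modulus, hence by the uniform convergence \eqref{conver2} the ratio $a_{n,j}/a_{n,m}$ is bounded above and bounded below away from $0$ on $\mathcal{K}$ for $n$ large. Therefore $|a_{n,j}/a_{n,m}|^{1/n}\to 1$ uniformly on $\mathcal{K}$, and multiplying by $|a_{n,m}|^{1/n}\to A_m$ yields \eqref{conver3}.
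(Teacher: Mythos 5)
Your proposal is correct and takes essentially the same approach as the paper: treat $j=m$ via $a_{n,m}=(-1)^m Q_{n,m}$ and Theorem~\ref{general}, then reduce the remaining cases to $j=m$ using \eqref{conver2} together with the fact that $\widehat{s}_{m,j+1}$ is zero-free in $\mathbb{C}\setminus\Delta_m$. The only difference is that you explicitly verify the non-vanishing of $\widehat{s}_{m,j+1}$ (by induction, reducing to the standard Markov-function fact), whereas the paper simply asserts it, presumably regarding it as a well-known consequence of $s_{m,j+1}$ being, up to sign, a positive measure supported in $\Delta_m$.
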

\begin{proof} We have $\mathcal{A}_{n,m} = (-1)^m a_{n,m}$; consequently, \eqref{conver3} when $j=m$ is a consequence of \eqref{conver1}. On the other hand, the function $\widehat{s}_{m,j+1}$ never equals zero in $\mathbb{C}\setminus \Delta_m$; therefore, for the remaining  values of $j$ formula \eqref{conver3} is an immediate consequence of \eqref{conver3} for $j=m$ and \eqref{conver2}.
\end{proof}


Now we wish to use the results obtained to produce estimates of the rate of convergence in \eqref{conver2}. For this we need some properties that we summarize in the next corollary.

\begin{corollary}\label{Medina1}
	Under the assumptions of Theorem \ref{general}, for each $k=1,\ldots, m$ and $j=0, \ldots k-1$ we have
 \begin{equation}\label{Serg2}
	\limsup_{n \to \infty} \left| \frac{ \mathcal{A}_{n,j}}{ \mathcal{A}_{n,k}} \right|^{1/n}
	\end{equation}
	\[
	\leq \exp\left(-V^{ {\lambda}_{k+1}}(z)+V^{ {\lambda}_{k}}(z)+V^{ {\lambda}_{j+1}}(z)-
	V^{ {\lambda}_j}(z) - 2\sum_{\ell=j+1}^{k}
	{\omega_\ell^{\vec{\lambda}}}\right),
	\]
uniformly on compact subsets $\displaystyle \mathcal{K}\subset \mathbb{C}\setminus \left(\Delta_{k}\cup  \Delta_{j+1}\right)$, and
 \begin{equation}\label{Serg}
	\lim_{n \to \infty} \left| \frac{ \mathcal{A}_{n,j}}{ \mathcal{A}_{n,k}} \right|^{1/n}
	\end{equation}
	\[
	=\exp\left(-V^{ {\lambda}_{k+1}}(z)+V^{ {\lambda}_{k}}(z)+V^{ {\lambda}_{j+1}}(z)-
	V^{ {\lambda}_j}(z) - 2\sum_{\ell=j+1}^{k}
	{\omega_\ell^{\vec{\lambda}}}\right),
	\]
uniformly on compact subsets $\mathcal{K}\subset \mathbb{C}\setminus
	\Delta_j \cup \Delta_{j+1} \cup \Delta_{k} \cup \Delta_{k+1}$.
For $k=1,\ldots,m$
 \begin{equation}\label{equilibriumk}
 -V^{ {\lambda}_{k+1}}(z)+2V^{ {\lambda}_{k}}(z)-V^{ {\lambda}_{k-1}}(z) - 2
	{\omega_k^{\vec{\lambda}}}<0,\quad z\in\mathbb{C}\setminus\Delta_{k},
 \end{equation}
by convention, $V^{ {\lambda}_{0}}(z) = V^{ {\lambda}_{m+1}}(z) \equiv 0$. If $k>j+1$	
	\begin{equation}\label{equilibriumjk}
	-V^{ {\lambda}_{k+1}}(z)+V^{ {\lambda}_{k}}(z)+V^{ {\lambda}_{j+1}}(z)-
	V^{ {\lambda}_j}(z) - 2\sum_{\ell=j+1}^{k}
	{\omega_\ell^{\vec{\lambda}}} <0,\quad z\in\mathbb{C},
	\end{equation}
	which implies that the sequence $ \left\lbrace \mathcal{A}_{n,j} /\mathcal{A}_{n,k}\right\rbrace  $  converges to zero with geometric rate on any compact subset of $\mathbb{C}\setminus (\Delta_{k}\cup \Delta_{j+1}) $.
\end{corollary}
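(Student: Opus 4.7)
I would break the proof into four pieces, corresponding to the four assertions.

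For (\ref{Serg}), take the quotient of the two asymptotics from Theorem \ref{general}: $\lim_n|\mathcal{A}_{n,j}/\mathcal{A}_{n,k}|^{1/n} = A_j(z)/A_k(z)$. The explicit formulas for $A_j$ and $A_k$ telescope---the tails $\sum_{\ell=k+1}^m\omega_\ell^{\vec{\lambda}}$ cancel with the matching part of $\sum_{\ell=j+1}^m\omega_\ell^{\vec{\lambda}}$---and one recovers exactly the right-hand side of (\ref{Serg}). The domain is the intersection $\mathbb{C}\setminus(\Delta_j\cup\Delta_{j+1}\cup\Delta_k\cup\Delta_{k+1})$ of the two individual domains from Theorem \ref{general}. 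The edge cases $j=0$ and $k=m$ are covered by the conventions $V^{\lambda_0}\equiv V^{\lambda_{m+1}}\equiv 0$ and $\omega_{m+1}^{\vec{\lambda}}=0$, using the separate formulas for $A_0$ and $A_m$.

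For (\ref{Serg2}), to enlarge the domain to $\mathbb{C}\setminus(\Delta_k\cup\Delta_{j+1})$, I would use the factorization $\mathcal{A}_{n,\ell} = Q_{n,\ell}\mathcal{H}_{n,\ell}/Q_{n,\ell+1}$ from (\ref{Hnj}), giving
\[
\left|\frac{\mathcal{A}_{n,j}(z)}{\mathcal{A}_{n,k}(z)}\right|^{1/n} = \frac{|Q_{n,j}(z)|^{1/n}\,|\mathcal{H}_{n,j}(z)|^{1/n}\,|Q_{n,k+1}(z)|^{1/n}}{|Q_{n,k}(z)|^{1/n}\,|\mathcal{H}_{n,k}(z)|^{1/n}\,|Q_{n,j+1}(z)|^{1/n}}.
\]
For $z$ on a compact $\mathcal{K}\subset\mathbb{C}\setminus(\Delta_k\cup\Delta_{j+1})$ the factors $|Q_{n,k}|^{1/n}$ and $|Q_{n,j+1}|^{1/n}$ converge uniformly to $e^{-V^{\lambda_k}}$ and $e^{-V^{\lambda_{j+1}}}$ by (\ref{conv-Qnj}); the $\mathcal{H}_{n,\ell}$ factors converge to $\exp(-2\sum_{i=\ell+1}^m\omega_i^{\vec{\lambda}})$ on compacts of $\mathbb{C}\setminus\Delta_{\ell+1}$, by the same Cauchy-integral estimates as in the proof of Theorem \ref{general}---the two-sided inequality (\ref{cotainf}) made possible by the constant sign of the varying measure (Remark after Lemma \ref{l4}); and for the remaining numerator factors $|Q_{n,j}|^{1/n}$ and $|Q_{n,k+1}|^{1/n}$, whose zeros may cluster inside $\mathcal{K}$, the Principle of Descent for logarithmic potentials supplies the uniform upper bounds $\limsup\leq e^{-V^{\lambda_j}}$ and $\limsup\leq e^{-V^{\lambda_{k+1}}}$. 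Taking a limsup of the ratio and assembling these estimates yields (\ref{Serg2}).

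For the strict inequalities, set
\[
g_\ell(z)=2V^{\lambda_\ell}(z)-V^{\lambda_{\ell-1}}(z)-V^{\lambda_{\ell+1}}(z)-2\omega_\ell^{\vec{\lambda}},
\]
so that (\ref{equilibriumk}) reads $g_k<0$ on $\mathbb{C}\setminus\Delta_k$. The variational identity (\ref{eq:a1}) from the proof of Theorem \ref{teo4} gives $g_k=0$ on $\supp{\lambda_k}$ and $g_k\geq 0$ on $E_k$. On the open set $\mathbb{C}\setminus\supp{\lambda_k}$ the potential $V^{\lambda_k}$ is harmonic while $-V^{\lambda_{k\pm 1}}$ are subharmonic, hence $g_k$ is subharmonic there; moreover $g_k(\infty)=-2\omega_k^{\vec{\lambda}}<0$ by positivity of the Nikishin equilibrium constants. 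The strong maximum principle then yields $g_k<0$ on $\mathbb{C}\setminus E_k\supseteq\mathbb{C}\setminus\Delta_k$. Finally, the LHS of (\ref{equilibriumjk}) equals $\sum_{\ell=j+1}^k g_\ell(z)$ by a direct telescoping computation (the coefficients of $V^{\lambda_m}$ collapse to $-1,+1,0,\ldots,0,+1,-1$ for $m=j,j+1,\ldots,k,k+1$); on $\mathbb{C}\setminus\bigcup_{\ell=j+1}^k\Delta_\ell$ every summand is strictly negative by (\ref{equilibriumk}), while at any $z\in\Delta_{\ell_0}$ with $\ell_0\in\{j+1,\ldots,k\}$ (disjoint intervals being the standard Nikishin setting) $g_{\ell_0}(z)$ is non-positive and the remaining $k-j-1\geq 1$ summands are strictly negative, so the sum is $<0$.

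The main obstacle is the third piece: converting the standard variational equality on $\supp{\lambda_k}$ into a strict inequality on the whole $\mathbb{C}\setminus\Delta_k$. This requires combining subharmonicity of $g_k$ outside $\supp{\lambda_k}$, its boundary value zero on that support, the behavior $g_k(\infty)=-2\omega_k^{\vec{\lambda}}$, and the non-trivial fact that the equilibrium constants for the Nikishin interaction matrix $\mathcal{C}_{\mathcal{N}}$ are strictly positive, in order to invoke the strong maximum principle. Once (\ref{equilibriumk}) is secured, (\ref{equilibriumjk}) and the geometric convergence of $\mathcal{A}_{n,j}/\mathcal{A}_{n,k}$ to zero follow by the telescoping argument above.
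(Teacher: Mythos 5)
Your plan matches the paper's argument in structure and in every essential tool: write $\mathcal{A}_{n,j}/\mathcal{A}_{n,k}$ via \eqref{Hnj}/\eqref{eq:7} as a ratio of polynomials times a ratio of Cauchy-type integrals, use \eqref{weak} for the factors avoiding $\mathcal{K}$, the principle of descent for the factors whose zeros may enter $\mathcal{K}$, the two-sided bound \eqref{cotainf} and \eqref{eq:4*} for the integral factors, then subharmonicity plus the strong maximum principle for \eqref{equilibriumk} and the telescoping decomposition with disjointness of consecutive $\Delta_\ell$ for \eqref{equilibriumjk}. Two small points deserve tightening, though neither changes the route. First, in your treatment of \eqref{Serg2} you assert that $|\mathcal{H}_{n,k}|^{1/n}$ \emph{converges} on $\mathcal{K}$; but that convergence is proved only on compacts of $\mathbb{C}\setminus\Delta_{k+1}$, and $\mathcal{K}\subset\mathbb{C}\setminus(\Delta_k\cup\Delta_{j+1})$ need not avoid $\Delta_{k+1}$ when $k<m$. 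What is actually used (and is all that is needed for a limsup on the ratio) is only the \emph{lower} bound $|\mathcal{H}_{n,k}(z)|\ge C_1 K_{n,k+1}^{-2}$ coming from the constant sign of the varying measure, exactly as the paper invokes the lower half of \eqref{cotainf} with $j$ replaced by $k$. Second, to rule out constancy of $g_k$ before applying the strong maximum principle, you rely on ``positivity of the Nikishin equilibrium constants,'' a fact the paper neither proves nor needs; the safer and self-contained argument is that the distributional Laplacian $\Delta g_k = 2\pi(\lambda_{k-1}+\lambda_{k+1}-2\lambda_k)$ cannot vanish because $\lambda_{k-1},\lambda_k,\lambda_{k+1}$ are nonzero measures with pairwise disjoint supports, so $g_k$ is non-constant and the strict inequality in \eqref{equilibriumk} follows directly.
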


\begin{proof} Fix $\displaystyle k\in \left\{1, \ldots, m\right\}$ and $\displaystyle j\in \left\{0,\ldots,k-1\right\}$. From (\ref{eq:7}) we obtain that
\begin{equation}\label{previousciudadprofeta1}
\frac{\mathcal{A}_{n,j}(z)}{\mathcal{A}_{n,k}(z)} = \frac{Q_{n,j}(z)Q_{n,k+1}(z)}{Q_{n,j+1}(z)Q_{n,k}(z)} \frac{\displaystyle \int \frac{ Q^2_{n,j+1}(x)}{z-x} \frac{\mathcal{H}_{n,j+1}(x){\rm d}\sigma_{j+1}(x)}{Q_{n,j}(x)Q_{n,j+2}(x)}}{\displaystyle \int \frac{ Q^2_{n,k+1}(x)}{z-x} \frac{\mathcal{H}_{n,k+1}(x){\rm d}\sigma_{k+1}(x)}{Q_{n,k}(x)Q_{n,k+2}(x)}}
\end{equation}
Using the relation (\ref{weak}) in Theorem \ref{teo4}, it follows that uniformly on each compact subset $\mathcal{K}\subset \mathbb{C}\setminus
	(\Delta_j \cup \Delta_{j+1} \cup \Delta_{k} \cup \Delta_{k+1})$, we have
\[
\lim_{n\to \infty} \left|\frac{(Q_{n,j}Q_{n,k+1})(z)}{(Q_{n,j+1}Q_{n,k})(z)} \right|^{1/n}= \exp \left(-V^{ {\lambda}_{k+1}}+V^{ {\lambda}_{k}}+V^{ {\lambda}_{j+1}} - V^{ {\lambda}_j} \right)(z),
\]
and taking into account (\ref{eq:9}), from (\ref{previousciudadprofeta1}) we obtain (\ref{Serg}).

\medskip

Now, from the principle of descent (see \cite[Appendix III]{stto}), locally uniformly on $\mathbb{C}$ we have
\begin{equation}\label{descenso}
\limsup_{n \to \infty}\left|Q_{n,j}(z)Q_{n,k+1}(z)\right|^{1/n} \leq \exp \left(-V^{\lambda_{k+1}}(z)-V^{\lambda_j}(z)\right).
\end{equation}
Using the lower bound in \eqref{cotainf} (with $j$ replaced by $k$) to estimate the integral in the denominator of \eqref{previousciudadprofeta1}
from below and the previous remarks, \eqref{Serg2} readily follows.

\medskip

According to (\ref{eq:3*}), for $k=1,\ldots,m,$ we have
\begin{equation} \label{igual}
-    V^{ {\lambda}_{k+1}}(z)+2V^{ {\lambda}_{k}}(z)- V^{ {\lambda}_{k-1}}(z) - 2
{\omega_{k}^{\vec{\lambda}}} =0,\quad z
\in \mbox{supp}(\lambda_{k}).
\end{equation}
Recall that all the measures $\lambda_k$ are probability, hence for each $k=2, \ldots, m-1$ the function $- V^{ {\lambda}_{k+1}}(z)+2V^{ {\lambda}_{k}}(z)- V^{ {\lambda}_{k-1}}(z)- 2
{\omega_{k}^{\vec{\lambda}}}$ is harmonic at $z=\infty$, and is subharmonic in $ {\mathbb{C}}\setminus \supp{\lambda_{k}}$.  According to the maximum principle for subharmonic functions we obtain \eqref{equilibriumk}.

When $k=1$, the left hand of \eqref{igual} reduces to  $
- V^{ {\lambda}_{2}}(z)+2V^{ {\lambda}_{1}}(z)- 2 {\omega_{1}^{\vec{\lambda}}}$  which is subharmonic in $\mathbb{C} \setminus \supp{\lambda_1}$ and also subharmonic at $\infty$ since
\[
\lim_{z\to \infty} \left(- V^{ {\lambda}_{2}}(z)+2V^{ {\lambda}_{1}}(z)- 2 {\omega_{1}^{\vec{\lambda}}}\right) =-\infty.
\]
Therefore, we can also use the maximum principle to derive \eqref{equilibriumk}.
The case $k=m$ is completely analogous to the case $k=1$.

When $k>j+1$ we can write
\[
	-V^{ {\lambda}_{k+1}}(z)+V^{ {\lambda}_{k}}(z)+V^{ {\lambda}_{j+1}}(z)-
	V^{ {\lambda}_j}(z) - 2\sum_{\ell=j+1}^{k}
	{\omega_\ell^{\vec{\lambda}}}
\]
\[
= \sum_{\ell=j+1}^k \left( - V^{ {\lambda}_{\ell+1}}(z)+2V^{ {\lambda}_{\ell}}(z)- V^{ {\lambda}_{\ell-1}}(z)- 2 {\omega_{\ell}^{\vec{\lambda}}}\right),
\]
and this sum contains at least two terms because $k > j+1$. Each term is less than or equal to zero in all $\mathbb{C}$ and so is the whole sum. To prove that it is strictly negative it is sufficient to show that at each point there is at least one negative term in the sum.
Let us assume that there is a $z_0 \in \mathbb{C}$ such that
\[ - V^{ {\lambda}_{\ell+1}}(z_0)+2V^{ {\lambda}_{\ell}}(z_0)- V^{ {\lambda}_{\ell-1}}(z_0)- 2 {\omega_{\ell}^{\vec{\lambda}}} = 0, \qquad \ell = j+1,\ldots,k.
\]
By what was proved above, this implies that $z_0 \in \cap_{\ell = j+1}^k \Delta_\ell$. However, this is impossible because consecutive intervals in a Nikishin system are disjoint. From \eqref{Serg2} and \eqref{equilibriumjk} the final statement of the corollary readily follows.
\end{proof}

As a consequence of Corollary \ref{Medina1} we can recover the functions $\widehat{s}_{m-1,j+1},j=0,\ldots,m-2$.

\begin{corollary} \label{cor4} Under the assumptions of Theorem \ref{general}, for each  $j=0, \ldots m-2$, we have
\begin{equation} \label{nivelanterior}
\lim_{n\to \infty} \frac{(a_{n,j} - a_{n,m}\widehat{s}_{m,j+1})(z)}{(a_{n,m-1} - a_{n,m}\widehat{s}_{m,m})(z)} = \widehat{s}_{m-1,j+1}(z),
\end{equation}
uniformly on each compact subset of $\mathbb{C}\setminus  \cup_{\ell=j +1}^{m}\Delta_{\ell}.$
\end{corollary}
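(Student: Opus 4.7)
Set $L_{n,j}(z) := a_{n,j}(z) - a_{n,m}(z)\widehat{s}_{m,j+1}(z)$; the claim becomes $L_{n,j}(z)/L_{n,m-1}(z) \to \widehat{s}_{m-1,j+1}(z)$ on the indicated set. Relation \eqref{tipoIdm} already identifies the denominator:
\[
L_{n,m-1}(z) = a_{n,m-1}(z) - a_{n,m}(z)\widehat{s}_{m,m}(z) = (-1)^{m-1}\mathcal{A}_{n,m-1}(z).
\]
My plan is to produce the analogous representation for arbitrary $j$, expressing $L_{n,j}$ as a linear combination of the forms $\mathcal{A}_{n,j},\ldots,\mathcal{A}_{n,m-1}$ with Cauchy-transform coefficients, in which the coefficient attached to $\mathcal{A}_{n,m-1}$ is precisely $(-1)^{m-1}\widehat{s}_{m-1,j+1}(z)$.

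Concretely, I shall establish the identity
\[
L_{n,j}(z) = (-1)^{j}\mathcal{A}_{n,j}(z) + \sum_{k=j+1}^{m-1}(-1)^{k}\widehat{s}_{k,j+1}(z)\,\mathcal{A}_{n,k}(z).
\]
To derive it, substitute the general expression
\[
\mathcal{A}_{n,k}(z) = (-1)^{k}a_{n,k}(z) + \sum_{\ell=k+1}^{m}(-1)^{\ell}a_{n,\ell}(z)\widehat{s}_{k+1,\ell}(z),
\]
read off from \eqref{tipoIam}--\eqref{tipoIdm}, into the right-hand side and regroup by $a_{n,\ell}$. The coefficient of $a_{n,j}$ is obviously $1$; the coefficient of $a_{n,m}$ must equal $-\widehat{s}_{m,j+1}$; and the coefficients of the intermediate $a_{n,\ell}$ with $j+1 \leq \ell \leq m-1$ must vanish. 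All three demands collapse to the single family of Nikishin-type identities
\[
(-1)^{j+\ell}\widehat{s}_{j+1,\ell}(z) + \widehat{s}_{\ell,j+1}(z) + \sum_{k=j+1}^{\ell-1}(-1)^{k+\ell}\widehat{s}_{k,j+1}(z)\widehat{s}_{k+1,\ell}(z) = 0, \qquad \ell=j+1,\ldots,m.
\]
This family I prove by induction on $\ell-j$: the base $\ell=j+1$ is immediate; the case $\ell=j+2$ is the classical Stieltjes identity $\widehat{\sigma}_i(z)\widehat{\sigma}_{i+1}(z)=\widehat{s}_{i,i+1}(z)+\widehat{s}_{i+1,i}(z)$, obtained from the partial fraction $[(z-x)(z-t)]^{-1}=(x-t)^{-1}[(z-x)^{-1}-(z-t)^{-1}]$ together with Fubini; the inductive step again uses only Fubini and partial fractions to recognize the combination $\widehat{s}_{j+1,\ell}\pm\widehat{s}_{\ell,j+1}$ as an alternating telescoping sum of the products $\widehat{s}_{k,j+1}\widehat{s}_{k+1,\ell}$.

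Granted this identity, divide by $L_{n,m-1}(z)=(-1)^{m-1}\mathcal{A}_{n,m-1}(z)$ and peel off the leading term:
\[
\frac{L_{n,j}(z)}{L_{n,m-1}(z)} = \widehat{s}_{m-1,j+1}(z) + (-1)^{m-1}\sum_{k=j}^{m-2}c_{j,k}(z)\,\frac{\mathcal{A}_{n,k}(z)}{\mathcal{A}_{n,m-1}(z)},
\]
with $c_{j,j}(z)=(-1)^{j}$ and $c_{j,k}(z)=(-1)^{k}\widehat{s}_{k,j+1}(z)$ for $j<k\leq m-2$; every $c_{j,k}$ is analytic and locally bounded on $\mathbb{C}\setminus\cup_{\ell=j+1}^{m}\Delta_\ell$. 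By Corollary \ref{Medina1} each ratio $\mathcal{A}_{n,k}/\mathcal{A}_{n,m-1}$, $k=j,\ldots,m-2$, tends to zero uniformly and geometrically on compacts of $\mathbb{C}\setminus(\Delta_{m-1}\cup\Delta_{k+1})$ thanks to \eqref{equilibriumk} (when $k=m-2$) or \eqref{equilibriumjk} (when $k<m-2$). Since the target set $\mathbb{C}\setminus\cup_{\ell=j+1}^{m}\Delta_\ell$ is contained in each of these exceptional sets, the whole sum vanishes uniformly and \eqref{nivelanterior} follows.

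The main obstacle is the Nikishin-type identity: although each atomic step is an elementary Fubini/partial-fraction manipulation, the alternating-sign telescoping must be steered so that the products $\widehat{s}_{k,j+1}\widehat{s}_{k+1,\ell}$ reassemble into $\widehat{s}_{j+1,\ell}$ and $\widehat{s}_{\ell,j+1}$ with precisely the right signs. I expect the cleanest implementation is to phrase the $m=2$ Stieltjes identity symmetrically as $\widehat{s}_{i,i+1}+\widehat{s}_{i+1,i}=\widehat{s}_{i,i}\widehat{s}_{i+1,i+1}$ and then induct on $\ell-j$, invoking the level-$(\ell-1)$ identity to cancel the cross-terms that appear when the $\widehat{s}_{j+1,\ell}$- and $\widehat{s}_{\ell,j+1}$-pieces are handled separately.
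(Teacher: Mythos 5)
Your argument follows the paper's proof of Corollary \ref{cor4} essentially verbatim: you establish the identity
\[
(-1)^j\bigl(a_{n,j}-a_{n,m}\widehat{s}_{m,j+1}\bigr)=\mathcal{A}_{n,j}+\sum_{k=j+1}^{m-1}(-1)^{k-j}\widehat{s}_{k,j+1}\,\mathcal{A}_{n,k},
\]
divide by $\mathcal{A}_{n,m-1}=(-1)^{m-1}\bigl(a_{n,m-1}-a_{n,m}\widehat{s}_{m,m}\bigr)$, isolate $\widehat{s}_{m-1,j+1}$, and invoke Corollary \ref{Medina1} to kill the remaining ratios $\mathcal{A}_{n,k}/\mathcal{A}_{n,m-1}$, $k\le m-2$, uniformly and geometrically on the stated compacts. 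The only divergence is that the paper quotes this identity directly from \cite[Lemma 2.1]{LMS}, whereas you sketch a self-contained re-derivation via the Nikishin-transform identities
\[
(-1)^{j+\ell}\widehat{s}_{j+1,\ell}+\widehat{s}_{\ell,j+1}+\sum_{k=j+1}^{\ell-1}(-1)^{k+\ell}\widehat{s}_{k,j+1}\widehat{s}_{k+1,\ell}=0,\qquad \ell=j+1,\dots,m;
\]
these are indeed correct (the case $\ell=j+2$ is the Stieltjes product formula, and the inductive step can be steered by Fubini/partial fractions as you indicate), but you are in effect re-proving the quoted lemma from \cite{LMS}, so the economy gained is zero while the exposition gets longer. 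Citing \cite[Lemma 2.1]{LMS} would make the proof identical to the paper's.
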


\begin{proof} Using formula (2.2) from \cite[Lemma 2.1]{LMS} with $\mathcal{L}_j = \mathcal{A}_{n,j}, j=0,\ldots,m-2$ and $r=m-1$, we obtain the following identity
\begin{equation} \label{formaux}
\mathcal{A}_{n,j}+\sum_{k=j+1}^{m-1} (-1)^{k-j} \widehat{s}_{k,j+1} \mathcal{A}_{n,k}= (-1)^j\left(a_{n,j}-a_{n,m}\widehat{s}_{m,j+1}\right).
\end{equation}
The formula holds at all points where both sides are meaningful. Dividing by $\mathcal{A}_{n,m-1}$, we obtain
\[
\frac{\mathcal{A}_{n,j}}{\mathcal{A}_{n,m-1}}+\sum_{k=j+1}^{m-2} (-1)^{k-j} \widehat{s}_{k,j+1} \frac{\mathcal{A}_{n,k}}{\mathcal{A}_{n,m-1}} + (-1)^{m-1-j} \widehat{s}_{m-1,j+1}= \]
\[(-1)^{m-1 + j}\frac{\left(a_{n,j}-a_{n,m}\widehat{s}_{m,j+1}\right)}{\left(a_{n,m-1}-a_{n,m}\widehat{s}_{m,m}\right)}.
\]
To obtain \eqref{nivelanterior}, it remains to take limit on both sides and make use of the fact that the ratios ${\mathcal{A}_{n,k}}/{\mathcal{A}_{n,m-1}}$ uniformly tend to zero on compact subsets of $\mathbb{C}\setminus  \cup_{\ell=j+1}^{m}\Delta_{\ell}.$
\end{proof}

Incidentally, we wish to mention that \eqref{Serg2} and \eqref{equilibriumjk} imply that \eqref{nivelanterior} takes place with geometric rate.

\medskip

As a consequence of Corollary \ref{Medina1} we can also give explicit expressions for the exact rate of convergence of the limits (\ref{conver2}).

\begin{theorem}\label{velocidad} Assume that $(\sigma_1,\ldots,\sigma_m) \in \mbox{\bf Reg}$ and $\supp{\sigma_j} = E_j, j=1,\ldots,m$ is regular.
Then, for each $j=0,\ldots,m-1$:
\begin{equation}\label{City}
	\lim_{n\to \infty} \left|\left(\frac{a_{n,j}(z)}{a_{n,m}(z)} - \widehat{s}_{m,j+1}(z)\right)h_{n,j}^{-1}(z)\right|^{1/n}=\exp\left(2V^{ {\lambda}_{m}}(z)-
	V^{ {\lambda}_{m-1}}(z) - 2
	{\omega_m^{\vec{\lambda}}}\right)
	\end{equation}
uniformly on each compact subset $\mathcal{K}\subset \mathbb{C} \setminus (\Delta_m \cup \Delta_{m-1})$, where $h_{n,j}$ is a polynomial of degree at most $m-j$ whose roots are the possible zeros which $\frac{a_{n,j}}{a_{n,m}} - \widehat{s}_{m,j+1}$ may have in a neighborhood of $\mathcal{K} \cap \mathbb{R}$.
\end{theorem}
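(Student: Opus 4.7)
The starting point is the algebraic identity \eqref{formaux} already used in the proof of Corollary \ref{cor4}. Dividing both sides by $a_{n,m}=(-1)^m\mathcal{A}_{n,m}$ and rearranging gives
\[
\frac{a_{n,j}(z)}{a_{n,m}(z)} - \widehat{s}_{m,j+1}(z) = (-1)^{j+m}\left[\frac{\mathcal{A}_{n,j}(z)}{\mathcal{A}_{n,m}(z)} + \sum_{k=j+1}^{m-1}(-1)^{k-j}\widehat{s}_{k,j+1}(z)\frac{\mathcal{A}_{n,k}(z)}{\mathcal{A}_{n,m}(z)}\right].
\]
By \eqref{Serg} applied with the pair of indices $(k,m)$, and using that $V^{\lambda_{m+1}}\equiv 0$, each ratio $\mathcal{A}_{n,k}/\mathcal{A}_{n,m}$ has $n$-th root limit $\exp(V^{\lambda_m}+V^{\lambda_{k+1}}-V^{\lambda_k}-2\sum_{\ell=k+1}^{m}\omega_\ell^{\vec{\lambda}})$, and the summand $k=m-1$ produces exactly $\exp(2V^{\lambda_m}(z)-V^{\lambda_{m-1}}(z)-2\omega_m^{\vec{\lambda}})$, the right-hand side of \eqref{City}.

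The next step is to show that this particular summand dominates all the others on the relevant compact set. The plan is to factor out $\mathcal{A}_{n,m-1}/\mathcal{A}_{n,m}$ to obtain
\[
\frac{a_{n,j}(z)}{a_{n,m}(z)} - \widehat{s}_{m,j+1}(z) = \frac{\mathcal{A}_{n,m-1}(z)}{\mathcal{A}_{n,m}(z)}\bigl[-\widehat{s}_{m-1,j+1}(z)+\varepsilon_n(z)\bigr],
\]
where $\varepsilon_n$ is a finite combination of the ratios $\mathcal{A}_{n,k}/\mathcal{A}_{n,m-1}$ for $k\le m-2$, weighted by the bounded functions $\widehat{s}_{k,j+1}$. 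Applying \eqref{equilibriumjk} with the pair $(k,m-1)$ when $k<m-2$, and \eqref{equilibriumk} in the case $k=m-2$, each of these ratios tends to zero geometrically, uniformly on compacta of $\mathbb{C}\setminus\Delta_{m-1}$; the bound \eqref{Serg2} supplies the uniformity even across compacta meeting the intermediate intervals $\Delta_k$. Consequently $\varepsilon_n$ is negligible relative to $\widehat{s}_{m-1,j+1}$ wherever the latter is bounded away from zero, and the $n$-th root limit of the left-hand side coincides, on such sets, with that of $\mathcal{A}_{n,m-1}/\mathcal{A}_{n,m}$, which is precisely \eqref{City}.

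The remaining issue, and the sole reason that the correction factor $h_{n,j}$ enters the statement, is that $\widehat{s}_{m-1,j+1}$ can vanish at isolated real points of $\mathbb{R}\setminus\Delta_{m-1}$; near such points $f_n:=a_{n,j}/a_{n,m}-\widehat{s}_{m,j+1}$ may inherit zeros (together with transient zeros contributed by $\varepsilon_n$) at which the bare $n$-th root misbehaves. Since $\widehat{s}_{m-1,j+1}$ is the Cauchy transform of a Nikishin measure of chain length $m-1-j$, it has only finitely many real zeros outside its support, and Hurwitz's theorem forces the zeros of $f_n$ in a complex neighbourhood of $\mathcal{K}\cap\mathbb{R}$ to cluster only at these finitely many points, with total multiplicity bounded independently of $n$ by $m-j$. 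Defining $h_{n,j}$ as the monic polynomial whose roots are exactly these zeros, we obtain $\deg h_{n,j}\le m-j$, $|h_{n,j}|^{1/n}\to 1$ uniformly on $\mathcal{K}$, and $f_n/h_{n,j}$ is zero-free on $\mathcal{K}$; passage to the $n$-th root then yields \eqref{City}. The main obstacle is precisely this last uniform control on the zero set: one must guarantee that $f_n$ does not generate, for large $n$, spurious real zeros beyond those accumulating at zeros of $\widehat{s}_{m-1,j+1}$, so that the degree bound $\deg h_{n,j}\le m-j$ truly holds uniformly in $n$.
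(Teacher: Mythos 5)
Your first half coincides with the paper's own derivation: starting from \eqref{formaux}, dividing by $a_{n,m}$, factoring out $\mathcal{A}_{n,m-1}/\mathcal{A}_{n,m}$ and using \eqref{Serg}, \eqref{Serg2}, \eqref{equilibriumk}, \eqref{equilibriumjk} gives exactly \eqref{demedinaalameca}, i.e.\ the limit \eqref{City} with $h_{n,j}\equiv 1$ on compact subsets of $\mathbb{C}\setminus\cup_{\ell=j+1}^{m}\Delta_\ell$. The problems begin with the extension to compact sets that meet the intermediate intervals $\Delta_{j+1},\ldots,\Delta_{m-2}$ and the nearby real points. Your claim that \eqref{Serg2} ``supplies the uniformity even across compacta meeting the intermediate intervals'' is not correct: \eqref{Serg2} for the ratio $\mathcal{A}_{n,k}/\mathcal{A}_{n,m-1}$ is stated only on compact subsets of $\mathbb{C}\setminus(\Delta_{m-1}\cup\Delta_{k+1})$, and it cannot hold across $\Delta_{k+1}$ because $\mathcal{A}_{n,k}$ (and the weight $\widehat{s}_{k,j+1}$) have branch cuts there; the individual terms of the decomposition simply are not defined on the intermediate intervals, only their alternating sum is. So the potential-theoretic inequalities alone do not carry the limit across those intervals, and a genuinely different argument is needed there.

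The second gap concerns $h_{n,j}$. Its purpose is \emph{not} to absorb zeros of $\widehat{s}_{m-1,j+1}$: that function is the Cauchy transform of a constant-sign measure supported on $\Delta_{m-1}$ and therefore has \emph{no} zeros in $\mathbb{C}\setminus\Delta_{m-1}$ (the paper uses precisely this fact). The zeros one must remove are the possible zeros of $\delta_{n,j}=a_{n,j}/a_{n,m}-\widehat{s}_{m,j+1}$ itself near the intermediate intervals, where no locally uniform convergence is available, so your Hurwitz argument has nothing to feed on. Moreover, the key quantitative fact — that $\delta_{n,j}$ has at most $m-j$ zeros in $\mathbb{C}\setminus(\Delta_{m-1}\cup\Delta_m)$, uniformly in $n$ — is exactly what you concede you cannot prove; the paper proves it by noting that $\delta_{n,j}$ has at least $n-m+j$ sign changes on $\Delta_{m-1}$, so more zeros would force $a_{n,m}$ (of degree $n$) to satisfy $n+1$ orthogonality relations with respect to a constant-sign measure on $\Delta_m$, a contradiction. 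Finally, even after removing these zeros you still need a mechanism to upgrade \eqref{demedinaalameca} to disks centered on the real line: the paper does this with a normal-families argument for the $n$-th roots $f_{n,j}=(\delta_{n,j}/r_{n,j})^{1/n}$, a circle-selection avoiding small disks around the removed zeros so that the maximum principle yields uniform bounds, and analytic continuation of the limiting modulus from $D\setminus(\mathbb{R}\cup\{r_j=0\})$ to the whole disk. None of these steps appears in your proposal, so as written the passage from \eqref{demedinaalameca} to \eqref{City} is not established.
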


\begin{proof} Let us start out again from \eqref{formaux}, but now we divide it by  $\mathcal{A}_{n,m} = (-1)^m a_{n,m}$. We obtain,
\[
\left|\frac{\mathcal{A}_{n,j}}{\mathcal{A}_{n,m}}+\sum_{k=j+1}^{m-1} (-1)^{k-j} \widehat{s}_{k,j+1} \frac{\mathcal{A}_{n,k}}{\mathcal{A}_{n,m}}\right|= \left|\frac{a_{n,j}}{a_{n,m}}-\widehat{s}_{m,j+1}\right|,
\]
which is equivalent to
\[
\left|\frac{\mathcal{A}_{n,m-1}}{\mathcal{A}_{n,m}}\right| \left|\frac{(-1)^j \mathcal{A}_{n,j}}{\mathcal{A}_{n,m-1}} + \sum_{k=j+1}^{m-1} (-1)^{k} \widehat{s}_{k,j+1} \frac{\mathcal{A}_{n,k}}{\mathcal{A}_{n,m-1}} \right|= \left|\frac{a_{n,j}}{a_{n,m}}-\widehat{s}_{m,j+1}\right|.
\]
Now, $\widehat{s}_{m-1,j+1}(z) \neq 0, z \in \mathbb{C} \setminus \Delta_{m-1}$; consequently, $\lim_{n\to \infty}|\widehat{s}_{m-1,j+1}(z)|^{1/n} = 1$ uniformly on compact subsets of $\mathbb{C} \setminus \Delta_{m-1}$. This, together with \eqref{Serg} and \eqref{equilibriumjk} gives us
\[
\lim_{n\to \infty} \left|\frac{(-1)^j \mathcal{A}_{n,j}}{\mathcal{A}_{n,m-1}} + \sum_{k=j+1}^{m-1} (-1)^{k} \widehat{s}_{k,j+1} \frac{\mathcal{A}_{n,k}}{\mathcal{A}_{n,m-1}} \right|^{1/n}= 1
\]
uniformly on compact subsets of   $\mathbb{C}\setminus  \cup_{\ell=j+1}^{m}\Delta_{\ell}.$ Then,
\[
\lim_{n\to \infty} \left|\frac{a_{n,j}}{a_{n,m}}-\widehat{s}_{m,j+1}\right|^{1/n}  = \lim_{n\to \infty}  \left|\frac{\mathcal{A}_{n,m-1}}{\mathcal{A}_{n,m}}\right|^{1/n},
\]
uniformly on compact subsets of $\mathbb{C}\setminus  \cup_{\ell=j+1}^{m}\Delta_{\ell}.$
Using again \eqref{Serg} on the limit on the right hand side, it follows that
\begin{equation}\label{demedinaalameca}
\lim_{n\to \infty} \left|\frac{a_{n,j}}{a_{n,m}}-\widehat{s}_{m,j+1}\right|^{1/n}=\exp\left(2V^{ {\lambda}_{m}}(z)-
	V^{ {\lambda}_{m-1}}(z) - 2
	{\omega_m^{\vec{\lambda}}}\right)
\end{equation}
uniformly on compact subsets of $ \mathbb{C}\setminus  \cup_{\ell=j+1}^{m}\Delta_{\ell}$. Notice that for such compact sets $h_{n,j}$ can be taken equal to $1$ in \eqref{City} for all $n$.

\medskip

Let us improve \eqref{demedinaalameca} to cover \eqref{City}. For this, it is sufficient to show that for every $x \in \mathbb{R} \setminus (\Delta_m \cup \Delta_{m-1})$ there exists $\varepsilon > 0$ such that \eqref{City} holds true uniformly on the closed disk $\{z:|z-x| \leq \varepsilon\}$. Fix $x \in \mathbb{R} \setminus (\Delta_m \cup \Delta_{m-1})$ and let $d$ be equal to the distance from $x$ to $\Delta_m \cup \Delta_{m-1}$.

\medskip

For each $n\in \mathbb{N}$, the function $\displaystyle \delta_{n,j} := \frac{a_{n,j}}{a_{n,m}}-\widehat{s}_{m,j+1} \in \mathcal{H} \left(\overline{\mathbb{C}}\setminus \Delta_m\right)$ has at least $n-m+j$ sign changes on $\Delta_{m-1}$ (see  \cite[relation (2.26)]{LMS}). It readily follows that $\delta_{n,j}$ can vanish at most $m-j$ times, counting multiplicities, in $\mathbb{C}\setminus \left(\Delta_{m-1}\cup \Delta_m\right)$. Indeed, the opposite implies  that $a_{n,m}$  verifies at least $n+1$ orthogonality relations on $\Delta_m$ with respect to a measure with constant sign which is impossible since $\deg a_{n,m} = n$. Let $r_{n,j}$ denote the monic polynomial  that vanishes at the zeros of $\delta_{n,j}$ in $\{z: |z-x| \leq d/2\}$. Obviously, $\deg r_{n,j} \leq m-j$.

\medskip

The functions $\delta_{n,j}/r_{n,j}$ are analytic and different from zero in $\{z:|z-x| \leq d/2\}$. Therefore, for each $n$ we can define a branch $f_{n,j} = (\delta_{n,j}/r_{n,j})^{1/n}$ holomorphic in $\{z:|z-x| \leq d/2\}$. Let us show that $(f_{n,j}), n \geq 0,$ is uniformly bounded on $\{z:|z-x| \leq d/4\}$. Due to \eqref{conver2} the sequence $|\delta_{n,j}|, n \geq 0,$ is uniformly bounded above on the annulus $\{z: d/4 \leq |z-x| \leq d/2\}$.

\medskip

Let $x_{n,j,\ell}, \ell=1,\ldots,N_n, N_n \leq m-j,$ be the collection of roots of $r_{n,j}$ and $C_{n,j,\ell}$ the circle of radius $d/10(m-j)$ centered at $x_{n,j,\ell}$. The sum of the diameters of these circles does not exceed
\[ (m-j) \frac{d}{5(m-j)} = \frac{d}{5} < \frac{d}{4}.
\]
Since $d/4$ is the width of the annulus $\{z: d/4 \leq |z-x| \leq d/2\}$, there exists a circle $\gamma_n$ of radius $d_n, d/4 \leq d_n \leq d/2,$ centered at $x$ which does not intersect any of the circles $C_{n,j,\ell}, \ell =1,\ldots,N_n$. On $\gamma_n$, we have
\[ \inf_{z\in \gamma_n} |r_{n,j}(z)| \geq \left(\frac{d}{10(m-j)}\right)^{N_n} \geq \min\left\{\frac{d}{10(m-j)},\left(\frac{d}{10(m-j)}\right)^{m-j}\right\}.
\]
By the maximum principle for analytic functions, we have
\[ \sup_{|z-x| \leq d/4} \left|f_{n,j}(z)\right| \leq \sup_{z \in \gamma_n}  \left|f_{n,j}(z)\right| \leq \]
\[\sup_{d/4 \leq |z -x|\leq d/2 } |\delta_{n,j}(z)|^{1/n}/\inf_{z\in \gamma_n} |r_{n,j}(z)|^{1/n} \leq C,
\]
where $C$ is a constant which does not depend on $n$. Consequently, the sequence of functions $(f_{n,j}), n\geq 0,$ is uniformly bounded on each compact subset of $\{z: |z-x| < d/4\}$.
\medskip

Choose any convergent subsequence
\[ \lim_{n \in \Lambda} f_{n,j} = f_{\Lambda}, \qquad z \in D = \{z: |z-x| < d/4\}.
\]
Without loss of generality, taking a sub-subsequence if necessary, we can assume that $\lim_{n\in \Lambda} r_{n,j} = r_j$, where $r_j$ is a polynomial of degree $\leq m-j$. The function $f_\Lambda$ is analytic in the disk $D$. Since $f_{n,j} \neq 0, n \geq 0,$ in $D$ it follows that $f_{\Lambda}$ is either identically equal to zero or never equals zero in $D$. According to \eqref{demedinaalameca}
\begin{equation} \label{analytic}  |f_{\Lambda}(z)| = \exp\left(2V^{ {\lambda}_{m}}(z)-
	V^{ {\lambda}_{m-1}}(z) - 2
	{\omega_m^{\vec{\lambda}}}\right), \qquad z \in D \setminus (\mathbb{R} \cup \{z: r_j(z) = 0\}).
\end{equation}
Now $f_{\Lambda}$ is analytic in $D$ and the right hand side of \eqref{analytic} is the absolute value of an analytic function in $D$ so by analytic continuation \eqref{analytic} holds on all $D$. Consequently,
\[ \lim_{n \to \infty} |f_{n,j}(z)| = \exp\left(2V^{ {\lambda}_{m}}(z)-
	V^{ {\lambda}_{m-1}}(z) - 2
	{\omega_m^{\vec{\lambda}}}\right),
\]
uniformly on compact subsets of $D$.

\medskip

For an arbitrary compact set $\mathcal{K} \subset \mathbb{C} \setminus (\Delta_m \cup \Delta_{m-1})$, in order to construct $h_{n,j}$ we would have to remove all the zeros of $\frac{a_{n,j}}{a_{n,m}} - \widehat{s}_{m,j+1}$ in a neighborhood of $\mathcal{K} \cap \mathbb{R}$. By what was said above the amount of such zeros does not exceed $m-j$. Then, \eqref{City} is obtained taking an appropriate covering of $\mathcal{K}$. For compact subsets away from the real line \eqref{City} follows from \eqref{demedinaalameca} taking $h_{n,j} \equiv 1$.  We are done.
\end{proof}

\begin{remark} We suspect that the zeros of  $\frac{a_{n,j}}{a_{n,m}} - \widehat{s}_{m,j+1}$ lying in $\mathbb{C} \setminus (\Delta_m \cup \Delta_{m-1})$ accumulate on $\Delta_{m} \cup \Delta_{m-1}$ (more specifically on $\Delta_{m-1}$). If this is true, then \eqref{City}  with $h_{n,j} \equiv 1$ holds on compact subsets of $\mathbb{C} \setminus (\Delta_m \cup \Delta_{m-1})$. Consider $\cup_{\ell=j}^m \Delta_\ell$. This set can be written as the union of disjoint intervals. Of those disjoint intervals, let $\widetilde{\Delta}_{m,j}, \widetilde{\Delta}_{m-1,j}$ be the ones containing $\Delta_m$ and $\Delta_{m-1}$, respectively (they may coincide). Using arguments similar to those employed in the proof of Corollary \ref{cor4}, it is not hard to deduce that \eqref{nivelanterior} takes place uniformly on compact subsets of $\mathbb{C} \setminus (\widetilde{\Delta}_{m,j} \cup \widetilde{\Delta}_{m-1,j})$. Since $\widehat{s}_{m-1,j+1}$ and $a_{n,m-1} - a_{n,m}\widehat{s}_{m,m}$ have no zeros or poles in $\mathbb{C} \setminus (\Delta_m \cup \Delta_{m-1})$ from the argument principle it follows that all the accumulation points of zeros of $a_{n,j} - a_{n,m}\widehat{s}_{m,j+1}$ must be contained in $\widetilde{\Delta}_{m,j} \cup \widetilde{\Delta}_{m-1,j}$. Consequently,  \eqref{City} with $h_{n,j}\equiv 1, n \geq 0,$ holds true uniformly on each compact subset of $\mathbb{C} \setminus (\widetilde{\Delta}_{m,j }\cup \widetilde{\Delta}_{m-1,j})$.
\end{remark}

\begin{remark} \label{rem2} Regarding the forms $\mathcal{B}_{n,j}$ and the polynomials $(P_{n,j}), j=0,\ldots,m, n\in \mathbb{Z}_+$ introduced in Subsection 2.3 in connection with the reversed Nikishin system $\mathcal{N}(\sigma_m,\ldots,\sigma_1)$,  asymptotic formulas analogous to those presented in this Section immediately follow and their formulation is left to the reader. We underline that the corresponding interaction matrix will be exactly the same that we had before and, therefore, we have the same equilibrium problem with the intervals taken in reversed order. An immediate consequence is
the following result.
\end{remark}

\begin{corollary} \label{weakPs}
Under the same hypothesis as in Theorem \eqref{teo4}, we have
\[
*\lim_n \mu_{P_{n,j}} =  {\lambda}_{m -j +1}, \qquad j=1,\ldots,m.
\]
where $\vec{\lambda} = (\lambda_1,\ldots,\lambda_m) \in
\mathcal{M}_1$ is the vector equilibrium measure determined by the
matrix $\mathcal{C}_{\mathcal{N}}$ on the system of compact
sets $\supp{\sigma_j}, j=1,\ldots,m$.
\end{corollary}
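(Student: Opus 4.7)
The plan is to apply Theorem \ref{teo4} directly to the \emph{reversed} Nikishin system $\mathcal{N}(\sigma_m, \sigma_{m-1}, \ldots, \sigma_1)$, as outlined in Remark \ref{rem2}. Recall from Subsection 2.3 that the polynomials $P_{n,j}$ are precisely the analogues of the $Q_{n,j}$ built from the ML Hermite--Pad\'e problem for $\mathcal{N}(\sigma_m,\ldots,\sigma_1)$, and the zeros of $P_{n,j}$ lie in $\stackrel{\circ}{\Delta}_{m-j+1}$. The regularity hypotheses on the $\sigma_j$'s are obviously invariant under reversal of the indexing, so Theorem \ref{teo4} yields probability measures $\tilde{\lambda}_1,\ldots,\tilde{\lambda}_m$ with $\supp{\tilde{\lambda}_j} \subset E_{m-j+1}$ such that
\[
*\lim_n \mu_{P_{n,j}} = \tilde{\lambda}_j, \qquad j=1,\ldots,m,
\]
where $(\tilde{\lambda}_1,\ldots,\tilde{\lambda}_m)$ is the vector equilibrium measure determined by the interaction matrix $\mathcal{C}_{\mathcal{N}}$ on the ordered system of compact sets $(E_m, E_{m-1},\ldots,E_1)$.

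The second step is to identify $\tilde{\lambda}_j$ with $\lambda_{m-j+1}$. The crucial observation is that the Nikishin interaction matrix $\mathcal{C}_{\mathcal{N}}$ is invariant under the reversal $(j,k) \mapsto (m+1-j, m+1-k)$, i.e.\ $c_{j,k} = c_{m+1-j, m+1-k}$. Define $\nu_j := \lambda_{m-j+1}$ for $j=1,\ldots,m$, and consider the system $(E_m, E_{m-1},\ldots,E_1)$. For each $j$, the combined potential with respect to $(\nu_1,\ldots,\nu_m)$ and this reversed matrix satisfies
\[
W_j^{\vec{\nu}}(x) = \sum_{k=1}^m c_{j,k} V^{\nu_k}(x) = \sum_{k=1}^m c_{m+1-j,m+1-k} V^{\lambda_{m+1-k}}(x) = W_{m+1-j}^{\vec{\lambda}}(x),
\]
where the last combined potential is the one associated with the original equilibrium on $(E_1,\ldots,E_m)$. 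Since the support of $\nu_j$ is $\supp{\lambda_{m+1-j}} \subset E_{m+1-j}$, the equilibrium characterization of Lemma \ref{niksor} translates verbatim: $W_j^{\vec{\nu}}$ is constant on $\supp{\nu_j}$ precisely because $W_{m+1-j}^{\vec{\lambda}}$ is constant on $\supp{\lambda_{m+1-j}}$.

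By the uniqueness assertion of Lemma \ref{niksor} applied to the reversed equilibrium problem, we conclude that $\tilde{\lambda}_j = \nu_j = \lambda_{m-j+1}$, which is exactly the statement of the corollary. I do not anticipate any genuine obstacle here: the entire argument is the observation that $\mathcal{C}_{\mathcal{N}}$ is persymmetric (symmetric under reversal of indices) together with uniqueness of the equilibrium measure, and the bulk of the heavy lifting was already carried out in Theorem \ref{teo4}.
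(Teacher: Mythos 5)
Your argument is correct and is essentially the paper's own proof: the paper (Remark \ref{rem2}) likewise applies Theorem \ref{teo4} to the reversed system $\mathcal{N}(\sigma_m,\ldots,\sigma_1)$, notes that the interaction matrix $\mathcal{C}_{\mathcal{N}}$ is unchanged so the equilibrium problem is the same with the intervals in reversed order, and concludes by uniqueness. Your explicit use of the persymmetry $c_{j,k}=c_{m+1-j,m+1-k}$ together with the uniqueness assertion of Lemma \ref{niksor} simply spells out the identification step the paper leaves implicit.
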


\section{Ratio asymptotic}

The ratio asymptotic of multiple orthogonal polynomials is described in terms of the branches of a conformal mapping defined on a Riemann surface associated with the geometry of the problem.

\subsection{The Riemann surface}
Consider the $(m+1)$-sheeted Riemann surface
$$
\mathcal R=\overline{\bigcup_{k=0}^m \mathcal R_k} ,
$$
formed by the consecutively ``glued'' sheets
$$
\mathcal R_0:=\overline {\mathbb{C}} \setminus
\ {\Delta}_1,\quad \mathcal R_k:=\overline {\mathbb{C}}
\setminus ( {\Delta}_k \cup
 {\Delta}_{k+1}),\,\, k=1,\dots,m-1,\quad \mathcal
R_m=\overline {\mathbb{C}} \setminus  {\Delta}_m,
$$
where the upper and lower banks of the slits of two neighboring
sheets are identified. (We remark that the sheets are made up of distinct points.)

\medskip

Let
$\psi $ be a conformal representation of $\mathcal{R}$
onto the extended complex plane satisfying
\[ \psi (z)=\frac{C_{1}}{z}+\mathcal{O}(\frac{1}{z^{2}}),\quad z\rightarrow\infty ^{(0)} \in \mathcal R_0\]
\[ \psi (z)=C_{2}\,z+\mathcal{O}(1),\quad z\rightarrow\infty ^{(m)} \in \mathcal R_m\]
where $C_{1}$ and $C_{2}$ are nonzero constants. Since the genus
of $\mathcal{R}$ is zero, $\psi $ exists and is uniquely
determined up to a multiplicative constant. Consider the branches
of $\psi $ corresponding to the different sheets
$k=0,1,\ldots,m$ of $\mathcal{R}$
\[\psi :=\{\psi_{k}\}_{k=0}^{m}\,. \]
We normalize $\psi $ so that
\begin{equation} \label{mormaliz}
\prod_{k=0}^{m}\,|\psi_{k}(\infty)|=1, \qquad C_1 >0.
\end{equation}

\medskip

Since $\psi $ is such that $C_1 >0,$
then
\[ \psi (z) = \overline{\psi (\overline{z})}, \qquad z \in
\mathcal{R}.
\]
In fact, define $\phi(z) := \overline{\psi (\overline{z})}$.
Notice that $\phi$ and $\psi $ have the same divisor (same poles and zeros counting multiplicities); consequently, there
exists a constant $C$ such that $\phi= C\psi $. Comparing the
leading coefficients of the Laurent expansion of these two functions
at $\infty^{(0)}$,  we conclude that $C=1$.

\medskip

In terms of the branches of $\psi $, the symmetry formula
above means that  for each  $k= 0,1,\ldots,m$:
\[
\psi_k: \overline{\mathbb{R}} \setminus
( {\Delta}_k\cup  {\Delta}_{k+1})
\longrightarrow \overline{\mathbb{R}}
\]
$( {\Delta}_0 =  {\Delta}_{m+1}=\emptyset)$;
therefore, the coefficients (in particular, the leading one) of the
Laurent expansion at $\infty$ of the branches are real numbers, and
\begin{equation}  \label{contacto}\psi_k(x_{\pm}) = \overline{\psi _k(x_{\mp})} =
\overline{\psi_{k+1}(x_{\pm})}, \qquad x \in
{\Delta}_{k+1}. \end{equation}

\medskip

Since $\lim_{x \to \infty} x \psi_0(x) = C_1 > 0$, by continuity it is not hard to deduce that $\psi_k(\infty) > 0, k=1,\ldots,m-1, $ and
$\lim_{x\to \infty} \psi_m(x)/x = \psi_m'(\infty) > 0.$ On the other hand, the product of all the branches $\prod_{k=0}^{m}
\psi_{k}$ is a single valued analytic function on
$\overline{\mathbb{C}}$ without singularities; therefore, by Liouville's
Theorem it is constant. Due to the previous remark and the normalization adopted in \eqref{mormaliz}, we can assert that
\begin{equation}\label{ident} \prod_{k=0}^{m}\,\psi_{k}(z) \equiv 1, \qquad z \in \overline{\mathbb{C}}
\end{equation}

\medskip

In \cite[Lemma 4.2]{AGR}  the following result was proved.

\begin{lemma} \label{unico} The system of boundary value problems
\begin{equation}          \label{eq:syst}
\{F_k \}_{k=1}^m:\quad \aligned &1 )\quad
F_k ,1/F_k  \in H(\mathbb{C} \setminus \Delta_k)\,,
\\
&2 )\quad F _k'(\infty) > 0, \quad k=1,\dots,m\,,
\\
&3 )\quad |F_k (x)|^2
\frac1{\bigl|(F_{k-1} F_{k+1} )(x)\bigr|}= 1
 \,, \quad x \in \Delta_k\,,
\endaligned
\end{equation}
(${F_0} \equiv {F_{m+1}} \equiv 1$) has a unique solution. The solution may be expressed by the formulas
\begin{equation} \label{eq:Fkl}
F_k  :=   \prod_{\nu=k}^m \psi_{\nu}, \qquad k=1,\ldots,m.
\end{equation}
\end{lemma}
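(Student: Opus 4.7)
The plan is to split the proof into existence (verifying the stated formula satisfies the conditions) and uniqueness, which I expect to be the harder direction.

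\emph{Existence.} I would verify directly that $F_k^\star := \prod_{\nu=k}^m \psi_\nu$ satisfies (1)--(3). Condition (1) holds because each $\psi_\nu$ is analytic and non-vanishing on $\mathbb{C}\setminus(\Delta_\nu\cup\Delta_{\nu+1})$, and the gluing relation $\psi_{\nu-1}(x_+)=\psi_\nu(x_-)$ on $\Delta_\nu$ (a direct consequence of \eqref{contacto}) makes the cuts $\Delta_{k+1},\dots,\Delta_m$ cancel pairwise in the telescoping product, leaving only $\Delta_k$ as the singular set for $F_k^\star$ and $1/F_k^\star$. Condition (2) follows from $\psi_m(z)\sim C_2 z$ at infinity together with $\psi_\nu(\infty)>0$ for $1\le\nu\le m-1$, both already noted in the excerpt as consequences of $C_1>0$ and the normalization \eqref{mormaliz}. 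For condition (3), telescoping collapses $(F_k^\star)^2/(F_{k-1}^\star F_{k+1}^\star)$ to $\psi_k/\psi_{k-1}$, and the contact relation on $\Delta_k$ reads $\psi_{k-1}(x_+)=\overline{\psi_k(x_+)}$, making this ratio unimodular.

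\emph{Uniqueness.} Let $\{F_k\}$ be any solution of (1)--(3) and set $G_k:=F_k/F_k^\star$. The properties (1)--(2) carry over to give $G_k,1/G_k\in H(\mathbb{C}\setminus\Delta_k)$ with $G_k$ tending to a positive finite limit at infinity (ratio of positive leading coefficients), while (3) yields $|G_k|^2=|G_{k-1}G_{k+1}|$ on $\Delta_k$. Thus $u_k:=\log|G_k|$ is bounded and harmonic on $\mathbb{C}\setminus\Delta_k$, satisfies $2u_k=u_{k-1}+u_{k+1}$ on $\Delta_k$, and has $u_0\equiv u_{m+1}\equiv 0$. I plan to conclude $u_k\equiv 0$ by propagating extrema with the maximum principle: let $M:=\max_k\sup u_k$ be attained by $u_{k_0}$ at $z_0$. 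If $z_0=\infty$, the maximum principle for bounded harmonic functions with a limit at infinity forces $u_{k_0}\equiv M$; if $z_0\in\Delta_{k_0}$, the averaging identity forces $u_{k_0\pm 1}(z_0)=M$, and since \eqref{nonint} puts $z_0$ in the interior of the domains of $u_{k_0\pm 1}$, the strong maximum principle makes those functions constant $\equiv M$ as well. Iterating up and down the chain must reach $u_0$ or $u_{m+1}$, forcing $M=0$; the mirror argument on the minimum yields $u_k\equiv 0$. Hence $|G_k|\equiv 1$; combined with the positive limit at $\infty$ this pins down $G_k\equiv 1$, so $F_k=F_k^\star$.

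The main obstacle I foresee is making the propagation step fully rigorous: one must verify that $u_k$ has continuous boundary values on $\Delta_k$ from both sides (which follows from the symmetry $G_k(z)=\overline{G_k(\bar z)}$, inherited from the symmetry of $\psi$ established in the excerpt), so that the maximum principle applies cleanly up to the slit, and then check that the extrema cascade across the full index chain without stalling---here the disjointness \eqref{nonint} is essential, as it ensures each $z_0\in\Delta_{k_0}$ lies in the interior of the domains of $u_{k_0\pm 1}$ at every step of the induction.
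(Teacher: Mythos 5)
Your existence argument matches the paper's, with one small caveat: the telescoping identity $(F_k^\star)^2/(F_{k-1}^\star F_{k+1}^\star)=\psi_k/\psi_{k-1}$ requires the relation $\prod_{\nu=0}^m\psi_\nu\equiv 1$ from \eqref{ident} when $k=1$ (there $F_0\equiv 1$, so the ratio first collapses to $\psi_1\prod_{\nu=1}^m\psi_\nu$, which becomes $\psi_1/\psi_0$ only after invoking \eqref{ident}). The paper explicitly cites both \eqref{contacto} and \eqref{ident}; you invoke only the former. For uniqueness, the paper does not give an argument at all---it defers entirely to \cite[Lemma~4.2]{AGR}---so your maximum-principle propagation is a genuine addition. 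The propagation scheme itself is sound: each $u_k$ is harmonic on $\overline{\mathbb{C}}\setminus\Delta_k$ (harmonic at $\infty$ since $G_k(\infty)>0$), so if a global maximum $M$ is attained it must be at a point $z_0\in\Delta_{k_0}$ (else $u_{k_0}$ is constant and you pass to a neighbor), the identity $2u_{k_0}=u_{k_0-1}+u_{k_0+1}$ at $z_0$ forces $u_{k_0\pm1}(z_0)=M$ at an interior point of their domains (thanks to \eqref{nonint}), hence they are constant $\equiv M$, and cascading to $u_0\equiv u_{m+1}\equiv 0$ forces $M=0$; the minimum argument is symmetric, and $|G_k|\equiv 1$ with $G_k(\infty)>0$ gives $G_k\equiv 1$.

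The one real gap is the regularity assertion you flag yourself but then dispose of incorrectly. You claim that continuous boundary values of $u_k$ on $\Delta_k$ ``follow from the symmetry $G_k(z)=\overline{G_k(\bar z)}$.'' This is not so: symmetry only says that, \emph{if} the boundary values from above and below exist, they are complex conjugates (so the moduli agree); it does not produce continuous extension to the slit, nor boundedness of $u_k=\log|G_k|$ near $\Delta_k$. The maximum-principle propagation genuinely needs $u_k$ to be bounded and to have continuous (or at least upper semicontinuous) extension to both sides of $\Delta_k$, so that the supremum is attained and the averaging identity on $\Delta_k$ holds pointwise. In fact, condition $3)$ in \eqref{eq:syst} already presupposes that $F_k$ has well-defined boundary values on $\Delta_k$; to run your argument cleanly you should make explicit that the class of competitors $\{F_k\}$ is restricted to functions continuous (and nonvanishing) up to $\Delta_k$ from both sides, and then derive boundedness of $u_k$ on $\overline{\mathbb{C}}\setminus\Delta_k$ from that continuity together with $G_k(\infty)>0$, rather than from symmetry. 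With that regularity hypothesis stated, the rest of the uniqueness argument holds up.
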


That the functions defined by the product in \eqref{eq:Fkl} verify $1)$ is trivial. From the definition of $\psi$ it is also obvious that $F_k, k=1,\ldots,m$ has a simple pole at $\infty$. Since $\psi_k(\infty) >0, k=1,\ldots,m-1$ and $\psi_m'(\infty) >0$, we also have $2)$. That the boundary conditions $3)$ are satisfied  follows from  \eqref{contacto} and \eqref{ident}. The proof of unicity is more involved and can be checked in \cite[Lemma 4.2]{AGR}.

\subsection{Ratio asymptotic.} We will prove ratio asymptotic for all the polynomials $Q_{n,k}, k=1,\ldots,m,$ at once. Of course, the same can be obtained for the polynomials $P_{n,k}, k=1,\ldots,m$.  The precise statement is the following.

\begin{theorem} \label{teo5} Assume that $\sigma_k' > 0$ a.e. on  $\Delta_k, k=1,\ldots,m$. Then,
\begin{equation}\label{eq30}
\lim_{n\to \infty}\,\frac{Q_{n+1,k}(z)}{Q_{n,k}(z)}= \frac{{F} _{k}(z)}{F_k'(\infty)}, \qquad k=1,\ldots,m,
\end{equation}
uniformly on each compact subset of $\mathbb{C}\setminus \Delta_k$, where $ {F}_{k}$ is defined in \eqref{eq:Fkl}.
\end{theorem}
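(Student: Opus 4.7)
The plan is to follow the classical blueprint for ratio asymptotics of Nikishin-type multi-orthogonal polynomials: produce convergent subsequences by compactness, derive boundary relations for the limits, and invoke the uniqueness of Lemma \ref{unico} to identify them. By Lemma \ref{l3}, the zeros of $Q_{n,k}$ and $Q_{n+1,k}$ interlace in $\stackrel{\circ}{\Delta}_k$, so
\[
\rho_{n,k}(z) := \frac{Q_{n+1,k}(z)}{Q_{n,k}(z)}
\]
is analytic and nonvanishing on $\mathbb{C}\setminus\Delta_k$, with a simple pole at infinity and $\rho_{n,k}(z)/z \to 1$ as $z\to\infty$. A standard normality argument based on interlacing (of the type used in \cite[Theorem 2.1]{AGR}, on which Lemma \ref{l3} is modeled) yields uniform boundedness on compact subsets of $\mathbb{C}\setminus\Delta_k$. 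By a diagonal extraction, choose $\Lambda\subset\mathbb{Z}_+$ so that, for each $k=1,\ldots,m$,
\[
\lim_{n\in\Lambda} \rho_{n,k}(z) = G_k(z),
\]
uniformly on compact subsets of $\mathbb{C}\setminus\Delta_k$, with $G_k,\,1/G_k\in\mathcal{H}(\mathbb{C}\setminus\Delta_k)$ and $G_k(z)/z \to 1$ at $\infty$.

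The second, and most delicate, step is to derive boundary relations for the $G_k$. From Lemma \ref{l4}, $Q_{n,k}$ is orthogonal with respect to the varying measure
\[
d\mu_{n,k}(x) := \frac{|\mathcal{H}_{n,k}(x)|\,d\sigma_k(x)}{|Q_{n,k-1}(x) Q_{n,k+1}(x)|},
\]
which has constant sign on $\Delta_k$ by the Remark after Lemma \ref{l4}. Under the Denisov--Rakhmanov hypothesis $\sigma_k' > 0$ a.e., the weak asymptotic of Theorem \ref{teo4} controls the logarithmic potentials of the varying factors and the convergence of $\rho_{n,j}$ for $j\neq k$ along $\Lambda$ controls their ratios; together these inputs are exactly what a Rakhmanov-type theorem for orthogonal polynomials with respect to varying measures requires. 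The output is the convergence of the normalizing $L^2$-constants (consistent with \eqref{eq:4*}) together with a boundary identity of the form
\[
\frac{|G_k(x)|^2}{|G_{k-1}(x)\,G_{k+1}(x)|} = c_k, \qquad x\in\Delta_k,
\]
for suitable positive constants $c_k$, with the convention $G_0\equiv G_{m+1}\equiv 1$.

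Third, choose positive scalars $\alpha_k$ so that $\widehat{G}_k := G_k/\alpha_k$ verifies $|\widehat{G}_k|^2 / |\widehat{G}_{k-1}\widehat{G}_{k+1}| = 1$ on $\Delta_k$; such $\alpha_k$ exist and are determined (the system of equations for $\log\alpha_k$ is triangular once the $c_k$ are known). Then $\widehat{G}_k$ satisfies the three conditions of \eqref{eq:syst}, so by Lemma \ref{unico}, $\widehat{G}_k = F_k = \prod_{\nu=k}^m \psi_\nu$, i.e.\ $G_k = \alpha_k F_k$. Matching $G_k(z)/z \to 1$ with $F_k(z)/z \to F_k'(\infty)$ at infinity forces $\alpha_k = 1/F_k'(\infty)$, hence $G_k = F_k/F_k'(\infty)$. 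Since the subsequential limit is independent of $\Lambda$, the full sequence converges, yielding \eqref{eq30}.

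The main obstacle is the second step: rigorously establishing the Rakhmanov--Denisov theorem for the varying measures $d\mu_{n,k}$. Unlike the classical case, the varying weights mix three adjacent levels of the system through $\mathcal{H}_{n,k}$ and $Q_{n,k\pm 1}$, so one must show that the ratio $d\mu_{n+1,k}/d\mu_{n,k}$ stabilizes along $\Lambda$ in a mode strong enough to trigger Denisov's refinement of Rakhmanov's theorem on the fixed part $\sigma_k'>0$. All the ingredients required---convergence of $n$-th root behavior from Theorem \ref{teo4} and Corollary \ref{Medina1}, constant-signedness from the Remark after Lemma \ref{l4}, and normality of the ratios $\rho_{n,j}$---are in place, and assembling them into a clean Rakhmanov-type statement is the technical heart of the argument.
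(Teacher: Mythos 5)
Your overall architecture is the same as the paper's: normality of the ratios via the interlacing of Lemma \ref{l3}, extraction of subsequential limits $G_k$, a boundary relation of the form $|G_k|^2/|G_{k-1}G_{k+1}| = \mathrm{const}$ on $\Delta_k$, renormalization by positive constants, and identification through the uniqueness statement of Lemma \ref{unico}. But the step you yourself flag as ``the technical heart'' is precisely the step you do not carry out, and it is a genuine gap: you assert that a ``Rakhmanov--Denisov type theorem for the varying measures $d\mu_{n,k}$'' delivers the boundary identity, without stating such a theorem, proving it, or citing one that applies. The weak ($n$-th root) asymptotics of Theorem \ref{teo4} and Corollary \ref{Medina1} are far too coarse to produce a pointwise boundary relation for the limits $G_k$; ratio asymptotics is strictly stronger information and cannot be extracted from $n$-th root limits. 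What actually closes this step in the paper is a pair of existing results for orthogonal polynomials with respect to \emph{varying} measures: one factors $Q_{n+1,k}/Q_{n,k} = \bigl(Q_{n+1,k}/Q_{n,k}^*\bigr)\bigl(Q_{n,k}^*/Q_{n,k}\bigr)$, where $Q_{n,k}^*$ is the $n$-th monic orthogonal polynomial with respect to the $(n+1)$-level varying measure \eqref{eq:var2}; Theorem 6 of \cite{kn:B-G} gives $Q_{n+1,k}/Q_{n,k}^* \to \varphi_k/\varphi_k'(\infty)$, Theorem 9 of \cite{kn:B-G} gives the stabilization $|h_{n+1,k}|/|h_{n,k}| \to 1$ on $\Delta_k$ (via the explicit limit \eqref{debil}), and Theorem 2 of \cite{DBG} (relative asymptotics for varying Denisov-type measures) gives $Q_{n,k}^*/Q_{n,k} \to S_k/S_k(\infty)$, where $S_k$ is the Szeg\H{o} function of the weight $|G_{k-1}G_{k+1}|^{-1}$ coming from your limit \eqref{eq:*}. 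The boundary identity with constant $w_k = (S_k(\infty)\varphi_k'(\infty))^2$ then follows from the defining property of $S_k$. Without invoking (or reproving) these varying-measure theorems, your second step is an assertion, not a proof, and the hypothesis $\sigma_k'>0$ a.e.\ enters nowhere in your argument except by name.

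Two smaller points. First, your claim that the system determining the normalizing constants is ``triangular'' is incorrect: writing $c_k$ for the constants, the equations are $2\log c_k - \log c_{k-1} - \log c_{k+1} = \log w_k$, $k=1,\ldots,m$, a tridiagonal (discrete Laplacian type) system; solvability comes from the nonvanishing of its determinant, as the paper notes, not from triangularity. Second, in your first step you assert $1/G_k \in \mathcal{H}(\mathbb{C}\setminus\Delta_k)$ directly from interlacing; this is indeed standard (the reciprocal ratios are Cauchy transforms of probability measures on the compact set $\Delta_k$, hence their limits are nonvanishing off $\Delta_k$), but a sentence to that effect is needed, since condition $1)$ of \eqref{eq:syst} is used essentially in the uniqueness argument of Lemma \ref{unico}.
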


\begin{proof}
For the proof   we proceed as follows. From Lemma \ref{l3}, for each $k=1,\ldots,m$ the family of functions $(Q_{n+1,k}/Q_{n,k}), n \in \mathbb{Z}_+,$ is uniformly bounded on each compact subset of $\mathbb{C} \setminus \Delta_k$. To prove \eqref{eq30} it suffices to show that for any $\Lambda \subset \mathbb{Z}_+$ such that
\begin{equation}\label{eq30*}
\lim_{n \in \Lambda}\,\frac{Q_{n+1,k}(z)}{Q_{n,k}(z)}= G_{k}(z), \qquad k=1,\ldots,m,
\end{equation}
exists, the limiting functions $G_k$ do not depend on $\Lambda$. To achieve this, we will prove that there are positive constants $c_1,\ldots,c_m$ for which the collection of functions  $\{c_k G_k\}_{k=1}^m$ verifies the system \eqref{eq:syst}. Once this is done, using Lemma \ref{unico} and the fact that $G_k'(\infty) = 1, k=1,\ldots,m$, one obtains that
\begin{equation} \label{ratio}
c_kG_k = F_k, \qquad c_k = F_k'(\infty), \qquad k=1,\ldots,m,
\end{equation}
and \eqref{eq30} follows.

\medskip

Obviously, the functions in $\{G_k \}_{k=1}^m$ satisfy $1)$ and as mentioned before $G_k'(\infty) =1, k=1,\ldots,m,$ so $2)$ also takes place. We show that boundary conditions of type $3)$ are also valid with different values on the right hand side. In order to prove this, we use results on ratio and relative asymptotic of polynomials orthogonal with respect to varying measures developed in \cite{kn:B-G}, \cite{kn:Gui1}, \cite{kn:Gui2}.

\medskip

Along with the constants $K_{n,k}, k=1,\ldots,m$ defined in \eqref{Knj} we also consider the constants
\[ K_{n,m+1} = 1 \;, \quad \kappa_{n,k} = \frac{K_{n,k}}{K_{n,k+1}}
\;, \quad k=1,\ldots,m \;.
\]
Define
\begin{equation} \label{eq:orton} q_{n,k} =
\kappa_{n,k}Q_{n,k} \;, \qquad {h}_{n,k} =
K_{n,k+1}^2 \mathcal{H}_{n,k} \;, \qquad k = 1,\ldots,m \;.
\end{equation}
where $\mathcal{H}_{n,k}$ was defined in \eqref{Hnj}. From \eqref{int3} it follows that for each $k=1,\ldots,m$
\begin{equation} \label{int6}
\int x^{\nu}  Q_{n,k}(x)  \frac{|h_{n,k}(x)|{\rm d} \sigma_{k}(x)}{|Q_{n,k-1}(x)Q_{n,k+1}(x)|} = 0, \qquad \nu = 0,\ldots, n-1.
\end{equation}
Therefore, $Q_{n,k}$ is the $n$-th monic orthogonal polynomial with respect to the varying measure
\[ {\rm d} \rho_{n,k}(x) := \frac{|h_{n,k}(x)|{\rm d}\sigma_{k}(x)}{|Q_{n,k-1}(x)Q_{n,k+1}(x)|}
\]
and $q_{n,k}$ is the $n$-th orthonormal polynomial with respect to the same varying measure.

\medskip

Reasoning as before, we obtain that $Q_{n+1,k}$ and $q_{n+1,k}$ are the monic orthogonal and the orthonormal polynomials,
respectively, with respect to the varying measure
\begin{equation} \label{eq:var2}
\frac{|h_{n+1,k}(x)|{\rm d}\sigma_k(x)}{|Q_{n+1,k-1}(x)Q_{n+1,k+1}(x)|} = \frac{|h_{n+1,k}(x)|}{|h_{n,k}(x)|} \frac{|Q_{n,k-1}(x)Q_{n,k+1}(x)|}{|Q_{n+1,k-1}(x)Q_{n+1,k+1}(x)|}
{\rm d}\rho_{n,k}(x) \,.
\end{equation}

On account of \eqref{eq30*}
\begin{equation} \label{eq:*}
\lim_{n \in \Lambda } \frac{|Q_{n,k-1}(x)Q_{n,k+1}(x)|}{|Q_{n+1,k-1}(x)Q_{n+1,k+1}(x)|} =
\frac{1}{|G_{k-1}(x)G_{k+1}(x)|},
\qquad k = 1,\ldots,m\,.
\end{equation}
uniformly on $\Delta_k$ ($G_{0}  =
 G_{m+1} = 1$). On the other hand,  from \eqref{int4} it follows that
 \begin{equation} \label{int4*}  {h}_{n,k}(z) = \int \frac{ q^2_{n,k+1}(x)}{z-x} \frac{ {h}_{n,k+1}(x){\rm d}\sigma_{k+1}(x)}{Q_{n,k}(x)Q_{n,k+2}(x)}.
\end{equation}
and using Theorem 9 of \cite{kn:B-G} we get
\begin{equation} \label{debil} \lim_{n \to \infty} |h_{n, k}(z)| = \frac{1}{|\sqrt{(z-b_{k+1})(z -
a_{k+1})}|},
\qquad k=0,\ldots,m-1,
\end{equation}
uniformly on compact subsets of $\mathbb{C} \setminus \Delta_{k+1} $,
where $\Delta_{k+1} = [a_{k+1},b_{k+1}]$ (notice that from the definition we
have  $h_{n,m} \equiv h_{n+1,m} \equiv 1).$ Therefore,
\[ \lim_{n \to \infty} \frac{|h_{n+1,k}(x)|}{|h_{n,k}(x)|} = 1\,, \qquad k=1,\ldots,m\,,
\]
uniformly on $\Delta_k$ which combined with \eqref{eq:*} gives
\begin{equation}     \label{eq:HH}
\lim_{n\in \Lambda }  \frac{|h_{n+1,k}(x)|}{|h_{n,k}(x)|} \frac{|Q_{n,k-1}(x)Q_{n,k+1}(x)|}{|Q_{n+1,k-1}(x)Q_{n+1,k+1}(x)|} =
\frac{1}{|G_{k-1}(x)G_{k+1}(x)|},
\end{equation}
uniformly on $\Delta_k .$ The function on the right hand side of
this relation is continuous and different from zero on $\Delta_k.$

\medskip

Fix $k=1,\ldots,m.$ Let $Q_{n,k}^*$ be the $n$-th monic orthogonal polynomial with respect to the measure in \eqref{eq:var2}.
Write
\[ \frac{Q_{n+1,k}}{Q_{n,k}} = \frac{Q_{n+1,k}}{Q_{n,k}^*}\frac{Q_{n,k}^*}{Q_{n,k}}.
\]
Using the result on ratio asymptotic of orthogonal polynomials with respect to varying measures given in \cite[Theorem 6]{kn:B-G} it follows that
\begin{equation} \label{f1} \lim_{n \to \infty} \frac{Q_{n+1,k}(z)}{Q_{n,k}^*(z)} = \frac{\varphi_{k}(z)}{\varphi_{k}'(\infty)},
\end{equation}
uniformly on compact subsets of $\mathbb{C} \setminus \Delta_k$, where $\varphi_{k}$ denotes the conformal representation of $\overline{\mathbb{C}} \setminus \Delta_k$ onto $\{w: |w| > 1\}$ such that $\varphi_{k}(\infty) = \infty$ and $\varphi_{k}'(\infty) >0$. On the other hand, due to \eqref{eq:var2} and \eqref{eq:HH}, the result on relative asymptotic of orthogonal polynomials with respect to varying measures contained in \cite[Theorem 2]{DBG}   establishes that
\begin{equation} \label{f2} \lim_{n \in \Lambda} \frac{Q_{n,k}^*(z)}{Q_{n,k}(z)} = \frac{S_{k}(z)}{S_{k}(\infty)},
\end{equation}
where $S_k$ is the Szeg\H{o} function on $\overline{\mathbb{C}}
\setminus \Delta_k$ with respect to the weight
\[
{|G_{k-1}(x)G_{k+1}(x)|^{-1}}, \qquad
x \in \Delta_k;\]
therefore.
\begin{equation} \label{f3} |S_k(x)|^2 |G_{k-1}(x)G_{k+1}(x)|^{-1} = 1, \qquad x \in \Delta_k.
\end{equation}

\medskip

From \eqref{eq30*}, \eqref{f1}, and \eqref{f2}, it follows that
\begin{equation} \label{f5} \lim_{n\in \Lambda}\,\frac{Q_{n+1,k}(z)}{Q_{n,k}(z)}= G_k(z) = \frac{S_k(z) \varphi_k(z)}{S_k(\infty)\varphi_k'(\infty)}, \qquad k=1,\ldots,m.
\end{equation}
which combined with \eqref{f3} implies that for $x \in \Delta_k$, and $k=1,\ldots,m$
\begin{equation} \label{f4}
\frac{|G_k(x)|^2}{|G_{k-1}(x)G_{k+1}(x)|} = \frac{|S_k(x) \varphi_k(x)|^2}{(S_k(\infty)\varphi_k'(\infty))^2|G_{k-1}(x)G_{k+1}(x)|} = \frac{1}{w_k}
\end{equation}
where $w_k = (S_k(\infty)\varphi_k'(\infty))^2$
Therefore, the collection of functions $\{G_k\}_{k=1}^m$ fulfills \eqref{eq:syst} with right hand side in $3)$ equal to $1/w_k$.

\medskip

In order to get the correct value on the right hand side we need to find positive constants $c_k, k=1,\ldots,m$ such that
\[ \frac{c_k^2}{w_kc_{k-1}c_{k+1}} =1, \qquad c_0 = c_{m+1}= 1.
\]
Taking logarithms, it is sufficient to notice that the system of equations
\begin{equation}\label{norm-equa} 2\log c_k - \log c_{k-1} - \log c_{k+1} = \log w_k, \qquad k=1,\ldots,m
\end{equation}
has a solution since the determinant of this system is different from zero.
\end{proof}

The following Corollary complements Theorem \ref{teo5}.

\begin{corollary} \label{cor3}
Assume that $\sigma_k' >0$ a.e. on  $\Delta_k, k=1,\ldots,m$. Let $\{q_{n,k} = \kappa_{n,k}Q_{n,k} \}_{k=1}^m ,n\in \mathbb{Z}_+,$ be the
system of orthonormal polynomials defined in $(\ref{eq:orton})$
and $\{K_{n,k}\}_{k=1}^m ,n\in\mathbb{Z}_+,$ the values
given in $(\ref{Knj})$. Then, for each fixed $k = 1,\ldots,m,$ we
have
\begin{equation} \label{eq:xa}\lim_{n\in
{\Lambda}}\frac{\kappa_{n+1,k}}{\kappa_{n,k}}=
\kappa_k\,,
\end{equation}
\begin{equation} \label{eq:xk}
\lim_{n\in {\Lambda}}\frac{K_{n+1,k}}{K_{n,k}}= \kappa _1\cdots\kappa _k\,,
\end{equation}
and
\begin{equation} \label{eq:xb}
\lim_{n\in {\Lambda}}\frac{q_{n+1,k}(z)}{q_{n,k}(z)}= \kappa _k \frac{{F_k }(z)}{F_k'(\infty)},
\end{equation}
uniformly on compact subsets $\mathbb{C} \setminus \Delta_k$,  where
\begin{equation} \label{eq:xc} \kappa _k =
\frac{c_{k} }{\sqrt{c_{k-1}  c_{k+1} }}\,, \qquad
c_{k}  =
F_k^{\prime}(\infty), \qquad  k=1,\ldots,m.
\end{equation}
where $c_0 = c_{m+1} = 1$. We also have
\begin{equation} \label{a} \lim_{n\to \infty} \left|\frac{\mathcal{A}_{n+1,k}(z)}{\mathcal{A}_{n,k}(z)}\right| = \frac{1}{\kappa_1^2\cdots\kappa_{k+1}^2} \frac{F_{k+1}'(\infty)}{F_{k}'(\infty)}\left|\frac{F_{k}(z)}{F_{k+1}(z)}\right|, \qquad k=0,\ldots,m-1,
\end{equation}
uniformly on compact subsets of $\mathbb{C} \setminus (\Delta_k \cup \Delta_{k+1}).$ When $k=0$, $\Delta_0 = \emptyset$ and the factors $F_0$ and $F_0'(\infty)$ are substituted by $1$. Finally
\begin{equation} \label{b} \lim_{n\to \infty}  \frac{a_{n+1,k}(z)}{a_{n,k}(z)}  =  \frac{F_{m}(z)}{F_{m}'(\infty)}, \qquad k=0,\ldots,m-1,
\end{equation}
uniformly on compact subsets of $\mathbb{C} \setminus \Delta_m$.
\end{corollary}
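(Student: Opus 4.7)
The plan is to establish the five relations in sequence: first prove \eqref{eq:xa} and \eqref{eq:xb} jointly, then derive \eqref{eq:xk} by telescoping, then \eqref{a} via the factorization of $\mathcal{A}_{n,k}$ through $\mathcal{H}_{n,k}$, and finally \eqref{b} by reduction to the case $k=m$ using Corollary \ref{cor1}.

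The crux is \eqref{eq:xa} and \eqref{eq:xb}. By construction, $q_{n,k}=\kappa_{n,k}Q_{n,k}$ is the $n$-th orthonormal polynomial with respect to the varying measure $d\rho_{n,k}=|h_{n,k}|d\sigma_k/|Q_{n,k-1}Q_{n,k+1}|$; see \eqref{int6}. In the proof of Theorem \ref{teo5} the ratio $d\rho_{n+1,k}/d\rho_{n,k}$ was shown to converge uniformly on $\Delta_k$ to the strictly positive continuous weight $1/|G_{k-1}G_{k+1}|$ (relation \eqref{eq:HH}). Applying the ratio and relative asymptotic theorems for orthonormal polynomials with respect to varying measures from \cite{kn:B-G} and \cite{DBG}, the leading coefficient ratio $\kappa_{n+1,k}/\kappa_{n,k}$ has a limit along $\Lambda$. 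Combined with the monic asymptotic $Q_{n+1,k}/Q_{n,k}\to F_k/c_k$ from Theorem \ref{teo5} and the factorization $G_k=S_k\varphi_k/(S_k(\infty)\varphi_k'(\infty))$ from \eqref{f5}, we identify this limit as $S_k(\infty)\varphi_k'(\infty)=\sqrt{w_k}$. Using $w_k=c_k^2/(c_{k-1}c_{k+1})$ extracted from \eqref{f4}, this equals $\kappa_k=c_k/\sqrt{c_{k-1}c_{k+1}}$, which proves \eqref{eq:xa}; then \eqref{eq:xb} follows by multiplication with the monic ratio.

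The remaining relations are consequences. For \eqref{eq:xk}, telescoping $\kappa_{n,\ell}=K_{n,\ell}/K_{n,\ell+1}$ starting from $K_{n,m+1}=1$ gives $K_{n,k}=\prod_{\ell=k}^m\kappa_{n,\ell}$, so $K_{n+1,k}/K_{n,k}$ converges to the corresponding product of $\kappa_\ell$'s by \eqref{eq:xa}. For \eqref{a}, use the identity $\mathcal{A}_{n,k}=Q_{n,k}\mathcal{H}_{n,k}/Q_{n,k+1}$ from \eqref{Hnj} together with $\mathcal{H}_{n,k}=h_{n,k}/K_{n,k+1}^2$ to decompose
\[
\frac{\mathcal{A}_{n+1,k}}{\mathcal{A}_{n,k}}=\frac{Q_{n+1,k}}{Q_{n,k}}\cdot\frac{Q_{n,k+1}}{Q_{n+1,k+1}}\cdot\Bigl(\frac{K_{n,k+1}}{K_{n+1,k+1}}\Bigr)^{\!2}\cdot\frac{h_{n+1,k}}{h_{n,k}}.
\]
Theorem \ref{teo5} controls the first two factors, \eqref{eq:xk} the third, and \eqref{debil} forces $h_{n+1,k}/h_{n,k}\to 1$ uniformly on compact subsets of $\mathbb{C}\setminus\Delta_{k+1}$. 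Substituting $c_k=F_k'(\infty)$ produces \eqref{a}.

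For \eqref{b}, Corollary \ref{cor1} gives $a_{n,j}/a_{n,m}\to\widehat{s}_{m,j+1}$ uniformly on compact subsets of $\mathbb{C}\setminus\Delta_m$, where $\widehat{s}_{m,j+1}$ does not vanish; hence
\[
\frac{a_{n+1,j}}{a_{n,j}}=\frac{a_{n+1,j}/a_{n+1,m}}{a_{n,j}/a_{n,m}}\cdot\frac{a_{n+1,m}}{a_{n,m}}
\]
has first factor tending to $1$, and the second factor equals $Q_{n+1,m}/Q_{n,m}$ (since $a_{n,m}=(-1)^m Q_{n,m}$), which converges to $F_m/F_m'(\infty)$ by Theorem \ref{teo5}. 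The principal obstacle throughout is pinning down the constant in \eqref{eq:xa}: this requires simultaneously invoking the varying-measure ratio asymptotic, the relative asymptotic involving the Szeg\H{o} function $S_k$, and the algebraic identities $w_k=\kappa_k^2=c_k^2/(c_{k-1}c_{k+1})$ emerging from the proof of Theorem \ref{teo5}.
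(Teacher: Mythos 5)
Your proposal reproduces the paper's own proof essentially step for step: the full-sequence limit in \eqref{eq:HH} together with the ratio and relative asymptotics for varying measures from \cite{kn:B-G} and \cite{DBG} give $q_{n+1,k}/q_{n,k}\to S_k\varphi_k$, dividing by the monic limit \eqref{eq30} identifies $\kappa_k=\sqrt{w_k}=c_k/\sqrt{c_{k-1}c_{k+1}}$ with $c_k=F_k'(\infty)$, and \eqref{eq:xk}, \eqref{a}, \eqref{b} are then obtained exactly as in the paper by telescoping the $\kappa_{n,\ell}$, by the decomposition of $\mathcal{A}_{n,k}$ through $h_{n,k}=K_{n,k+1}^2\mathcal{H}_{n,k}$ combined with \eqref{debil}, and by the reduction to $a_{n,m}=(-1)^mQ_{n,m}$ via \eqref{conver2}. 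The only point of divergence is an indexing matter in the paper itself: your telescoping correctly gives $K_{n,k}=\prod_{\ell=k}^{m}\kappa_{n,\ell}$ from the definitions $\kappa_{n,\ell}=K_{n,\ell}/K_{n,\ell+1}$, $K_{n,m+1}=1$, so the limit in \eqref{eq:xk} (and the corresponding constant in \eqref{a}) comes out as $\kappa_k\cdots\kappa_m$ rather than the $\kappa_1\cdots\kappa_k$ written in the statement and in the paper's proof line $K_{n,k}=\kappa_{n,1}\cdots\kappa_{n,k}$ — a slip in the paper's indices, not a gap in your argument, though it would be worth stating the corrected product explicitly rather than leaving it as ``the corresponding product.''
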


{\bf Proof.} By  \eqref{eq30} we have limit in
(\ref{eq:HH}) as $n \to \infty$.  Reasoning as in
the deduction of formula \eqref{f5}
but now in connection with orthonormal polynomials (see
\cite{kn:B-G})  it follows that
\[
\lim_{n\to \infty} \frac{Q_{n+1,k}(z)}{Q_{n,k}(z)}=  \frac{(S_k\varphi_k)(z)}{(S_k\varphi'_k)(\infty)} \,, \qquad k = 1,\ldots,m,
\]
and
\begin{equation} \label{qnk}
\lim_{n\to \infty} \frac{q_{n+1,k}(z)}{q_{n,k}(z)}=  (S_k\varphi_k)(z) \,, \qquad k = 1,\ldots,m,
\end{equation}
uniformly on compact subsets of $\mathbb{C}\setminus \Delta_k$. Dividing the second of these limits by the first it follows that
\[ \lim_{n\to \infty}\frac{\kappa_{n+1,k}}{\kappa_{n,k}}= \kappa_k = \sqrt{w_k} =
\frac{c_k}{\sqrt{c_{k-1}c_{k+1}}}\,,
\]
where $w_k = (S_k(\infty)\varphi'(\infty))^2$ and the $c_k$ are
the normalizing constants found  solving the linear
system of equations \eqref{norm-equa}.  In \eqref{ratio} we saw that $c_k = F_k^{\prime}(\infty), k=1,\ldots,m$  and formula \eqref{eq:xa}  follows with $c_k$ as in \eqref{eq:xc}. Then, \eqref{eq:xb} is a consequence of \eqref{eq:xa} and \eqref{eq30}.

\medskip

From the definition of $\kappa_{n,k}\,,$ we have that
\[ K_{n,k} = \kappa_{n,1}\cdots\kappa_{n,k} \,.
\]
Taking the ratio of these constants for the multi-indices $n$ and $n+1$ and using (\ref{eq:xa}), we get (\ref{eq:xk}).

\medskip

Combining \eqref{Hnj}, \eqref{eq:orton}. and  \eqref{int4*} we obtain the formula
\[ \mathcal{A}_{n,k}(z) = \frac{1}{K_{n,k+1}^2}\frac{Q_{n,k}(z)}{Q_{n,k+1}(z)} \int \frac{q_{n,k+1}^2(x)}{z-x} \frac{h_{n,k+1}(x) \mbox{d}\sigma_{k+1}(x)}{Q_{n,k}(x)Q_{n,k+2}(x)}, \qquad k=0,\ldots,m-1.
\]
Taking the absolute value of the ratio of these expressions for $n$ and $n+1$, on account of \eqref{eq30}, \eqref{eq:xk}, and \eqref{debil} relation \eqref{a} immediately follows.

\medskip

According to \eqref{conver2}
\[ \lim_{n\to \infty} \frac{a_{n+1,k}(z)}{a_{n+1,m}(z)}\frac{a_{n,m}(z)}{a_{n,k}(z)} = \lim_{n\to \infty} \frac{a_{n+1,k}(z)}{a_{n,k}(z)}\frac{a_{n,m}(z)}{a_{n+1,m}(z)} = 1,\]
uniformly on compact subsets of $\mathbb{C} \setminus \Delta_m$. However, $a_{n,m} = (-1)^m Q_{n,m}$ and, therefore,
\[ \lim_{n \to \infty} \frac{a_{n+1,m}(z)}{a_{n,m}(z)} = \frac{F_m(z)}{F_{m}'(\infty)}
\]
so \eqref{b} takes place. We are done.  \hfill $\square$

\medskip

\begin{remark} From   $(\ref{eq:xa}),$ and $(\ref{eq:xk}),$
it follows that for each $k = 1,\ldots,m$
\[ \lim_{n \to \infty} \kappa_{n,k}^{1/n} =
\kappa_k, \qquad \lim_{n \to \infty} K_{n,k}^{1/n} = \kappa_1 \cdots \kappa_k,
\]
and
\[\lim_{n \to \infty} |Q_{n,k}(z)|^{1/n} =
| {F_k}(z)/F_k'(\infty)|,
\]
uniformly on compact subsets of ${\mathbb{C}}\setminus \Delta_k$. Compare with \eqref{conv-Qnj}.
\end{remark}

\begin{remark} \label{pes} Since the generating measures of $\mathcal{N}(\sigma_m,\ldots,\sigma_1)$ and $\mathcal{N}(\sigma_1,\ldots,\sigma_m)$ are the same but in reversed order, from Theorem \ref{teo5} and Corollary \ref{cor3} similar results can be formulated for the polynomials $(P_{n,k}), n \in \mathbb{Z}_+, k=1,\ldots,m,$ the corresponding orthonormal polynomials, their leading coefficients and the associated linear forms. The specific statements are left to the reader. We limit ourselves to the following statement.
\end{remark}

\begin{corollary} \label{pesratio}Under the assumptions of Theorem \ref{teo5}, we have
\begin{equation} \label{pesratio*} \lim_{n\to \infty}\,\frac{P_{n+1,k}(z)}{P_{n,k}(z)}=\frac{F_{m-k+1}(z)}{F_{m-k+1}'(\infty)}, \qquad k=1,\ldots,m,
\end{equation}
uniformly on compact subsets of $\mathbb{C} \setminus \Delta_{m-k+1}$.
\end{corollary}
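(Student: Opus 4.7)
The plan is to apply Theorem \ref{teo5} to the reversed Nikishin system $\mathcal{N}(\sigma_m,\ldots,\sigma_1)$ and then translate the result back via an identification of Riemann surfaces. As noted in Remark \ref{pes} and Subsection 2.3, the polynomials $P_{n,k}$ play for the reversed system exactly the role that $Q_{n,k}$ play for the direct one, and the hypothesis $\sigma_k' > 0$ a.e.\ on $\Delta_k$ is unaffected by reversing the order of the measures. Theorem \ref{teo5} therefore yields directly
\[
\lim_{n\to\infty}\frac{P_{n+1,k}(z)}{P_{n,k}(z)} = \frac{F^{\mathrm{rev}}_k(z)}{(F^{\mathrm{rev}}_k)'(\infty)},
\]
uniformly on compact subsets of $\mathbb{C}\setminus\Delta_{m-k+1}$, where $F^{\mathrm{rev}}_k$ is the function from Lemma \ref{unico} built out of the conformal map $\psi^{\mathrm{rev}}$ of the Riemann surface $\mathcal{R}^{\mathrm{rev}}$ associated with the reversed system. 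Proving Corollary \ref{pesratio} thus reduces to the identity $F^{\mathrm{rev}}_k = F_{m-k+1}$.

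Next I would describe $\mathcal{R}^{\mathrm{rev}}$ explicitly. Its sheets are $\mathcal{R}^{\mathrm{rev}}_0=\overline{\mathbb{C}}\setminus\Delta_m$, $\mathcal{R}^{\mathrm{rev}}_j=\overline{\mathbb{C}}\setminus(\Delta_{m-j+1}\cup\Delta_{m-j})$ for $1\le j\le m-1$, and $\mathcal{R}^{\mathrm{rev}}_m=\overline{\mathbb{C}}\setminus\Delta_1$, so that the relabeling $j\mapsto m-j$ identifies $\mathcal{R}^{\mathrm{rev}}$ with $\mathcal{R}$. Under this identification $\infty^{(0)}_{\mathrm{rev}}=\infty^{(m)}$ and $\infty^{(m)}_{\mathrm{rev}}=\infty^{(0)}$, and the conformal map $\psi^{\mathrm{rev}}$ is characterized (up to a multiplicative constant fixed by the analog of \eqref{mormaliz}) by having a simple zero at $\infty^{(0)}_{\mathrm{rev}}$ and a simple pole at $\infty^{(m)}_{\mathrm{rev}}$, which is exactly the divisor of $1/\psi$ after the sheet relabeling. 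Checking that $1/\psi_{m-j}$ satisfies the boundary matching \eqref{contacto} on $\mathcal{R}^{\mathrm{rev}}$ (inversion preserves it) and the normalization of $\psi^{\mathrm{rev}}$ (inherited automatically from that of $\psi$, with $C_1^{\mathrm{rev}}=1/C_2>0$), uniqueness of the conformal map on the genus-zero surface forces
\[
\psi^{\mathrm{rev}}_j(z)=\frac{1}{\psi_{m-j}(z)},\qquad j=0,\ldots,m.
\]

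With this identification in hand, the identity \eqref{ident} completes the computation:
\[
F^{\mathrm{rev}}_k=\prod_{\nu=k}^{m}\psi^{\mathrm{rev}}_\nu=\prod_{\nu=k}^{m}\frac{1}{\psi_{m-\nu}}=\prod_{j=0}^{m-k}\frac{1}{\psi_j}=\prod_{j=m-k+1}^{m}\psi_j=F_{m-k+1},
\]
which substituted in the limit from Theorem \ref{teo5} yields \eqref{pesratio*}. The only non-routine step is the identification $\psi^{\mathrm{rev}}_j=1/\psi_{m-j}$; once this is in place the remainder is just bookkeeping with the product formula \eqref{eq:Fkl} and the identity \eqref{ident}. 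That step, in turn, rests only on the uniqueness (up to a positive constant) of a conformal map of a genus-zero Riemann surface prescribed by its divisor and by the normalization conditions, which is exactly the argument already used to define $\psi$ in Section 4.
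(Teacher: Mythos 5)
Your proposal is correct and follows essentially the same route as the paper: apply Theorem~\ref{teo5} to the reversed Nikishin system, identify the reversed Riemann surface with $\mathcal{R}$ under the sheet relabeling $j\mapsto m-j$, deduce $\psi^{\mathrm{rev}}_j = 1/\psi_{m-j}$ from the divisor and normalization, and then use the product identity~\eqref{ident} to conclude $F^{\mathrm{rev}}_k = F_{m-k+1}$. The only difference is that you spell out the sheet relabeling and the divisor argument a bit more explicitly than the paper does, which is a matter of exposition rather than substance.
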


{\bf Proof.} The existence of the limit follows directly from Theorem \ref{teo5}. To determine the expression of the limiting functions we need to construct the Riemann surface taking the intervals $\Delta_k$ in reversed order. But this is the same Riemann surface that we had before except that the sheets are in inverted order. Let $\widetilde{\psi}$ be the conformal map from $\mathcal{R}$ onto $\overline{\mathbb{C}}$ with a simple zero at $\infty^{(m)}$ and a simple pole at $\infty^{(0)}$. Let $ \widetilde{\psi}_k $ denote its branch on the sheet $\mathcal{R}_{m-k}$. We normalize $\widetilde{\psi}$ so that
\[
\prod_{k=0}^{m}\,|\widetilde{\psi}_{k}(\infty)|=1, \qquad \lim_{z\to \infty} z\widetilde{\psi}_m(z) > 0.
\]
This normalization is the equivalent of \eqref{mormaliz} for this situation.

\medskip

From the definition and the normalization it is easy to see that
\[ \widetilde{\psi} = 1/\psi, \qquad \widetilde{\psi}_k = 1/\psi_{m-k}, \qquad k=0,\ldots,m.
\]
According to Theorem \ref{teo5} the limit on the right hand of \eqref{pesratio*} should be $\widetilde{F}_k/\widetilde{F}_k'(\infty)$ where
\[ \widetilde{F}_k = \prod_{\nu=k}^{m} \widetilde{\psi}_\nu = \left(\prod_{\nu=k}^{m}  {\psi}_{m-\nu}\right)^{-1} = \prod_{\nu = m-k+1 }^m \psi_\nu = F_{m-k+1}
\]
which is what we needed to prove (recall that $\prod_{\nu=0}^{m}  {\psi}_{\nu}\equiv 1$). \hfill $\square$

\medskip

{\bf Acknowledgement:} The authors would like to thank Erwin Miña-Díaz for fruitful discussions related with the proof of Theorem \ref{velocidad}.

\bibliographystyle{amsplain}

\begin{thebibliography}{10}

\bibitem{AGR}
A.I. Aptekarev, G. L\'opez Lagomasino and I.A. Rocha. Ratio asymptotic of Hermite-Pad\'e orthogonal polynomials for Nikishin systems. Sb. Math. {\bf 196} (2005), 1089-1107.

\bibitem{DBG}
D. Barrios Rolan\'ia, B. de la Calle Ysern, and G. L\'opez Lagomasino. Ratio and relative asymptotics of polynomials orthogonal with respect to varying Denisov-type measures. J. Approx. Theory, {\bf 139} (2006), 223-256.

\bibitem{Bertola:CBOPs}
M.~Bertola, M.~Gekhtman, and J.~Szmigielski. Cauchy biorthogonal polynomials.
 J. Approx. Theory {\bf 162} (2010), 832-867.

\bibitem{BGS} M. Bertola, M. Gekhtman, and J. Szmigielski. Strong asymptotics for Cauchy biorthogonal polynomials with
application to the Cauchy two–matrix model. J. of Math. Physics {\bf 54} (2013), 043517.




\bibitem{bel} M. Bello Hern\'andez, G. L\'opez Lagomasino, and J.
Mínguez Ceniceros, Fourier-Pad\'e approximants for Angelesco
systems. Constr. Approx. {\bf 26} (2007), 339--359.


\bibitem{kn:B-G}
B. de la Calle Ysern and G. L\'{o}pez Lagomasino. Weak
Convergence of varying measures and Hermite-Pad\'{e} orthogonal
polynomials. Constr. Approx.  {\bf 15} (1999), 553-575.










\bibitem{FLLS} U. Fidalgo, A. L\'{o}pez, G. L\'opez Lagomasino, and V.N. Sorokin. Mixed type multiple orthogonal polynomials for two Nikishin systems. Constr. Approx. {\bf 32} (2010), 255-306.




\bibitem{gora} A. A. Gonchar and E. A. Rakhmanov. The equilibrium
measure and distribution of zeros of extremal polynomials. Math.
USSR Sb. {\bf 53} (1986), 119--130.

\bibitem{GRS} A.A. Gonchar, E.A. Rakhmanov, and V.N. Sorokin.  Hermite--Pad\'e approximants for systems of Markov--type functions. Sb. Math. {\bf 188} (1997), 33--58.

\bibitem{kn:Gui1}
G. Lopes [G. L\'opez Lagomasino]. On the asymptotic of the ratio
of orthogonal polynomials and convergence of multipoint Pad\'e
approximants.   Math. USSR Sb. {\bf 56} (1987), 207-220.

\bibitem{kn:Gui2}
G. Lopes [G. L\'opez Lagomasino]. Convergence of Pad\'e
approximants of Stieltjes type meromorphic functions and
comparative asymptotic of orthogonal polynomials.  Math. USSR Sb.
{\bf 64} (1989), 207-227.

\bibitem{LS} G. L\'opez Lagomasino and S. Medina Peralta.  On the convergence of type I Hermite-Pad\'e approximants.  Advances in Math. {\bf 273} (2015), 124-148.

\bibitem{LMS} G. L\'opez Lagomasino, S. Medina Peralta, and J. Szmigielski. Mixed type Hermite-Pad\'e approximation inspired by the
Degasperis-Procesi equation. arxiv 1805.02195

\bibitem{kn:Nev1}
P. Nevai. Weakly convergent sequences of functions and
orthogonal polynomials.  J. of Approx. Theory  {\bf 65} (1991),
322-340.

\bibitem{nik}  E.M. Nikishin.  On simultaneous Pad\'e approximants. Math. USSR Sb. {\bf 41} (1982), 409--425.

\bibitem{NiSo}  E.M. Nikishin and V.N. Sorokin.  Rational Approximations and Orthogonality. Transl. Math. Monogr. Vol. {\bf  92}. Amer. Math. Soc., Providence, R. I., 1991.




\bibitem{kn:Rak1}
E. A. Rakhmanov.  On the asymptotic of the ratio of orthogonal
polynomials.    Math. USSR Sb.  {\bf 32} (1977), 199-213.

\bibitem{kn:Rak2}
E. A. Rakhmanov. On the asymptotic of the ratio of orthogonal
polynomials II.     Math. USSR Sb.  {\bf 46} (1983), 105-117.

\bibitem{kn:Rak3}
E. A. Rakhmanov. On asymptotic properties of orthogonal
polynomials on the unit circle with weights not satisfying
Szeg\H{o}'s condition.    Math. USSR Sb.  {\bf 58} (1987),
149-167.

\bibitem{ST} E. B. Saff and V. Totik. Logarithmic Potentials with
External Fields. Series of Comprehensive Studies in Mathematics
Vol. {\bf 316}, Springer, New York, 1997.

\bibitem{stto}  H. Stahl and V. Totik.  General Orthogonal Polynomials.  Cambridge University Press. Encyclopedia Math. and Appl.  Vol. {\bf 43}, Cambridge, 1992.


\end{thebibliography}

\end{document}